\documentclass{article}
\usepackage[utf8]{inputenc}
\usepackage[margin=1in]{geometry}
\usepackage{tikz-cd,tikz,caption,appendix}
\usetikzlibrary{intersections,positioning,calc,patterns,decorations.markings,arrows}
\usepackage{amsmath,amssymb,amsthm,wrapfig,enumerate,mathtools,cases,tcolorbox}
\usepackage[pdfencoding=auto,hidelinks]{hyperref}

\DeclareMathOperator{\C}{\mathbb{C}}
\DeclareMathOperator{\R}{\mathbb{R}}
\DeclareMathOperator{\Z}{\mathbb{Z}}

\DeclareMathOperator{\CP}{\mathbb{CP}}

\DeclareMathOperator{\Hom}{Hom}

\DeclareMathOperator{\Ad}{Ad}

\DeclareMathOperator{\Reom}{Re\,\Omega}
\DeclareMathOperator{\Imom}{Im\,\Omega}
\DeclareMathOperator{\SU}{SU}
\DeclareMathOperator{\su}{\mathfrak{su}}

\newcommand{\G}{\SU(2)^2\times \textnormal{U}(1)}

\DeclareMathOperator{\Sym}{Sym}

\newcommand{\Aab}{A^{\text{ab}}}

\newcommand{\norm}[1]{\vert #1 \vert}

\newcommand{\Csev}{\mathbb{C}_7}

\setlength{\parindent}{0cm}

\title{$\G$-invariant $G_2$-instantons on the \\ AC limit of the $\mathbb{C}_7$ family}
\author{Karsten Matthies, Johannes Nordstr\"om, Matt Turner}
\date{ }

\begin{document}
\maketitle
\newtheorem{theorem}[equation]{Theorem}
\newtheorem{lemma}[equation]{Lemma}
\newtheorem{proposition}[equation]{Proposition}
\newtheorem*{maintheorem*}{Main Theorem}

\theoremstyle{definition}
\newtheorem{ex}[equation]{Example}
\newtheorem{remark}[equation]{Remark}
\numberwithin{equation}{section}

\begin{abstract}
We construct $\G$-invariant $G_2$-instantons on the asymptotically conical limit of the $\Csev$ family of $G_2$-metrics. The construction uses a dynamical systems approach involving perturbations of an abelian solution and a solution on the $G_2$-cone. From this we obtain a 1-parameter family of invariant instantons with gauge group $\SU(2)$ and bounded curvature. 
\end{abstract}

\frenchspacing
\setlength{\parskip}{.4cm}

\section{Introduction}

Let $(M,\varphi)$ be a $G_2$-manifold, that is, a Riemannian 7-manifold with a closed and coclosed 3-form $\varphi$ that induces a metric with holonomy group the exceptional Lie group $G_2$. Now, let $P\to M$ be a principal $G$-bundle, where $G$ is assumed to be a compact semisimple Lie group. A connection $A$ on $P$ is a $G_2$-instanton if \begin{align}\label{eq:insteq}F_A\wedge {*}\varphi=0\end{align} or equivalently, if \begin{align}\label{eq:ASDanalogue}F_A\wedge\varphi=-{*}F_A\end{align}
where $F_A$ is the curvature of the connection $A$ and $*$ denotes the hodge star operator with respect to the metric on $M$ induced by $\varphi$.

We note that \eqref{eq:ASDanalogue} is analogous to the notion of an anti-self-dual (ASD) connection $A$ on a 4-manifold, defined as satisfying $F_A=-{*}F_A$. The moduli space of such connections can be used to define numerical invariants for smooth 4-manifolds; this is known as Donaldson Theory. The analogy of $G_2$-instantons with ASD connections motivates study for using $G_2$-instantons to construct enumerative invariants of $G_2$-manifolds, an idea expressed by Donaldson and Thomas in \cite{DT98}, and developed further by Donaldson and Segal in \cite{DS11}. However, the moduli space of $G_2$-instantons on a $G_2$-manifold is less well-behaved than the ASD moduli space under deformations of the metric. Therefore, even after finding a compactification of the 0-dimensional moduli space, simply counting the points in this compactification is not expected to be an invariant of the $G_2$-structure. 

On the other hand, it is conjectured in \cite{DS11} that adding a counterterm, which would be defined by counting, with weights, $G_2$-instantons on a different bundle together with associative submanifolds and some sort of multiplicity, would provide an invariant under deformation of the $G_2$-structure. This is a very difficult problem, and hence it is natural to search for examples of $G_2$-instantons on all known constructions of $G_2$-manifolds. 

$G_2$-instantons have attracted interest from both the mathematics and theoretical physics communities. A physical motivation for understanding moduli spaces of $G_2$-manifolds is their application to M-theory, a branch of String Theory which unifies the various versions of Superstring Theory. In one model, the universe is postulated to have eleven dimensions, four of which consist of Minkowski space-time. The remaining seven take the form of a compact $G_2$-manifold with diameter of order the Planck length. 

The goal of this paper is to provide examples of $G_2$-instantons on an asymptotically conical (AC) manifold $M_{1,1}$, i.e. a manifold which is diffeomorphic to a $G_2$-cone outside of a compact set, and whose metric is in some sense asymptotic to a conical metric. This manifold forms the AC limit of the so-called $\Csev$ family of complete asymptotically locally conical (ALC) $G_2$-metrics. This family is just one of infinitely many constructed by Foscolo, Haskins and Nordstr\"om in \cite{FHN18}; we will refer to the collection of these families as the infinite extension of $\mathbb{C}_7$. Each family lies on a manifold $M_{m,n}$, for coprime positive integers $m$ and $n$. $M_{m,n}$ is the circle bundle over the canonical bundle on $\CP^1\times\CP^1$, whose restriction to the zero section has first Chern class $c_1=(m,-n)$ under the isomorphism between the second cohomology and $\Z\times\Z$.

The $\mathbb{C}_7$ family and its infinite extension also admit a cohomogeneity one action of the group $\G$. A cohomogeneity one manifold is a Riemannian manifold with an isometric Lie group action whose generic orbits, known as the principal orbits, have codimension one. With this high level of symmetry, we consider $\G$-invariant $G_2$-instantons on the AC manifolds $M_{m,n}$; this reduces the system of PDEs given by \eqref{eq:insteq} to a system of nonlinear ODEs, which are generally simpler to solve or understand qualitatively. When $m=n=1$, the equations simplify further, and so we focus here on that particular case.

The main result of this paper is the existence of a 1-parameter family of $G_2$-instantons with gauge group $\SU(2)$ on the AC manifold $M_{1,1}$. While these solutions are not explicit, the construction gives some details about their qualitative behaviour; indeed, the solutions have bounded curvature and are asymptotic to dilation-invariant solutions on the $G_2$-cone.

\subsubsection*{Plan of the Paper}

We begin in Section \ref{sec:prel} by setting out some of the preliminary material required for the rest of the paper. Initially, we describe what it means for a manifold to be AC or ALC; all manifolds considered in this paper will fit into one of these two categories. In addition, all the manifolds will have a cohomogeneity one group action. We describe such manifolds and give the construction of homogeneous bundles on their orbits. Finally, we rewrite the $G_2$-instanton equations as evolution equations on a cohomogeneity one manifold.

We describe the manifolds of the $\Csev$ family and its infinite extension in Section \ref{sec:C7}. Firstly, we set out the underlying topological manifold of each family, parameterised by coprime positive integers $m$ and $n$. This includes detailing their cohomogeneity one structure; the principal orbits are $(S^3\times S^3)/\Z_{2(m+n)}$ and the singular orbit is diffeomorphic to $S^2\times S^3$. We then recount the construction from \cite{FHN18} of a complete torsion-free $G_2$-structure on each $M_{m,n}$. 

In Section \ref{sec:inst}, we start by applying the general $G_2$-instanton evolution equation from Section \ref{sec:prel} to the specific case of $M_{m,n}$. This yields a general system of ODEs whose solutions are $\SU(2)^2$-invariant $G_2$-instantons. Before specialising this system to a specific connection on a given homogeneous bundle, we classify all possible $\SU(2)$-bundles on $M_{m,n}$ which admit $\G$-invariant connections. This is a standard process which applies Wang's Theorem (see Theorem \ref{thm:Wang}). Once we know the possible bundles, we can give an explicit form of the $G_2$-instanton equations as a system of ODEs on the space of principal orbits. 

The next step is to determine which solutions of this system extend smoothly over the singular orbit; this is outlined in Section \ref{sec:init}. This is also a standard technique in the study of cohomogeneity one geometry. After finding a family of local solutions close to the singular orbit, we apply Eschenburg-Wang's results in Appendix A to determine which of these solutions extend smoothly over the singular orbit. The remainder of the paper aims to construct global solutions which extend such local solutions to the whole manifold.

The main result is given in Section \ref{sec:sols}. There are some elementary solutions which can be described explicitly in terms of the functions defining the AC metric; we describe them in Section \ref{sec:triv}. These solutions are either invariant flat connections or are abelian instantons. The former connections are flat but are not equivalent to the trivial connection via invariant gauge transformations, i.e. gauge transformations which are constant on each orbit of the cohomogeneity one action. The latter are solutions which arise from a connection on a U(1)-subbundle, where the Lie algebra structure is trivial, i.e. all of the Lie brackets vanish. Hence, the equations simplify considerably and we find a 1-parameter family of invariant abelian instantons. Only one member of this family has bounded curvature and we call this special abelian instanton $\Aab$.

The main result of the paper is the following theorem; see Theorem \ref{thm:main} in Section \ref{sec:dyn} for a more precise statement.

\begin{maintheorem*}
Consider the AC $G_2$-manifold $M_{1,1}$ and let $\Aab$ be the invariant abelian $G_2$-instanton with bounded curvature. There is a 1-parameter family of $\G$-invariant $G_2$-instantons with full gauge group $\SU(2)$ and bounded curvature, which is a small perturbation of $\Aab$ near the singular orbit $S^2\times S^3$. These instantons are asymptotic to a dilation-invariant solution on the cone. 
\end{maintheorem*}

This result is proved via a dynamical systems approach. We consider a solution which takes the form of a heteroclinic orbit, i.e. a solution whose path in phase space joins two distinct fixed points. One of these fixed points corresponds to the connection on an $\SU(2)$-bundle over each principal orbit known as the canonical invariant connection. The abelian solution $\Aab$ which we construct is asymptotic to this connection. The argument proceeds by perturbing both solutions and proving we can flow from one to the other. The resulting solution is smooth and has initial data that forms a subset of the local solutions which extend smoothly over the singular orbit. On the other hand, the limit at the conical end is the same fixed point as the end of the heteroclinic orbit. The AC manifolds $M_{m,n}$ are asymptotic to the $G_2$-cone over a finite quotient of the nearly K\"ahler manifold $S^3\times S^3$. This fixed point corresponds to the so-called canonical Hermitian connection on this finite quotient.

\subsubsection*{Acknowledgements}

Special thanks to Lorenzo Foscolo, Jason Lotay and Jakob Stein for their helpful comments and discussions. Thanks also to the reviewers of this paper for their constructive comments. This work was funded by the EPSRC Studentship 2106787 and the Simons Collaboration on Special Holonomy in Geometry, Analysis and Physics (grant \#488631, Johannes Nordstr\"om).

\section{Preliminaries}\label{sec:prel}

\subsection{AC and ALC \texorpdfstring{$G_2$}{G2}-manifolds}

The manifolds $M_{m,n}$ of interest in this paper each admit a holonomy $G_2$ metric with asymptotically conical geometry. A non-compact $G_2$-manifold $(M,\varphi)$ is asymptotically conical (AC) if $M$ is complete with one end where the $G_2$-structure $\varphi$ is asymptotic to a conical $G_2$-structure $\varphi_C$ on $\R^+_t\times N^6$, for some nearly K\"ahler manifold $(N,\omega,\Omega)$. More precisely, $N$ has a non-degenerate 2-form $\omega$ and a complex volume form $\Omega$ satisfying 
\[d\omega=3\Reom, \hspace{1cm} d\Imom=-2\omega^2\]
and the conical $G_2$-structure is given by
\[\varphi_C=dt\wedge\omega+\Reom.\]
Then there is a compact subset $Y\subset M$, an $R>1$ and a diffeomorphism $f:(R,\infty)\times N\to M\backslash Y$ such that $f^*(\varphi)$ approaches $\varphi_C$ with rate $\nu<0$:
\[\norm{\nabla^j(f^*(\varphi)-\varphi_C)}=O(t^{\nu-j}), \hspace{1cm} \forall\,j\geq 0\]
on $(R,\infty)\times N$. $M$ has only one end due to the Cheeger-Gromoll Splitting Theorem.

In addition, the manifolds $M_{m,n}$ have a family of asymptotically locally conical (ALC) metrics; for context, we briefly describe metrics which have ALC geometry. Such manifolds, under certain decay conditions, are asymptotic outside a compact set to a model metric on a circle bundle over a 6-dimensional cone; we construct this model metric as follows. Let $\pi:\Sigma^6\to X^5$ be a circle bundle, $h$ a metric on $X$, $\theta$ a connection on $\Sigma$ and $\ell$ a positive real number. Then we can construct a U(1)-invariant $G_2$-structure $\varphi_{\infty}$ on the cone $(1,\infty)\times\Sigma$ with associated metric
\[g_{\varphi_{\infty}}=dt^2+\ell^2\theta^2+t^2\pi^*h.\]
This metric is a Riemannian submersion over the cone $(1,\infty)\times X$, whose circle fibres have constant length $2\pi \ell$. As we take $\ell\to\infty$, the asymptotic geometry transitions and we recover the AC metric described above. 

\subsection{Homogeneous Bundles on Cohomogeneity One Manifolds}
\label{sec:hombund}

Now let $M$ be any Riemannian manifold and $G$ be a Lie group. $M$ is a cohomogeneity one manifold if $G$ acts isometrically on $M$ with generic orbits of codimension one. In the following, we consider such manifolds $M$ which are complete irreducible and Ricci flat, with $G$ compact; then $M$ is non-compact with one end and so $M/G$ is a half line $[0,\infty)$. We say that the orbits over points in $(0,\infty)$ are principal orbits, while the orbit corresponding to 0 is called the singular orbit. 

We can encode the cohomogeneity one structure of $M$ in its group diagram: we write $K_0\subset K\subset G$, where $K_0$ and $K$ are the stabilisers of the $G$-action on $M$ restricted to the principal orbits and singular orbit respectively. There is a $K$-representation $V$ of dimension $\dim(K)-\dim(K_0)+1$ such that $K$ acts transitively on the unit sphere with stabiliser $K_0$; then $M=G\times_KV$.

A principal $H$-bundle $P$ on a homogeneous manifold $G/K$ is called $G$-homogeneous if the action of $G$ on $G/K$ lifts to a $G$-action on $P$ which commutes with the action of $H$. Such bundles are determined by their isotropy homomorphism, which we now construct, as in \cite[p105]{KN63}. Let $u_0$ be an arbitrary point of $P$ over $x_0\in G/K$. The group $K$ is exactly the isotropy subgroup of the translation action of $G$ on $G/K$. Let $k\in K$; then $ku_0$ is a point in $P$, which lies in the same fibre as $u_0$. Thus, we can write $ku_0=u_0h$ for some $h\in H$. We define the isotropy homomorphism $\lambda:K\to H$ by $\lambda(k) = h$; it is proved in Section II.11 of \cite{KN63} that this is indeed a homomorphism.

Conversely, given a homomorphism $\lambda:K\to H$, the associated $H$-bundle 
\begin{equation}
\label{eq:lambundle}
    P_{\lambda}=G\times_{(K,\lambda)}H
\end{equation}
is a $G$-homogeneous $H$-bundle on $G/K$ whose isotropy representation is $\lambda$.
The Lie algebra $\mathfrak{g}$ of $G$ has an $\Ad_K$-invariant splitting $\mathfrak{g}=\mathfrak{k}\oplus\mathfrak{m}$. Then the canonical invariant connection on the bundle $G\to G/K$ is the invariant connection whose horizontal space at the identity in $G$ is $\mathfrak{m}$. It induces a corresponding canonical invariant connection on any $P_{\lambda}$, as in \cite{T09}, which is determined by the left-invariant translation of $d\lambda\oplus0:\mathfrak{k}\oplus\mathfrak{m}\to\mathfrak{h}$. We now refer to the following theorem from \cite{W58}, for a method of parameterising invariant connections on $P_{\lambda}$.

\begin{theorem}[Theorem 1, \cite{W58}]
\label{thm:Wang}
There is a 1-1 correspondence between $G$-invariant connections on $P_{\lambda}$ and morphisms of $K$-representations \begin{align}\label{eq:Lambda}\Lambda:(\mathfrak{m},\Ad)\to(\mathfrak{h},\Ad\circ\lambda).\end{align}
\end{theorem}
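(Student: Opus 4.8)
The plan is to prove Wang's theorem by unwinding what a $G$-invariant connection on $P_\lambda = G\times_{(K,\lambda)} H$ must look like in terms of its connection 1-form, evaluated along the fibre over the base point $eK$. Recall that a principal connection on $P_\lambda$ is an $\mathfrak h$-valued 1-form $\alpha$ on $P_\lambda$ that is $\Ad_H$-equivariant and restricts to the Maurer--Cartan form on the vertical subbundle. Since $P_\lambda$ is built from $G$, points of $P_\lambda$ over $eK$ are of the form $[e,h]$, and it suffices (by $G$-invariance and $H$-equivariance) to understand $\alpha$ at the single point $[e,e]$. The tangent space there decomposes, using the splitting $\mathfrak g = \mathfrak k\oplus\mathfrak m$ together with $\mathfrak h$, into the image of $\mathfrak k$, the image of $\mathfrak m$, and the vertical $\mathfrak h$. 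The connection is forced on the vertical part (Maurer--Cartan), and the behaviour on the $\mathfrak k$-part is also forced: a vector coming from $X\in\mathfrak k$ maps into $P_\lambda$ via the relation $[k, h] = [e, \lambda(k)h]$, so differentiating shows $\alpha$ sends it to $d\lambda(X)$. Hence all the freedom in $\alpha$ is captured by a linear map $\Lambda:\mathfrak m\to\mathfrak h$, namely the value of $\alpha$ on the image of $\mathfrak m$ at $[e,e]$; this is exactly the deviation of $\alpha$ from the canonical invariant connection described in the excerpt.

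**Next I would** check that $G$-invariance of the connection is equivalent to the $K$-equivariance condition on $\Lambda$. The point $[e,e]$ is fixed by the $K$-action up to the $H$-action by $\lambda(K)$: for $k\in K$ we have $k\cdot[e,e] = [k,e] = [e,\lambda(k)]= [e,e]\cdot\lambda(k)$. So requiring $\alpha$ to be invariant under all of $G$ — once invariance under the $G$-translation part is used to reduce to the base point — amounts to requiring that $\alpha$ at $[e,e]$ be unchanged under the combined action of $k\in K$ on the source (which acts on $\mathfrak m$ by $\Ad_k$) and $\lambda(k)\in H$ on the target (which acts on $\mathfrak h$ by $\Ad_{\lambda(k)}$). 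That is precisely the statement that $\Lambda:(\mathfrak m,\Ad)\to(\mathfrak h,\Ad\circ\lambda)$ is a morphism of $K$-representations. Conversely, given any such $\Lambda$, one defines $\alpha$ on all of $P_\lambda$ by $G$-translating and $H$-equivariance from its prescribed value at $[e,e]$ (Maurer--Cartan on $\mathfrak h$, $d\lambda$ on $\mathfrak k$, $\Lambda$ on $\mathfrak m$); the $K$-equivariance of $\Lambda$ is exactly what is needed for this to be well defined (independent of the representative $[e,h]$ and consistent on overlaps), and one verifies it is a genuine principal connection. The two constructions are mutually inverse, giving the claimed bijection.

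**The main obstacle** I expect is the well-definedness/consistency check in the converse direction: because $P_\lambda$ is a quotient $(G\times H)/K$, writing down $\alpha$ by a formula at representatives and checking it descends requires carefully tracking how the $K$-action intertwines the $\Ad$-action on $\mathfrak m$ with the $\Ad\circ\lambda$-action on $\mathfrak h$, and confirming that the Maurer--Cartan + $d\lambda$ + $\Lambda$ prescription glues. Everything else — the equivariance and Maurer--Cartan axioms for a principal connection — is then a routine verification. (This is all as in \cite{W58} and the exposition in \cite{KN63}; I would cite those for the verifications rather than reproduce them in full.)
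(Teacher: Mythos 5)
Your proposal is correct and follows the standard argument from Wang's original paper and its exposition in Kobayashi--Nomizu (Section II.11), which is exactly what this paper cites; the paper itself does not reprove the theorem. One small point of phrasing: the ``decomposition'' of $T_{[e,e]}P_\lambda$ into the images of $\mathfrak k$, $\mathfrak m$ and the vertical $\mathfrak h$ is not a direct sum --- the fundamental vector field of $X\in\mathfrak k$ at $[e,e]$ already lies in the vertical subspace (it equals the vertical vector for $d\lambda(X)$), so the tangent space is really $\mathfrak m\oplus\mathfrak h$; but since you then correctly use the Maurer--Cartan condition to pin the value there to $d\lambda(X)$, this does not affect the argument, and the reduction of $G$-invariance to the $K$-equivariance of $\Lambda$ via the stabiliser $\{(k,\lambda(k))\}\subset G\times H$ of $[e,e]$ is exactly right.
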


\begin{remark}
A $G$-invariant connection on $P_{\lambda}$ can be written as a 1-form $\omega\in\Omega^1(P_{\lambda},\mathfrak{h})$ (\cite[Section 2.3]{T09}), namely
\[\omega_{[g,h]}=\Ad_{h^{-1}}\circ\, (d\lambda+\Lambda)\circ(\Theta_G)_g+(\Theta_H')_h\]
where $\Theta_G$ is the left-invariant Maurer-Cartan form on $G$ and $\Theta_H'$ is the right-invariant Maurer-Cartan form on $H$. Then the difference between any invariant connection and the canonical invariant connection is determined by the morphism $\Lambda$, and the horizontal space of such a connection is determined by $\ker\Lambda$.
\end{remark}

Consider a cohomogeneity one manifold $M=G\times_KV$, with principal orbits $G/K_0$ and whose singular orbit is $G/K$.  Let $\lambda_s:K\to H$ be a group homomorphism and let {$\lambda_p:K_0\to H$} be the restriction of $\lambda$ to $K_0$.  Define \[P=G\times_K(V\times H),\]
a principal $H$-bundle over $M$ since the right action of $H$ on $P$ is free and $P/H=M$. The restriction to each principal orbit is $P_p:=P_{\lambda_p}$, the associated bundle to $\lambda_p$ given by (\ref{eq:lambundle}), and whose restriction to the singular orbit is $P_s:=P_{\lambda_s}$.

\subsection{\texorpdfstring{$G_2$}{G2}-instanton evolution equations}
\label{sec:insteveq}

We now consider the manifold $M=(0,\infty)\times N$, where $N$ is a 6-manifold admitting a 1-parameter family of half-flat $\SU(3)$-structures $(\omega,\Omega)$. An $\SU(3)$-structure $(\omega,\Omega)$ on a 6-manifold is a non-degenerate 2-form $\omega$ and a complex volume form $\Omega$ satisfying
\[\omega\wedge\Reom=0, \hspace{1cm} \frac{1}{6}\omega^3=\frac{1}{4}\Reom\wedge\Imom.\]
Such a structure is called half-flat if it also satisfies
\[d\omega\wedge\omega=d\Reom=0.\]
Then $M$ has a $G_2$-structure $\varphi=dt\wedge\omega+\Reom$ which is torsion-free if it satisfies the Hitchin flow, namely
\[\partial_t\Reom=d\omega, \hspace{1cm} \partial_t(\omega^2)=-2d\Imom.\]
The induced metric has the form $g=dt^2+g_t$ where $g_t$ is a 1-parameter family of metrics on $N$.

Let $P$ be a principal $G$-bundle on $M$, then $P$ is a pull-back of a bundle on $N$. We work in temporal gauge, so we assume that a connection on $P$ is of the form $A=\alpha(t)$, where $\alpha(t)$ is a 1-parameter family of connections on $P$. Then the curvature of $A$ is given by \[F_A=dt\wedge \dot{\alpha}+F_{\alpha}(t),\] where $F_{\alpha}(t)$ is the curvature of the connection $\alpha(t)$ on $P$ over $N$. In this setting, the $G_2$-instanton equation for the connection $A$ becomes an evolution equation for $\alpha(t)$, given by
\begin{align}\label{eqn:G2eqna}\dot{\alpha}\wedge\frac{1}{2}\omega^2-F_{\alpha}\wedge\Imom=0,\hspace{1cm} F_{\alpha}\wedge\frac{1}{2}\omega^2=0.\end{align}
The following lemma provides a useful reformulation of the above evolution equations.

\begin{lemma}[Lemma 1, \cite{LO18a}]
Let $M=(0,\infty)\times N$ be equipped with a $G_2$-structure $\varphi$ as above, satisfying $\omega\wedge d\omega=0$ and $\omega\wedge\partial_t\omega=-d\Imom$, or equivalently $d*\varphi=0$. Then $G_2$-instantons are in 1-1 correspondence with 1-parameter families of connections $\{\alpha(t)\}_{t\in I_t}$ solving the evolution equation
\begin{align}\label{eqn:G2eveq}
J_t\dot{\alpha}=-*_t(F_{\alpha}\wedge\Imom)\end{align}
subject to the constraint $\Lambda_t F_{\alpha}=0$, where $\Lambda_t$ denotes the metric dual of the operation of wedging with $\omega(t)$. Here, $*_t$ is the Hodge star operator associated to the $\SU(3)$-structure $(\omega(t),\Omega(t))$. Moreover, this constraint is compatible with the evolution: if it holds for some $t$, then it holds for all $t \in (0,\infty)$.
\end{lemma}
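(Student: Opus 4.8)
\emph{Proof strategy.} I would prove this by treating the two equations in \eqref{eqn:G2eqna} in turn, working pointwise on a fixed time-slice. Two pieces of linear algebra of an $\SU(3)$-structure $(\omega,\Omega)$ are needed: for any $2$-form $\beta$ on $N$ one has $\beta\wedge\tfrac12\omega^2 = (\Lambda_t\beta)\,\tfrac16\omega^3$, and for any $1$-form $a$ one has $a\wedge\tfrac12\omega^2 = *_t(J_t a)$ up to a universal sign fixed by the orientation convention. The first identity identifies the second equation in \eqref{eqn:G2eqna}, namely $F_\alpha\wedge\tfrac12\omega^2=0$, with the constraint $\Lambda_t F_\alpha=0$, since $\omega^3$ is nowhere zero.

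For the first equation, apply the $1$-form identity with $a=\dot\alpha$ to rewrite $\dot\alpha\wedge\tfrac12\omega^2 - F_\alpha\wedge\Imom=0$ as $*_t(J_t\dot\alpha) = F_\alpha\wedge\Imom$. Applying $*_t$ and using that $*_t^2=-1$ on $1$-forms (equivalently $5$-forms) of an oriented Riemannian $6$-manifold yields $J_t\dot\alpha = -*_t(F_\alpha\wedge\Imom)$, which is \eqref{eqn:G2eveq}; the converse is obtained by applying $*_t$ once more and then the identity, using invertibility of $J_t$. Hence \eqref{eqn:G2eqna} is equivalent to \eqref{eqn:G2eveq} together with $\Lambda_t F_\alpha=0$, and since \eqref{eqn:G2eqna} is (as recalled above) exactly the $G_2$-instanton condition in temporal gauge, this already gives the claimed $1$-$1$ correspondence.

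For the final assertion I would prove the stronger statement that $F_\alpha\wedge\tfrac12\omega^2$ is independent of $t$ along any solution of \eqref{eqn:G2eveq}. Using $\partial_t F_\alpha = d_\alpha\dot\alpha$ (where $d_\alpha$ denotes the exterior covariant derivative on $N$) and the hypothesis $\omega\wedge\partial_t\omega=-d\Imom$, the time-derivative is
\[\partial_t\!\left(F_\alpha\wedge\tfrac12\omega^2\right) = (d_\alpha\dot\alpha)\wedge\tfrac12\omega^2 + F_\alpha\wedge(\omega\wedge\partial_t\omega) = (d_\alpha\dot\alpha)\wedge\tfrac12\omega^2 - F_\alpha\wedge d\Imom.\]
Since the hypothesis $\omega\wedge d\omega=0$ is equivalent to $d(\tfrac12\omega^2)=0$, integration by parts gives $(d_\alpha\dot\alpha)\wedge\tfrac12\omega^2 = d_\alpha(\dot\alpha\wedge\tfrac12\omega^2)$; substituting $\dot\alpha\wedge\tfrac12\omega^2 = F_\alpha\wedge\Imom$ from \eqref{eqn:G2eveq} and using the Bianchi identity $d_\alpha F_\alpha=0$ yields $(d_\alpha\dot\alpha)\wedge\tfrac12\omega^2 = d_\alpha(F_\alpha\wedge\Imom) = F_\alpha\wedge d\Imom$, so the two terms cancel and $\partial_t(F_\alpha\wedge\tfrac12\omega^2)=0$. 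Therefore, if the constraint holds at a single time $t_0$ then $F_\alpha\wedge\tfrac12\omega^2$ vanishes there, hence for all $t$, and nondegeneracy of $\omega$ forces $\Lambda_t F_\alpha=0$ throughout $(0,\infty)$.

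The computations involved are short, so I do not expect a genuine obstacle; the two points needing care are the linear-algebra identity $a\wedge\tfrac12\omega^2=\pm *_t(J_t a)$ together with the sign $*_t^2=-1$ in the relevant degrees, and, for the propagation of the constraint, the observation that the cancellation uses exactly the two structure equations packaged as $d{*}\varphi=0$ — namely $d(\tfrac12\omega^2)=0$ for the integration-by-parts step and $\omega\wedge\partial_t\omega=-d\Imom$ to cancel the $F_\alpha\wedge d\Imom$ term.
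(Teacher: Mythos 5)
Your proof is correct. Note, however, that the paper does not contain a proof of this statement: the lemma is quoted verbatim from Lotay--Oliveira \cite{LO18a} (their Lemma 1), so there is no internal proof to compare against. Your argument is the standard one and matches the structure of the source: the Lefschetz identity $\beta\wedge\tfrac12\omega^2 = (\Lambda_t\beta)\tfrac16\omega^3$ for $2$-forms converts the second equation of \eqref{eqn:G2eqna} into $\Lambda_t F_\alpha=0$; the $1$-form identity $a\wedge\tfrac12\omega^2 = *_t(J_t a)$ (consistent with the sign convention $J_t e_i = -{*}(e_i\wedge\tfrac12\omega^2)$ used later in Section \ref{sec:eveqmn}, after applying $*_t$ and $*_t^2 = -1$ on $5$-forms) converts the first into \eqref{eqn:G2eveq}; and the propagation of the constraint follows from the computation $\partial_t\bigl(F_\alpha\wedge\tfrac12\omega^2\bigr) = d_\alpha\bigl(\dot\alpha\wedge\tfrac12\omega^2\bigr) - F_\alpha\wedge d\Imom = d_\alpha\bigl(F_\alpha\wedge\Imom\bigr) - F_\alpha\wedge d\Imom = 0$, which correctly isolates exactly where the two structure equations $d\bigl(\tfrac12\omega^2\bigr)=0$ and $\omega\wedge\partial_t\omega=-d\Imom$ and the Bianchi identity enter. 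The only thing to tidy up in a final write-up would be pinning down the sign in the $1$-form identity rather than leaving it as ``a universal sign,'' but as you observe this is purely a bookkeeping matter fixed by the orientation convention.
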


When deriving the $G_2$-instanton equations explicitly as an ODE system, it will be more convenient to use the formulation \eqref{eqn:G2eveq}. Such a derivation for the $G_2$-manifolds of interest in this paper is given in Section \ref{sec:eveqmn}.

\section{The \texorpdfstring{$\mathbb{C}_7$}{C7}-family and its infinite extension}\label{sec:C7}

We now give a comprehensive description of the $G_2$-structures on the manifolds $M_{m,n}$, constructed in \cite{FHN18}. The $\mathbb{C}_7$-family is given by the special case where $m=n=1$. The results of this paper focus on $M_{1,1}$ but we start by considering the general case as the setup is no harder to work with. We start by considering invariant half-flat structures on the principal orbits; we see that this structure, together with an extra U(1)-symmetry, give conditions on the $\SU(3)$-structure on each orbit. By exploiting the cohomogeneity one property of $M_{m,n}$, we see that the half-flat equations reduce to a system of ODEs. These simplifications were used in \cite{FHN18} to find a complete torsion-free AC $G_2$-structure on each $M_{m,n}$, for positive coprime $m$ and $n$.

\subsection{The manifolds \texorpdfstring{$M_{m,n}$}{Mmn}}\label{sec:Mmndef}

Fix a basis of left-invariant 1-forms $e_1,e_2,e_3, e_1',e_2',e_3'$ on $\SU(2)\times\SU(2)$ with the property that
\[de_i = -e_j\wedge e_k,\hspace{1cm} de_i'=-e_j'\wedge e_k'\]
for $(i,j,k)$ any cyclic permutation of $(1,2,3)$. Denote the dual vector fields by $E_i,E_i'$; they must satisfy $[E_i,E_j]=E_k$. Consider the maximal torus $T^2 \subset \SU(2)\times\SU(2)$; we may assume $T^2$ is generated by $E_3$ and $E_3'$.

For any $m,n\in\Z$, define subgroups $K_{m,n}$ of $T^2$ by
\[K_{m,n}=\{(e^{i\theta_1},e^{i\theta_2})\in T^2\ \vert\  e^{i(m\theta_1+n\theta_2)}=1\}.\]
We have $K_{m,n}\cong \textup{U}(1)\times \Z_{\gcd(m,n)}$ via \[\textup{U}(1)\times \Z_{\gcd(m,n)}\to K_{m,n}:(e^{i\theta},\zeta)\mapsto (e^{ik\theta}\zeta^r,e^{-ih\theta}\zeta^s),\] where $m=\gcd(m,n)h$, $n=\gcd(m,n)k$ and $rh+sk=1$. In particular, if $m,n$ are coprime, then $K_{m,n}\cong \textup{U}(1)$ via the map \[\textup{U}(1)\to K_{m,n}:e^{i\theta}\mapsto (e^{in\theta},e^{-im\theta}).\] In this case, we can fix integers $r,s\in\Z$ with $mr+ns=1$. Then, by the natural embedding of $K_{m,n}$ in $\SU(2)^2$, the left-invariant vector field given by $nE_3-mE_3'$ generates the $K_{m,n}$-action on $\SU(2)^2$.

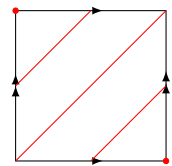
\begin{wrapfigure}{r}{0.2\textwidth}
\centering
\begin{tikzpicture}
\draw[postaction={decorate},decoration={
    markings,
    mark=at position .145 with {\arrow{latex}},
    mark=at position .380 with {\arrow{latex}},
    mark=at position .400 with {\arrow{latex}},
    mark=at position .605 with {\arrowreversed{latex}},
    mark=at position .855 with {\arrowreversed{latex}},
    mark=at position .875 with {\arrowreversed{latex}}
    }
  ]
    (0,-1) -- +(2,0) -- +(2,2) -- +(0,2) -- cycle;
    \draw[red] (0,-1) -- (2,1);
    \draw[red] (1,-1) -- (2,0);
    \draw[red] (0,0) -- (1,1);
    \filldraw[red] (0,1) circle (1pt);
    \filldraw[red] (2,-1) circle (1pt);
\end{tikzpicture}
\captionsetup{justification=centering}
\caption{\textcolor{red}{$K_{2,-2}$} \newline embedded in $T^2$}
\end{wrapfigure}

Take the canonical line bundle $B=K_{\CP^1\times\CP^1}$ over $\CP^1\times\CP^1$; this bundle is of interest since it is an example of an AC Calabi-Yau 3-fold which is asymptotic to a $\mathbb{Z}_2$-quotient of the conifold $C(\Sigma)$, where $\Sigma=\SU(2)^2/\Delta\text{U}(1)$ is endowed with its $\SU(2)\times\SU(2)$-invariant Sasaki-Einstein structure. There is a natural action of $\SU(2)$ on the line bundle $\mathcal{O}(-1)\to\CP^1$ which can be extended to an action of $\SU(2)^2$ on $K_{\CP^1\times\CP^1}$. Its group diagram is\[K_{2,-2}\subset T^2\subset\SU(2)\times\SU(2)\]
and the tangent cone at infinity is a free $\Z_2$-quotient of the conifold. Here, the choice of stabiliser $K_{2,-2}$ was made in \cite{FHN18} to be consistent with the physics literature.

For coprime positive integers $m$ and $n$, we now consider a non-trivial circle bundle $M_{m,n}$ over $B$, whose restriction to the zero section has first Chern class $c_1=m[\omega]-n[\omega']$, where $[\omega]$ and $[\omega']$ are generators of the second cohomology of the two $\CP^1$ factors. The natural action of each $\SU(2)$ factor on $\mathcal{O}(-1)$ gives a natural action of $\SU(2)^2$ on the circle bundle $M_{m,n}$. The total space is a simply connected cohomogeneity one 7-manifold, encoded in the group diagram
\[K_{m,n}\cap K_{2,-2}\subset K_{m,n}\subset \SU(2)\times\SU(2).\]

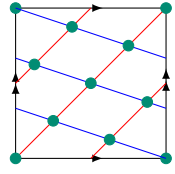
\begin{wrapfigure}{r}{0.2\textwidth}
\centering
\begin{tikzpicture}
\draw[postaction={decorate},decoration={
    markings,
    mark=at position .145 with {\arrow{latex}},
    mark=at position .380 with {\arrow{latex}},
    mark=at position .400 with {\arrow{latex}},
    mark=at position .605 with {\arrowreversed{latex}},
    mark=at position .855 with {\arrowreversed{latex}},
    mark=at position .875 with {\arrowreversed{latex}}
    }
  ]
    (0,-1) -- +(2,0) -- +(2,2) -- +(0,2) -- cycle;
    \draw[red] (0,-1) -- (2,1);
    \draw[red] (1,-1) -- (2,0);
    \draw[red] (0,0) -- (1,1);
    \filldraw[green!55!blue] (0,1) circle (2pt);
    \filldraw[green!55!blue] (2,-1) circle (2pt);
    
    \draw[blue] (0,-1/3) -- (2,-1);
    \draw[blue] (0,1/3) -- (2,-1/3);
    \draw[blue] (0,1) -- (2,1/3);
    \filldraw[green!55!blue] (2,1) circle (2pt);
    \filldraw[green!55!blue] (0,-1) circle (2pt);
    
    \filldraw[green!55!blue] (1,0) circle (2pt);
    \filldraw[green!55!blue] (1/2,-1/2) circle (2pt);
    \filldraw[green!55!blue] (3/2,1/2) circle (2pt);
    \filldraw[green!55!blue] (1/4,1/4) circle (2pt);
    \filldraw[green!55!blue] (7/4,-1/4) circle (2pt);
    \filldraw[green!55!blue] (3/4,3/4) circle (2pt);
    \filldraw[green!55!blue] (5/4,-3/4) circle (2pt);
\end{tikzpicture}
\captionsetup{justification=centering}
\caption{\textcolor{green!55!blue}{$K_{1,3}\cap K_{2,-2}$} \newline embedded in $T^2$}
\end{wrapfigure}

Note that $K_{m,n}\cap K_{2,-2}$ is isomorphic to $\Z_{2\vert m+n\vert}$, where this cyclic group is embedded in the maximal torus $T^2\subset\SU(2)\times\SU(2)$ via $\zeta\mapsto (\zeta^n,\zeta^{-m})$, for a primitive $2\vert{m+n}\vert$th root $\zeta$. See Figure 2 for an example, when $m=1$ and $n=3$: the four corner points are identified, leaving $2\vert m+n \vert=8$ distinct points.

Furthermore, the metrics constructed in Section \ref{sec:ab} will have a cohomogeneity one action of $\G$, where the additional U(1) acts on the circle fibres of $M_{m,n}$ by multiplication. It is easier to describe the manifolds $M_{m,n}$ via an $\SU(2)\times\SU(2)$ cohomogeneity one group action, but the requirement of the additional U(1)-invariance simplifies the equations for an invariant torsion-free $G_2$-structure.
\newline

\subsection{Construction of a complete AC torsion-free \texorpdfstring{$G_2$}{G2}-structure on \texorpdfstring{$M_{m,n}$}{Mmn}}
\label{sec:ab}

The main result of \cite{FHN18} that is of interest in our case is the existence of complete torsion-free AC $G_2$-metrics on the manifolds $M_{m,n}$. We start by considering $\G$-invariant solutions which exist in a neighbourhood of the singular orbit $Q$ of $M_{m,n}$, where $Q=(SU(2)\times\SU(2))/K_{m,n}\cong S^2\times S^3$. In the following set of propositions, we set out the argument of Foscolo, Haskins and Nordstr\"om that proves Theorem \ref{thm:7.1}. The theorem states that for each $m,n\in\Z$ and for a fixed $r_0>0$, there exists a $\beta>0$ such that the $G_2$-structure defined by
\begin{equation}\label{eq:G2beta}\varphi_{\beta}=-m^2r_0^3e_1\wedge e_2\wedge e_3 +n^2r_0^3e_1'\wedge e_2'\wedge e_3'+d(a(e_1\wedge e_1'+e_2\wedge e_2')+be_3\wedge e_3'),\end{equation}
for functions $a,b:[0,\infty)_t\to\R$ depending on $\beta$, is a complete torsion-free AC $G_2$-structure on $M_{m,n}$. Here, $t$ is the arc-length parameter along a geodesic meeting all principal orbits orthogonally.

The $G_2$-structure $\varphi_{\beta}$ on $M_{m,n}$ induces a metric $g=dt^2+g_t$. From \cite{MS12}, we have the following explicit formula for $g_t$:
\begin{align}
\label{eqn:metric}
g_t & =\frac{a(b+m^2r_0^3)}{\dot{a}\dot{b}}(e_1\otimes e_1 + e_2\otimes e_2) + \frac{a^2+bm^2r_0^3}{\dot{a}^2}e_3\otimes e_3 \\
 & \hspace{1cm}+ \frac{a(b+n^2r_0^3)}{\dot{a}\dot{b}}(e_1'\otimes e_1' + e_2'\otimes e_2') + \frac{a^2+bn^2r_0^3}{\dot{a}^2}e_3'\otimes e_3'\nonumber\\
&\hspace{1cm} - \frac{b^2-m^2n^2r_0^6}{2\dot{a}\dot{b}}(e_1\otimes e_1' + e_1'\otimes e_1 + e_2\otimes e_2' + e_2'\otimes e_2) + \frac{b^2-2a^2+m^2n^2r_0^6}{2\dot{a}^2}(e_3\otimes e_3' + e_3'\otimes e_3).\nonumber
\end{align}

\begin{proposition}[{\cite[{Proposition 4.5 (iii) and (iv)}]{FHN18}}]
\label{prop:4.5}
Fix coprime positive $m,n\in\Z$. $\G$-invariant $G_2$-structures on $M_{m,n}$ have the form $\varphi_{\beta}$ given in \eqref{eq:G2beta} for some functions $a,b:[0,\infty)_t\to\R$.
There exists a 2-parameter family of torsion-free $G_2$-structures $\varphi$ of this form defined in a neighbourhood of the singular orbit. The family is parameterised by $r_0\in\R$ and $\beta>0$ with
\begin{align*}
    a & = r_0^2\beta t + \frac{(m+n)\beta^3-m^{\frac{5}{2}}n^{\frac{5}{2}}}{12\sqrt{mn}\beta^3}t^3 + O(t^5),\\
    b & = mnr_0^3 +\frac{\sqrt{mn}(m+n)r_0}{2\beta}t^2+\frac{m+n}{96r_0\beta^5}\left(7-4(m+n)\beta^3\right)t^4+O(t^6).
\end{align*}
\end{proposition}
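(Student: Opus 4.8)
The plan is to reduce the torsion-free condition on $\varphi_\beta$ to the Hitchin flow ODEs for the two functions $a$ and $b$, and then to solve the resulting system by a power-series/Picard-iteration argument near $t=0$, where the constraint of smooth extension over the singular orbit $Q\cong S^2\times S^3$ fixes the leading coefficients and leaves exactly the two free parameters $r_0$ and $\beta$. First I would note that the statement that every $\G$-invariant $G_2$-structure on $M_{m,n}$ has the form \eqref{eq:G2beta} is essentially a representation-theoretic count: the space of $\G$-invariant $2$-forms on a principal orbit $(SU(2)^2)/K_{m,n}$ is spanned by $e_1\wedge e_1'+e_2\wedge e_2'$ and $e_3\wedge e_3'$ (the extra $\mathrm{U}(1)$-invariance kills the other candidates such as $e_1\wedge e_2$, $e_1'\wedge e_2'$, $e_3\wedge e_3'$ mixed terms, etc.), and the invariant $3$-form on the singular orbit is spanned by $e_1\wedge e_2\wedge e_3$ and $e_1'\wedge e_2'\wedge e_3'$; writing $\varphi = dt\wedge\omega + \mathrm{Re}\,\Omega$ and imposing $d\varphi=0$ forces $\omega = d(\text{primitive})$ up to the fixed closed terms, which is exactly \eqref{eq:G2beta} once one normalises the coefficients of the $e_i\wedge e_j\wedge e_k$ terms to $-m^2r_0^3$ and $n^2r_0^3$ using the freedom in $r_0$.

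Next I would extract the ODE system: plugging \eqref{eq:G2beta} into the coclosedness equation $d{*}\varphi=0$ (equivalently, into the Hitchin flow $\partial_t\mathrm{Re}\,\Omega = d\omega$, $\partial_t(\omega^2) = -2\,d\,\mathrm{Im}\,\Omega$) and using the structure equations $de_i=-e_j\wedge e_k$ gives a coupled system of first-order ODEs in $a,b$ whose coefficients are rational in $a,b,\dot a,\dot b$ — one can read off the shape from the metric formula \eqref{eqn:metric}, since the $\SU(3)$-structure is determined algebraically by $a,b$ and their $t$-derivatives. This system has a regular singular point at $t=0$. Then I would run the standard Eschenburg--Wang-type analysis (the paper defers this to its Appendix A and Section~\ref{sec:init}): a $\G$-invariant $G_2$-structure extends smoothly over $Q$ iff $a$ and $b$ admit convergent expansions in $t$ with $a$ \emph{odd} and $b$ \emph{even} (this parity is dictated by how the collapsing directions $e_1,e_1',e_2,e_2'$ degenerate at the singular orbit, whose normal sphere is the $S^2$-factor), with prescribed lowest-order terms $a = r_0^2\beta\, t + O(t^3)$ and $b = mnr_0^3 + O(t^2)$. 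Substituting these ansätze into the ODE system and matching powers of $t$ recursively determines every higher coefficient uniquely in terms of $r_0$ and $\beta$; carrying this out to order $t^5$ for $a$ and $t^6$ for $b$ yields precisely the displayed formulas (the $t^3$-coefficient of $a$ and the $t^2$- and $t^4$-coefficients of $b$ come out of the first two nontrivial matching steps). Convergence of the series — hence existence of an actual solution on a neighbourhood of $Q$, not just a formal one — follows from a contraction-mapping argument on weighted spaces adapted to the indicial roots, which is exactly the content of the Eschenburg--Wang machinery being invoked; positivity of $\beta$ and the resulting non-degeneracy of $\varphi_\beta$ near $t=0$ are then immediate from the leading terms.

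The main obstacle is bookkeeping rather than conceptual: correctly deriving the rational ODE system from \eqref{eq:G2beta} via $d{*}\varphi=0$ and keeping track of signs and the $m,n$-dependence (note the asymmetry $m^2r_0^3$ vs.\ $n^2r_0^3$ in \eqref{eq:G2beta} and the cross-terms in \eqref{eqn:metric}) is where errors creep in, and one must be careful that the parity constraints one imposes are genuinely the smoothness conditions at $Q$ and not an over- or under-counting of free parameters — this is precisely why the argument is organised to lean on Appendix A of the paper. The actual recursion for the coefficients is then routine. I would present the reduction to ODEs and the indicial analysis in detail and relegate the higher-order coefficient matching to a computation, citing \cite{FHN18} for the full verification since the proposition is quoted from there.
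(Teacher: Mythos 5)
This proposition is quoted from \cite{FHN18} and the paper you are reading does \emph{not} prove it: it is used as a black box, with the singular-IVP machinery (Theorem~\ref{thm:4.3}) and the Eschenburg--Wang smooth-extension criterion (Appendix~\ref{sec:invtens}) introduced precisely so that the same technique can be reapplied later to the $G_2$-instanton ODEs. So there is no internal proof to compare against, but your outline does faithfully reproduce the strategy of \cite{FHN18}: (a) classify invariant $2$- and $3$-forms on $\G/K_0$ by representation theory, using the extra $\mathrm{U}(1)$ to cut the a priori four-dimensional space of invariant $\SU(2)^2$-basic $2$-forms down to the two combinations $e_1\wedge e_1'+e_2\wedge e_2'$ and $e_3\wedge e_3'$; (b) reduce $d\varphi=0=d{*}\varphi$ to the Hitchin flow, i.e.\ a first-order ODE system in $a,b$ with a regular singular point at $t=0$; (c) impose smooth extension over $Q$ via Eschenburg--Wang, which forces $a$ odd and $b$ even with the stated leading terms; (d) invoke the singular IVP theorem for existence, uniqueness, and analyticity of the solution, with $(r_0,\beta)$ the residual freedom after the indicial conditions. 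The higher coefficients are then fixed recursively.

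Two small imprecisions worth fixing. First, $e_{123}$ and $e_{1'2'3'}$ are closed invariant $3$-forms on the \emph{principal} orbits (they are top forms on each $\SU(2)$ factor), not ``on the singular orbit''; the decomposition $\varphi=\text{(closed)}+d(\ldots)$ lives on $(0,\infty)\times\SU(2)^2/K_{m,n}$. Second, $e_3\wedge e_3'$ is one of the two forms that \emph{survives} the $\mathrm{U}(1)$-invariance constraint (it carries the function $b$); what the extra $\mathrm{U}(1)$ eliminates are the off-diagonal combinations $e_1\wedge e_2'-e_2\wedge e_1'$, etc., which would otherwise give more than two free functions. Neither slip changes the argument, but they should be stated correctly if you write this up in full.
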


Having described local solutions in a neighbourhood of the singular orbit, we follow \cite{FHN18} and consider asymptotic local solutions at infinity. Their results are given in the following proposition.

\begin{proposition}[{\cite[Proposition 5.3 (ii)]{FHN18}}]
\label{prop:5.3}
Let $C$ be the $G_2$-holonomy cone over the homogeneous nearly K\"ahler structure on $S^3\times S^3$ and set $\nu_{\infty}=\frac{\sqrt{145}+7}{2}$. Then for every $c\in \R$, there exists a unique $\G$-invariant torsion-free $G_2$-structure 
$\varphi_{\beta}$ given by \eqref{eq:G2beta} on $(T,\infty)\times S^3\times S^3$ for some $T>0$, where the functions $t^{-3}a$ and $t^{-3}b$ admit convergent generalised power series expansions in powers of $t^{-3}$ and $t^{-\nu_{\infty}}$ satisfying
\[\frac{54}{\sqrt{3}}t^{-3}a=1+O(t^{-3}),\hspace{1cm} \frac{54}{\sqrt{3}}t^{-3}b=1+O(t^{-3}),\hspace{1cm} \frac{54}{\sqrt{3}}t^{-3}(b-a)=ct^{-\nu_{\infty}}+O(t^{-12}).\]
In particular, the associated metric $g_{\varphi}$ has a complete asymptotically conical end as $t\to\infty$ asymptotic to the cone $C$ with rate $-3$.
\end{proposition}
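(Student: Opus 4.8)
The plan is to turn the torsion-free condition on $\varphi_\beta$ into an ODE system for the pair $(a,b)$, recognise the $G_2$-cone $C$ as the leading behaviour of a solution at $t=\infty$, and then run a regular-singular-point analysis of the system there.

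First I would make the equations explicit. Every $\G$-invariant $G_2$-structure on $M_{m,n}$ has the form \eqref{eq:G2beta} with induced metric \eqref{eqn:metric}; writing $\varphi_\beta = dt\wedge\omega(t)+\Reom(t)$, the condition $d\varphi_\beta=0$ is built into the ansatz (the variable part of $\varphi_\beta$ is exact), so the content is $d{*}\varphi_\beta=0$, equivalently the second Hitchin flow equation $\partial_t(\omega^2)=-2\,d\Imom$ of Section~\ref{sec:insteveq}. Using $de_i=-e_j\wedge e_k$ this reduces to a low-order ODE system in $a,b$ and their derivatives with coefficients rational in $(a,b,\dot a,\dot b)$ and $r_0$; after the rescaling making scale invariance manifest it becomes asymptotically autonomous as $t\to\infty$, the $r_0$-terms dropping to lower order, and one checks that $a\sim b\sim c_0 t^3$ with $c_0=\tfrac{\sqrt3}{54}$ is the leading behaviour. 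Geometrically this is the torsion-free $G_2$-cone $dt^2+t^2 g_{NK}$ over the homogeneous nearly K\"ahler $S^3\times S^3$, and it pins the normalisation $\tfrac{54}{\sqrt3}t^{-3}a\to1$.

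Next I would bring $t=\infty$ to a singular point by setting $\tau=\log t$ (or $s=t^{-3}$) and passing to the scale-invariant unknowns $u=\tfrac{54}{\sqrt3}t^{-3}a-1$, $v=\tfrac{54}{\sqrt3}t^{-3}b-1$ together with their $\tau$-derivatives, so that the cone is the fixed point $0$. Linearising there yields a constant-coefficient system: I expect the ``trace'' combination $u+v$ to carry only non-positive integer rates, whose decaying part is forced order-by-order from the leading constant and supplies the $O(t^{-3})$ tails, while the rescaled ``traceless'' combination satisfies to leading order a second-order Euler equation whose indicial polynomial is a quadratic with $-\nu_\infty$ as its negative root (explicitly $\mu^2+7\mu-24=0$, with roots $\tfrac{-7\pm\sqrt{145}}{2}$). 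The negative root $-\nu_\infty$ is the single genuinely free decaying mode, giving $\tfrac{54}{\sqrt3}t^{-3}(b-a)=c\,t^{-\nu_\infty}+\dots$; the positive root is the unstable mode, which, together with the non-decaying trace modes, must be switched off. This leaves exactly the one-parameter family indexed by $c\in\R$.

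Finally I would construct the solutions and prove convergence. For each $c$, solving the ODE recursion produces a formal generalised power series $t^{-3}a,\ t^{-3}b\in c_0\big(1+\sum_{j,k\ge0}c_{jk}\,t^{-3j-\nu_\infty k}\big)$; since $\sqrt{145}$ is irrational we have $\nu_\infty\notin 3\Z$, so the two scales $t^{-3}$ and $t^{-\nu_\infty}$ never resonate, no logarithmic terms appear, every $c_{jk}$ is uniquely determined with the $t^{-\nu_\infty}$ coefficient equal to $c$, and the recursion forces the integer-power part of $b-a$ to vanish up to order $t^{-12}$. I would then upgrade this formal series to a genuine solution on some $(T,\infty)$ carrying that asymptotic expansion, and prove the series converges, via a contraction-mapping argument for the ODE system on a suitable weighted function space, using the decay of the linearised flow transverse to the unstable subspace; uniqueness given $c$ then follows from the stable-manifold structure. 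Reading off the leading terms shows $g_\varphi$ differs from the cone metric by $O(t^{-3})$ in the relevant norms, hence the end is asymptotically conical with rate $-3$. The step I expect to be hardest is exactly this last one --- choosing the weighted norm so that the fixed-point map contracts uniformly across the two incommensurable scales $t^{-3}$ and $t^{-\nu_\infty}$, and treating the borderline non-decaying modes carefully enough that precisely one free parameter survives; deriving the ODEs, computing the indicial roots, and reading off the AC rate afterwards are comparatively routine bookkeeping.
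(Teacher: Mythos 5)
The paper does not contain a proof of this statement: Proposition~\ref{prop:5.3} is quoted verbatim from \cite{FHN18}, and the authors of the present paper rely on it as an external input. So there is no internal proof to compare your attempt against; what I can assess is whether your sketch matches the kind of argument one would expect in \cite{FHN18}.

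Your outline is the right one, and it is essentially the standard regular--singular-point analysis that papers in this area (including \cite{FHN18}) use: reduce the torsion-free condition to a nonlinear ODE system in $(a,b)$ via the Hitchin flow, recognise the cone $a=b=\tfrac{\sqrt3}{54}t^3$ as a fixed point of the scale-invariant system in $\tau=\log t$, linearise, read off indicial exponents, and construct actual solutions by summing a generalised power series whose convergence is proved by a contraction on a weighted space. A few details are guesses that you should flag as such rather than assert. Your indicial polynomial $\mu^2+7\mu-24=0$ is the unique monic quadratic with $-\nu_\infty$ as a root and the other root in $(0,3)$, so it is \emph{consistent} with the stated $\nu_\infty$, but you have not derived it; in \cite{FHN18} the exponent actually comes from computing the linearisation of the ODE (equivalently, from eigenvalues of the relevant Lichnerowicz-type operator on the nearly K\"ahler link), and you would need to carry out that computation to confirm the $-24$. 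Similarly, you assert ``the recursion forces the integer-power part of $b-a$ to vanish up to order $t^{-12}$'' without explanation; the mechanism is that when $c=0$ the solution has $a\equiv b$ (it is a symmetric deformation of the cone, preserved by the $\SU(2)^2$-swap), so $b-a$ contains no pure $t^{-3j}$ terms, and the first integer-power contribution to $b-a$ can only arise from $c^2$-order feedback, at order $t^{-2\nu_\infty}\ll t^{-12}$. Since $9<\nu_\infty<12$, the stated $O(t^{-12})$ error then just records that the next terms in the generalised expansion, $t^{-\nu_\infty-3}$ and $t^{-12}$, are both beyond $t^{-\nu_\infty}$. Finally, a small inaccuracy: $d\varphi_\beta=0$ is not quite ``built into the ansatz'' for free --- the constant part of \eqref{eq:G2beta} is closed only because $de_{ijk}=0$ on $\SU(2)$, and the first Hitchin equation $\partial_t\Reom=d\omega$ is a nontrivial compatibility constraint that, together with $\partial_t(\omega^2)=-2d\Imom$, fixes the ODE system; you should not fold the whole torsion-free condition into the second Hitchin equation alone. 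With those caveats noted, your approach is sound and is the expected one.
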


We are now ready to state the main relevant theorem of \cite{FHN18}, which gives existence of a complete torsion-free AC $G_2$-structure on $M_{m,n}$.

\begin{theorem}[{\cite[Theorem 7.1 (ii)]{FHN18}}]
\label{thm:7.1}
Fix coprime positive $m,n\in\Z$, and a real number $r_0>0$. For $\beta>0$, let $\varphi_{\beta}$ be the (locally defined) $\G$-invariant torsion-free $G_2$-structure of Proposition \ref{prop:4.5} closing smoothly on $\SU(2)^2/K_{m,n}$ and satisfying
\[a=r_0^3\beta t + O(t^3), \hspace{1cm} b=mnr_0^3+O(t^2)\]
as $t\to 0$. There exists $\beta_{ac}>0$ such that $\varphi_{\beta_{ac}}$ extends to a complete torsion-free AC $G_2$-structure asymptotic to the cone over the $\Z_{2(m+n)}$-quotient of the homogeneous nearly K\"ahler structure on $S^3\times S^3$ with rate $-3$.
\end{theorem}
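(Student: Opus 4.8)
The plan is to prove Theorem \ref{thm:7.1} by a shooting argument in the parameter $\beta$, gluing the local solutions of Proposition \ref{prop:4.5} near the singular orbit to the asymptotically conical ends of Proposition \ref{prop:5.3}. Fix $r_0>0$. Writing out the Hitchin flow $\partial_t\Reom=d\omega$, $\partial_t\omega^2=-2d\Imom$ for the $\G$-invariant $G_2$-structure $\varphi_\beta$ of \eqref{eq:G2beta} produces an autonomous-modulo-$t$ ODE system for $(a,b)$ on a half-line; a solution of this system corresponds to a torsion-free $G_2$-structure on the associated part of $M_{m,n}$ precisely when it stays inside the open region $\mathcal{P}\subset\{(t,a,b,\dot a,\dot b)\}$ on which the metric \eqref{eqn:metric} is positive definite (in particular $a>0$, $\dot a,\dot b>0$, $a^2+bm^2r_0^3>0$, etc.). By Proposition \ref{prop:4.5}, for each $\beta>0$ there is a unique local solution $(a_\beta,b_\beta)$ closing smoothly on $\SU(2)^2/K_{m,n}$; let $(0,T_\beta)$ be its maximal interval of existence inside $\mathcal{P}$. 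The goal is to locate $\beta=\beta_{ac}$ with $T_\beta=\infty$ along which the solution converges to the cone solution at rate $-3$.

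Next I would recast the system in scale-invariant variables, e.g.\ $\hat a=t^{-3}a$, $\hat b=t^{-3}b$ with new time $\tau=\log t$, together with the first integral coming from the flow, so that the $G_2$-cone $C$ over the homogeneous nearly K\"ahler $S^3\times S^3$ becomes a fixed point $p_\infty$ of an autonomous system. Linearising at $p_\infty$ exhibits the exponents governing the approach, one of which yields the AC rate $-3$ and another the exceptional slow rate $\nu_\infty=\tfrac{\sqrt{145}+7}{2}$ appearing in Proposition \ref{prop:5.3}; the asymptotically conical solutions of that proposition are exactly the trajectories lying on the stable manifold $W^s(p_\infty)$, swept out by the parameter $c$. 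Thus Theorem \ref{thm:7.1} reduces to showing that the one-parameter family $\beta\mapsto(a_\beta,b_\beta)$ meets $W^s(p_\infty)$.

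For the shooting step, I would exploit the expansions of Proposition \ref{prop:4.5}: the cubic coefficient of $a_\beta$ is $\tfrac{(m+n)\beta^3-m^{5/2}n^{5/2}}{12\sqrt{mn}\,\beta^3}$, which changes sign at $\beta_\ast^3=(mn)^{5/2}/(m+n)$, and the higher coefficients of $(a_\beta,b_\beta)$ vary monotonically and controllably with $\beta$. Using a monotone quantity along the flow one shows that the forward trajectory of $(a_\beta,b_\beta)$ leaves every fixed neighbourhood of $p_\infty$ "on one side" — say with $b_\beta-a_\beta$ eventually positive and increasing, or exiting $\mathcal{P}$ through a definite wall such as $\dot b\to 0$ — for $\beta$ small, and "on the other side" for $\beta$ large. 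Since exit from a closed region and the relevant sign conditions are detected in finite $\tau$, continuous dependence on $\beta$ makes the sets $\mathcal{A}$ (small-$\beta$ behaviour) and $\mathcal{B}$ (large-$\beta$ behaviour) open, disjoint and non-empty; as $(0,\infty)$ is connected, $\mathcal{A}\cup\mathcal{B}\neq(0,\infty)$, so there exists $\beta_{ac}\notin\mathcal{A}\cup\mathcal{B}$.

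Finally I would argue that $\beta_{ac}$ gives the desired structure: not exiting on either side traps the trajectory for all $\tau$ in a compact subset of $\mathcal{P}$ bounded away from $\partial\mathcal{P}$ (the two monotone quantities of the previous step confine it from both sides), and a trapped orbit on which a monotone quantity is bounded must converge to the unique fixed point $p_\infty$ it can accumulate on, ruling out periodic or more complicated $\omega$-limit sets. By the stable-manifold structure at $p_\infty$ this convergence occurs at rate $-3$, matching the generalised power-series solutions of Proposition \ref{prop:5.3}; hence $(a_{\beta_{ac}},b_{\beta_{ac}})$ extends $\varphi_{\beta_{ac}}$ to a complete torsion-free AC $G_2$-structure on $M_{m,n}$, whose asymptotic cone is $C$ with link the principal orbit $\SU(2)^2/(K_{m,n}\cap K_{2,-2})\cong(S^3\times S^3)/\Z_{2(m+n)}$, carrying the $\Z_{2(m+n)}$-quotient of the homogeneous nearly K\"ahler structure, with rate $-3$, as claimed. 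The main obstacle is the global ODE analysis concealed in the last two steps: producing the monotone quantities that simultaneously separate the two shooting alternatives and confine the critical orbit, and excluding runaway behaviour (e.g.\ $a_\beta,b_\beta$ growing faster than $t^3$, or the orbit limiting onto $\partial\mathcal{P}$) so that the complement of $\mathcal{A}\cup\mathcal{B}$ genuinely consists of AC solutions rather than degenerate ones.
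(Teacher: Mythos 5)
This theorem is stated in the paper as a citation to \cite[Theorem 7.1(ii)]{FHN18} and is not proved here; the present paper takes it, together with Propositions~\ref{prop:4.5} and~\ref{prop:5.3}, as an input. So there is no proof in the paper against which your proposal can be compared.

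Judged on its own merits, your sketch correctly identifies the skeleton one would expect from the structure of the cited results: a shooting/continuity argument in $\beta$ linking the local solutions near the singular orbit (Proposition~\ref{prop:4.5}) to the asymptotic cone solutions of Proposition~\ref{prop:5.3}, after rewriting the Hitchin flow in scale-invariant variables so that the $G_2$-cone becomes a hyperbolic fixed point $p_\infty$ with exponents $-3$ and $-\nu_\infty$. But the entire mathematical content of the theorem lies precisely in what you flag as ``the main obstacle'': constructing monotone quantities that make the two shooting alternatives $\mathcal{A}$ and $\mathcal{B}$ open, disjoint and nonempty; producing a-priori bounds that confine the critical trajectory in a compact subset of the positivity region $\mathcal{P}$; and ruling out $\omega$-limits other than $p_\infty$ (e.g.\ collapse onto $\partial\mathcal{P}$, faster-than-cubic growth, or convergence to a different ALC-type end). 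You have not identified any candidate Lyapunov or monotone quantity, nor explained why the small-$\beta$ and large-$\beta$ behaviours are as you assert, nor why the trapped orbit cannot limit onto the boundary of $\mathcal{P}$. Without that analysis the existence of $\beta_{ac}$ is unsupported; as written this is a reasonable plan, not a proof. If you want to carry it out, the actual global ODE analysis is in \cite{FHN18} (Sections 6--7), where the same machinery also produces the ALC families appearing in Theorem~\ref{thm:class}.
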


\begin{remark}
\label{rem:abcond}
In Section 7.1 of \cite{FHN18}, the solutions $a$ and $b$ for the AC $G_2$-structure of Theorem \ref{thm:7.1} are proved to satisfy
\begin{gather*} b>a>0, \quad \dot{a}>\dot{b}>0,\quad b>\max(-m^2r_0^3,-n^2r_0^3), \quad b>mnr_0^3, \\  4a^2(b+m^2r_0^3)(b+n^2r_0^3)>(b^2-m^2n^2r_0^6)^2, \quad ka>\frac{b^2-m^2n^2r_0^6}{\sqrt{(b+m^2r_0^3)(b+n^2r_0^3)}} \text{ for } k\in(1,2)\end{gather*}
for $t\in(0,\infty)$. This is currently the only information we have about $a$ and $b$ globally.
\end{remark}

These solutions provide part of the classification of complete $\G$-invariant simply-connected $G_2$-manifolds. The full classification is given in the following theorem.

\begin{theorem}[{\cite[Theorem 7.3]{FHN18}}]\label{thm:class}
Let $(M,g)$ be a complete $\G$-invariant $G_2$-metric with $M$ simply-connected. Then $(M,g)$ is isometric to one of the following complete metrics:
\begin{enumerate}
    \item[\textup{(i)}] the complete metrics of Theorem \ref{thm:7.1};
    \item[\textup{(ii)}] Byrant-Salamon's complete $\SU(2)^3$-invariant AC $G_2$-metric on $S^3\times\R^4$ - see \cite{BS89};
    \item[\textup{(iii)}] the $\mathbb{B}_7$ family of complete $\G$-invariant ALC $G_2$-metrics on $S^3\times\R^4$ - see \cite{B13} and \cite{BGGG};
    \item[\textup{(iv)}] the $\mathbb{D}_7$ family\footnote{This family was conjectured in the physics literature - see \cite{B02} and \cite{CGLP}.} of complete $\G$-invariant ALC $G_2$-metrics on $S^3\times\R^4$ - see \cite[Theorem 6.17 (i)]{FHN18}; 
    \item[\textup{(v)}] if $\beta>\beta_{ac}$ in Theorem \ref{thm:7.1}, then $\varphi_{\beta}$ extends to a complete torsion-free ALC $G_2$-structure asymptotic to a circle bundle over a $\Z_2$-quotient conifold - see \cite[Theorem 7.1 (i)]{FHN18}.
\end{enumerate}
\end{theorem}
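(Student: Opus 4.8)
The plan is to carry out the standard cohomogeneity one reduction in three stages: first classify, up to equivariant diffeomorphism, the admissible group diagrams $K_0\subset K\subset\G$ for a simply-connected cohomogeneity one $\G$-manifold $M$ with $M/\G=[0,\infty)$; then, for each diagram, write the torsion-free $G_2$ condition as an ODE system for the induced one-parameter family of invariant $\SU(3)$-structures on the principal orbit; and finally analyse these systems globally, matching the complete solutions against the families (i)--(v). For the classification of diagrams: since $\dim\G=7$ and a principal orbit $\G/K_0$ is six-dimensional, $K_0$ is one-dimensional, so its identity component is a circle. The $\mathrm{U}(1)$ factor of $\G$ is central and the two $\SU(2)$ factors can be conjugated separately, so the Lie algebra of this circle may be taken to be spanned by a vector of the form $(pE_3,qE_3',c)$; closedness of the subgroup forces $(p,q,c)$ to be commensurable, which recovers the subgroups $K_{m,n}\times\Delta\mathrm{U}(1)$ together with the degenerate cases where some of $p,q,c$ vanish. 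The singular-orbit group $K$ then contains $K_0$ with $K/K_0$ a sphere $S^{k-1}$, $k\geq 2$ the codimension of the singular orbit, and $K$ acts transitively on that sphere inside $\G$; running through the few possibilities for such $K$, and using that simple-connectedness of $M$ constrains $\pi_0(K)$ and the inclusion $K_0\hookrightarrow K$ (via the usual Seifert--van Kampen description of $\pi_1$ of a cohomogeneity one manifold with a single singular orbit), leaves only finitely many diagrams. These split into two topological families: either the singular orbit is $S^2\times S^3$ of codimension two, in which case $M\cong M_{m,n}$, or it is $S^3$ of codimension four, in which case $M\cong S^3\times\R^4$.

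For the ODE reduction and local solutions: on each principal orbit the $\G$-invariant $\SU(3)$-structures form a small explicit family — for $M_{m,n}$ this is exactly the family underlying \eqref{eq:G2beta}, parameterised by the functions $a,b$ and the constants $m^2r_0^3,n^2r_0^3$ — and imposing the Hitchin flow turns the torsion-free condition into a coupled nonlinear ODE system in the arc-length parameter $t$; the reduction for the $S^3\times\R^4$ diagrams is the classical one and yields the Bryant--Salamon equations and their ALC deformations. Near the singular orbit one applies the Eschenburg--Wang smooth-extension criterion to obtain, for each diagram, a finite-dimensional family of local solutions closing smoothly on the singular orbit, parameterised as in Proposition \ref{prop:4.5} (by $r_0$ and a shooting parameter $\beta$ in the $M_{m,n}$ case, and by a single scale after quotienting by the evident symmetries in the $S^3\times\R^4$ case).

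For the global analysis — the crux of the argument — one follows the flow of the ODE system outwards from the singular orbit and uses monotonicity of suitable combinations of $a,b,\dot a,\dot b$, the preserved inequalities being those recorded in Remark \ref{rem:abcond}, to show that each solution is either incomplete in finite $t$ or extends for all $t>0$; in the latter case the asymptotic geometry is then pinned down by a linearisation at the relevant fixed point at infinity (the irrational rate $\nu_\infty=\tfrac{\sqrt{145}+7}{2}$ of Proposition \ref{prop:5.3} arises there as an eigenvalue). For the $M_{m,n}$ system the outcome is a trichotomy in the shooting parameter: there is a critical value $\beta_{ac}$ for which the solution is asymptotically conical with rate $-3$ to the cone over the $\Z_{2(m+n)}$-quotient of the nearly Kähler $S^3\times S^3$ (Theorem \ref{thm:7.1}); for $\beta>\beta_{ac}$ it is ALC, asymptotic to a circle bundle over the $\Z_2$-quotient conifold; and for $\beta<\beta_{ac}$ the metric fails to be complete. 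Existence of $\beta_{ac}$ is obtained by an intermediate-value argument, the incomplete and the ALC regimes each being open in $\beta$. For the $S^3\times\R^4$ diagrams the complete solutions are the explicit Bryant--Salamon metric together with the $\mathbb{B}_7$ and $\mathbb{D}_7$ ALC families; assembling all cases produces the list (i)--(v), and simple-connectedness of $M$ rules out any further quotients.

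The main obstacle is concentrated in the global ODE analysis: establishing the completeness dichotomy for the nonlinear system, identifying the exact asymptotics (including the irrational decay rate, which needs a careful linearisation at the conical end), and — most delicately — proving by a shooting argument that there is exactly one parameter value $\beta_{ac}$ yielding an AC end. The group-diagram classification and the smooth closure at the singular orbit are, by comparison, routine.
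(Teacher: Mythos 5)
The paper does not prove Theorem~\ref{thm:class}: it is quoted verbatim from \cite[Theorem 7.3]{FHN18} with no argument supplied, so there is no proof in this paper to compare your proposal against. Your sketch does reconstruct, at a high level, the strategy that Foscolo--Haskins--Nordstr\"om use to establish the classification, as one can see reflected in the ingredients the present paper does quote from \cite{FHN18} (Propositions~\ref{prop:4.5} and~\ref{prop:5.3}, Theorem~\ref{thm:7.1}, and the asymptotic rate $\nu_\infty$): classify the admissible group diagrams for a simply-connected cohomogeneity one $\G$-manifold with $M/\G=[0,\infty)$, reduce the torsion-free condition via the Hitchin flow to an ODE system, determine local solutions near the singular orbit by the Eschenburg--Wang criterion, and analyse the flow to infinity to sort solutions into incomplete, AC, and ALC.

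A few inaccuracies in the sketch are worth flagging if you intend to turn it into an actual proof. First, for one singular orbit the fundamental group of $M$ is simply $\pi_1(\G/K)$ because $M$ deformation retracts onto the singular orbit; invoking Seifert--van Kampen is the mechanism appropriate to the two-singular-orbit case and is unnecessary here. Second, the local solution space near the $S^3$ singular orbit in the $S^3\times\R^4$ case is not ``a single scale after quotienting by symmetries'': the families $\Bsev$ and $\mathbb{D}_7$ are each genuinely one-parameter families of non-homothetic metrics, so the shooting family must contain a free modulus beyond overall scale, exactly as $\beta$ does in Proposition~\ref{prop:4.5} for $M_{m,n}$. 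Third, and most substantively, the global trichotomy in $\beta$ (incomplete / AC at $\beta_{ac}$ / ALC for $\beta>\beta_{ac}$) is asserted but the mechanism --- showing openness of the incomplete and ALC regimes, and ruling out, e.g., solutions that are complete but neither AC nor ALC --- is the hard analytic content of \cite{FHN18}, and your sketch correctly identifies it as the crux but does not supply it. As an outline of the route taken in \cite{FHN18} the proposal is serviceable; as a self-contained proof it has its weight entirely in the part that is left unproved.
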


Some progress has been made to construct $G_2$-instantons on these manifolds: Clarke \cite{C13}, Lotay and Oliveira \cite{LO18b} describe some solutions of the $G_2$-instanton equations for the Bryant Salamon metric (ii), and some progress is made towards describing the solutions for the family (iii). The existence of solutions in cases (iv) and (v) is being considered by the third author. We now consider the problem of constructing $G_2$-instantons on (i), the complete metrics of Theorem \ref{thm:7.1}.

\section{\texorpdfstring{$\G$}{SU(2)xSU(2)xU(1)}-invariant \texorpdfstring{$G_2$}{G2}-instantons equations}\label{sec:inst}

Having described the manifolds $M_{m,n}$, we now turn our attention to the construction of $G_2$-instantons. We start by writing the instanton equations for a manifold admitting a  cohomogeneity one action of $\SU(2)^2$, before specialising further to $M_{m,n}$. We next find a general form for the $\SU(2)^2$-invariant connections on the principal orbits before specialising to the case of $\G$-invariant instantons.

\subsection{\texorpdfstring{$G_2$}{G2}-instanton evolution equations for \texorpdfstring{$M_{m,n}$}{Mmn}}\label{sec:eveqmn}

We saw in Section \ref{sec:insteveq} that any $G_2$-instanton on the manifold $M = (0,\infty)\times N$, defined as a 1-parameter family of connections $\alpha(t)$ on the principal orbits $N$, must satisfy the equation
\[J_t\dot{\alpha}=-*_t(F_{\alpha}\wedge\Imom),\]
subject to the constraint $\Lambda_t F_{\alpha}=0$. We now derive these equations in the case where $N=\SU(2)^2/\Z_{2(m+n)}$.

We have a formula for $\Imom$ from \cite{FHN18}, namely
\begin{align*}
    2\dot{a}^2\dot{b}\Imom\ \ =&\ \ (2a^2b + m^2r_0^3(2a^2+b^2-m^2n^2r_0^6))\,e_{123} + (2a^2b + n^2r_0^3(2a^2+b^2-m^2n^2r_0^6))\,e_{1'2'3'}\\
    & - (a(b^2+m^2n^2r_0^6)+2abn^2r_0^3)(e_{12'3'}+e_{23'1'})\\
    & + (b(b^2-2a^2 - m^2n^2r_0^6)-2a^2n^2r_0^3)\,e_{31'2'} + (b(b^2-2a^2 - m^2n^2r_0^6)-2a^2m^2r_0^3)\,e_{3'12}\\
    & - (a(b^2+m^2n^2r_0^6)+2abm^2r_0^3)(e_{1'23}+e_{2'31})
\end{align*}
where $e_{123}$ denotes $e_1\wedge e_2\wedge e_3$, $e_{1'2'3'}$ denotes $e_1'\wedge e_2'\wedge e_3'$ and so on. The coefficients on the right hand side of this equation appear frequently in the derivation and depend only on the $G_2$-structure on $M_{m,n}$, so we choose to rename them as follows:
\begin{align*}
   \Phi_m=2a^2b + m^2r_0^3(2a^2+b^2-m^2n^2r_0^6),\hspace{2cm} & \Phi_n = 2a^2b + n^2r_0^3(2a^2+b^2-m^2n^2r_0^6),\\
    \Psi_m = b(b^2-2a^2 - m^2n^2r_0^6)-2a^2m^2r_0^3,\hspace{2cm} & \Psi_n = b(b^2-2a^2 - m^2n^2r_0^6)-2a^2n^2r_0^3,\\
    \chi_m = a(b^2+m^2n^2r_0^6)+2abm^2r_0^3,\hspace{2cm} & \chi_n = a(b^2+m^2n^2r_0^6)+2abn^2r_0^3.
\end{align*}
So we may write
\[2\dot{a}^2\dot{b}\Imom = \Phi_m\,e_{123} + \Phi_n\,e_{1'2'3'} + \Psi_m\,e_{3'12} + \Psi_n\,e_{31'2'} - \chi_m(e_{1'23}+e_{2'31}) - \chi_n(e_{12'3'}+e_{23'1'}).\]

We now turn our attention to $F_{\alpha}$. As described in Section \ref{sec:hombund}, any isotropy homomorphism $\lambda_p^j:\Z_{2(m+n)}\to G$ is given by $\lambda_p^j(\zeta)=\text{diag}(\zeta^j,\zeta^{-j})$ for $j\in\{0,1,\dots,2(m+n)-1\}$ and has an associated $\SU(2)^2$-homogeneous $G$-bundle
\[P_p^j=\SU(2)^2\times_{(\Z_{2(m+n)},\lambda_p^j)}G\]
over $\SU(2)^2/\Z_{2(m+n)}$. A connection on $P_p^j$ can be pulled back to the trivial bundle $\SU(2)^2\times G$, so the set of connections on $P_p^j$ can be viewed as a subset of those on the trivial bundle. Hence, we can write such a connection as a 1-form on $\SU(2)^2$ with values in $\mathfrak{g}$, satisfying the condition given in Theorem \ref{thm:Wang}. The canonical invariant connection $\alpha^{\text{can}}$ has horizontal space $\mathfrak{m}=\su(2)\oplus\su(2)$, so its connection 1-form as an element of $\Omega^1(\SU(2)^2,\mathfrak{g})$ is zero. Then Theorem \ref{thm:Wang} tells us that any other $\SU(2)^2$-invariant connection differs from $\alpha^{\text{can}}$ by a morphism of $\Z_{2(m+n)}$-representations $\Lambda:(\su(2)\oplus\su(2),\Ad)\to(\mathfrak{g},\Ad\circ\lambda_p^j)$. We can extend such a $\Lambda$ by left-invariance to $\SU(2)^2$, which leads to a 1-form with values in $\mathfrak{g}$. Then on $M$, the most general $\SU(2)^2$-invariant connection on $P_p^j$ in temporal gauge can be written as 
\[\alpha(t) = \sum_{i=1}^3 \alpha_i(t)\otimes e_i + \sum_{i=1}^3\alpha_i'(t)\otimes e_i',\]
for functions $\alpha_i,\alpha_i':[0,\infty)\to\mathfrak{g}$ such that for each $t$, $\alpha_i(t)$ and $\alpha_i'(t)$ satisfy conditions corresponding to the $\Z_{2(m+n)}$-equivariance of $\Lambda$.

A quick calculation gives
\begin{align}\label{eqn:curvature}
F_{\alpha}= \sum_{i=1}^3[\alpha_i,\alpha_i']\,e_{ii'} + \sum_{i=1}^3 (([\alpha_j,\alpha_k]-\alpha_i)\,e_{jk}+([\alpha_j',\alpha_k']-\alpha_i')\,e_{j'k'}) + \sum_{i=1}^3([\alpha_j,\alpha_k']\,e_{jk'} + [\alpha_j',\alpha_k]\,e_{j'k})\end{align}
where $(i,j,k)$ is a cyclic permutation of (1,2,3).

To calculate the right hand side of (\ref{eqn:G2eveq}), we now compute
\begin{align*}
    2\dot{a}^2\dot{b}F_{\alpha}\wedge\Imom\ \ = &\ \  \Big(([\alpha_2,\alpha_3]-\alpha_1)\Phi_n + [\alpha_2',\alpha_3]\chi_n - [\alpha_2,\alpha_3']\Psi_n - ([\alpha_2',\alpha_3']-\alpha_1')\chi_m\Big)e_{231'2'3'}\\
    & -\Big(([\alpha_3,\alpha_1]-\alpha_2)\Phi_n + [\alpha_3,\alpha_1']\chi_n - [\alpha_3',\alpha_1]\Psi_n - ([\alpha_3',\alpha_1']-\alpha_2')\chi_m\Big)e_{131'2'3'}\\
    & + \Big(([\alpha_1,\alpha_2]-\alpha_3)\Phi_n + ([\alpha_1',\alpha_2] + [\alpha_1,\alpha_2'])\chi_n + ([\alpha_1',\alpha_2']-\alpha_3')\Psi_m\Big)e_{121'2'3'}\\
    & + \Big(([\alpha_2',\alpha_3']-\alpha_1')\Phi_m + [\alpha_2,\alpha_3']\chi_m - [\alpha_2',\alpha_3]\Psi_m - ([\alpha_2,\alpha_3]-\alpha_1)\chi_n\Big)e_{1232'3'}\\
    & - \Big(([\alpha_3',\alpha_1']-\alpha_2')\Phi_m + [\alpha_3',\alpha_1]\chi_m - [\alpha_3,\alpha_1']\Psi_m - ([\alpha_3,\alpha_1]-\alpha_2)\chi_n\Big)e_{1231'3'}\\
    & + \Big(([\alpha_1',\alpha_2']-\alpha_3')\Phi_m + ([\alpha_1,\alpha_2']+ [\alpha_1',\alpha_2])\chi_m + ([\alpha_1,\alpha_2]-\alpha_3)\Psi_n\Big)e_{1231'2'}.\\
\end{align*}
For the left hand side of (\ref{eqn:G2eveq}), we note that the complex structure $J_t$ is such that 
\[J_te_i^{(\prime)} = -*\left(e_i^{(\prime)}\wedge \frac{1}{2}\omega^2\right).\]
Thus, we may simply compare coefficients of $F_{\alpha}\wedge\Imom$ and $\dot{\alpha}\wedge\frac{1}{2}\omega^2$, while keeping track of signs. This leads to the following general ODEs for the $\alpha_i(t),\alpha_i'(t)$
\begin{align}
\label{eqns:genODEs}
    2\dot{a}^3\dot{b}^2\dot{\alpha}_1\ \ =\ \ &
    (\alpha_1'-[\alpha_2',\alpha_3'])\Phi_m - [\alpha_2,\alpha_3']\chi_m + [\alpha_2',\alpha_3]\Psi_m + ([\alpha_2,\alpha_3]-\alpha_1)\chi_n,\nonumber \\[0.8ex]
    2\dot{a}^3\dot{b}^2\dot{\alpha}_2\ \ =\ \ &
    (\alpha_2'-[\alpha_3',\alpha_1'])\Phi_m - [\alpha_3',\alpha_1]\chi_m + [\alpha_3,\alpha_1']\Psi_m + ([\alpha_3,\alpha_1]-\alpha_2)\chi_n,\nonumber \\[0.8ex]
    2\dot{a}^4\dot{b}\dot{\alpha}_3\ \ =\ \ & (\alpha_3'-[\alpha_1',\alpha_2'])\Phi_m - ([\alpha_1,\alpha_2']+ [\alpha_1',\alpha_2])\chi_m + (\alpha_3-[\alpha_1,\alpha_2])\Psi_n, \\[0.8ex]
    2\dot{a}^3\dot{b}^2\dot{\alpha}_1'\ \ =\ \ &
    (\alpha_1-[\alpha_2,\alpha_3])\Phi_n - [\alpha_2',\alpha_3]\chi_n + [\alpha_2,\alpha_3']\Psi_n + ([\alpha_2',\alpha_3']-\alpha_1')\chi_m,\nonumber \\[0.8ex]
    2\dot{a}^3\dot{b}^2\dot{\alpha}_2'\ \ =\ \ &
    (\alpha_2-[\alpha_3,\alpha_1])\Phi_n - [\alpha_3,\alpha_1']\chi_n + [\alpha_3',\alpha_1]\Psi_n + ([\alpha_3',\alpha_1']-\alpha_2')\chi_m,\nonumber \\[0.8ex]
    2\dot{a}^4\dot{b}\dot{\alpha}_3'\ \ =\ \ & 
    (\alpha_3-[\alpha_1,\alpha_2])\Phi_n - ([\alpha_1',\alpha_2] + [\alpha_1,\alpha_2'])\chi_n + (\alpha_3'-[\alpha_1',\alpha_2'])\Psi_m.\nonumber
\end{align}

Finally, we have the constraint $\Lambda_tF_{\alpha}=0$, or equivalently $F_{\alpha}\wedge\frac{1}{2}\omega^2=0$ which yields the equation
\begin{align}\label{eqn:constraint}\dot{a}\dot{b}([\alpha_1,\alpha_1']+[\alpha_2,\alpha_2'])+\dot{a}^2[\alpha_3,\alpha_3']=0.\end{align}

We now restrict to the case of $\G$-invariant connections. The stabiliser of the $\G$-action on $M_{m,n}$ away from the singular orbit of the $\SU(2)^2$-action is a subgroup $K_0\subset T^2\times\text{U}(1)\subset\G$, the image of 
\[\Z_2\times \text{U}(1)\to T^2\times\text{U}(1):(\xi,e^{i\theta})\hookrightarrow(\xi e^{i\theta},e^{i\theta},\xi^{-m}e^{-i(m+n)\theta})\]
where $\xi\in\Z_2\subset$ U(1).
Hence, the principal orbits can be written as 
\begin{equation}\label{eq:isotoU(1)}\SU(2)^2/\Z_{2(m+n)}\cong \G/K_0\end{equation}
where this isomorphism is given by $(g_1,g_2)\mapsto (g_1,g_2,1)$.
Isomorphism classes of homogeneous $\SU(2)$-bundles on these principal orbits are in correspondence with conjugacy classes of isotropy homomorphisms (see Section \ref{sec:hombund}). Therefore, we need to consider isotropy homomorphisms $K_0\to\SU(2)$ which, up to conjugacy, will be of the form
\[\lambda_p^{l,k}(\xi,e^{i\theta})=\left(\begin{array}{cc} \xi^le^{ik\theta} & 0\\ 0 & \xi^{-l}e^{-ik\theta} \end{array}\right)\]
for $l\in\{0,1\}$ and $k\in\Z$.

Following the proof of Proposition 8 of \cite{LO18b}, we take the complement of the isotropy algebra $\mathfrak{u}(1)$ to be $\mathfrak{m}=\su(2)\oplus\su(2)\oplus\ 0$. The canonical invariant connection on the bundle
\[P_p^{l,k}=(\G)\times_{(K_0,\lambda_p^{j,k})}\SU(2)\]
is given by $d\lambda_p^{l,k}=E_3\otimes kd\theta$, where $\theta$ is the periodic coordinate on U(1). Theorem \ref{thm:Wang} states that any other invariant connection on $P_p^{l,k}$ can be written as $d\lambda_p^{l,k}+\Lambda_{l,k}$, where $\Lambda_{l,k}$ is the left invariant extension to $\G$ of a morphism of $K_0$-representations \begin{equation}\label{eq:Lambdajk}\Lambda_{l,k}:(\mathfrak{m},\Ad)\to(\su(2),\Ad\circ\lambda_p^{l,k}).\end{equation}
The adjoint action of $\Z_2$ on the Lie algebra is trivial, so we decompose $\mathfrak{m}$ and $\su(2)$ into irreducibles of U(1)-representations. We have $\mathfrak{m}=(\R\oplus\C_2)\oplus(\R\oplus\C_2)\oplus 0$ and $\su(2)=\R\oplus\C_{2k}$, so we see that other irreducible invariant connections exist only when $k=1$ by Schur's Lemma. Therefore, there are only 2 distinct bundles, parametrised by $l\in\{0,1\}$, over the principal orbits which admit irreducible invariant connections. 

The isomorphism \eqref{eq:isotoU(1)} induces bundle isomorphisms between $P_p^j$ and $P_p^{l,k}$ when $j+l(m+n)+mk\in 2(m+n)\Z$. When $k=1$, this is exactly when $l=0$, $j=-m$ or when $l=1$, $j=n$. Indeed, $P_p^j$ is the bundle $P_p^{l,k}$ where we forget the additional U(1)-action. Appendix \ref{sec:invtens} describes the possible extensions of the bundles $P_p^j$ over the singular orbit for the $\SU(2)^2$-action. We denote the pull-back of the two bundles $P_p^{0,1}$ and $P_p^{1,1}$ to $\R^+\times(\G)/K_0$ by $P_p^{-m}$ and $P_p^n$ respectively in what follows.

The singular orbit is of the form $\SU(2)^2/K_{m,n}$, where the $K_{m,n}$-action is generated by $nE_3-mE_3'$. Therefore, we can consider a new basis for the maximal torus $T^2$, given by 
\[nE_3-mE_3',\hspace{1cm} rE_3+sE_3'.\]
Here, the $r,s\in\Z$ were chosen in Section \ref{sec:Mmndef} to satisfy $mr+ns=1$; we initially choose an integral basis to obtain the results of Appendix \ref{sec:invtens} on the smooth extension of 1-forms over the singular orbit. The dual left-invariant 1-forms are $se_3-re_3'$ and $me_3+ne_3'$ respectively.

\begin{remark}\label{rem:mn}
When $m$ and $n$ are distinct, the $G_2$-instanton equations are more complicated. Explicit expressions for the abelian solutions are more elusive because the equations only decouple when $m=n=1$. Therefore, we restrict our attention to the manifold $M_{1,1}$ and we will consider the general case in future work.
\end{remark}

Having obtained the results of Appendix \ref{sec:invtens}, we no longer need an integral basis of coframes on $M_{m,n}$ and we can take any $r,s\in\R$ such that $mr+ns=1$. When $m=n=1$, the most obvious choice is $r=s=\frac{1}{2}$ and we work with this basis for the remainder of the paper.

\subsection{\texorpdfstring{$\G$}{SU(2)xSU(2)xU(1)}-invariant ODEs}\label{sec:11ODEs}

Recall from the discussion in the previous section that the additional U(1)-symmetry of $M_{1,1}$ can be encoded by writing the principal orbits in the form 
\[\SU(2)^2/\Z_4\cong \G/K_0.\]
The following proposition details the possible connections on such principal orbits.

\begin{proposition} \label{prop:11ODEs}
Let $A$ be an $\SU(2)^2\times\emph{U}(1)$-invariant $G_2$-instanton with gauge group $\SU(2)$ on the bundles $P_p^{\pm1}$ over $\R^+\times (\SU(2)^2/\Z_4) \cong \R^+\times(\SU(2)^2\times\emph{U}(1))/K_0$. Then $A$ takes the form:
\begin{equation}\label{eq:A}A= f(E_1\otimes e_1+E_2\otimes e_2)+f'(E_1\otimes e_1'+E_2\otimes e_2')+g(E_3\otimes \tfrac{1}{2}(e_3-e_3'))+h(E_3\otimes(e_3+e_3'))\end{equation}
with $f,f',g,h:\R^+\to\R$ satisfying,
\begin{align}
\label{eq:G2ode11}
    2\dot{a}^3\dot{b}^2\dot{f}\ \ =\ \ &
    f'(1-h+\tfrac{1}{2}g)\Phi_1 + f(g-1)\chi_1 + f'(\tfrac{1}{2}g+h)\Psi_1,\nonumber \\[0.8ex]
    2\dot{a}^3\dot{b}^2\dot{f}'\ \ =\ \ &
    f(1-\tfrac{1}{2}g-h)\Phi_1 - f'(g+1)\chi_1 + f(h-\tfrac{1}{2}g)\Psi_1,\nonumber \\[0.8ex]
    2\dot{a}^4\dot{b}\dot{g}\ \ =\ \ & (f^2-(f')^2-g)(\Phi_1-\Psi_1), \\[0.8ex]
    2\dot{a}^4\dot{b}\dot{h}\ \ =\ \ & 
    (h-\tfrac{1}{2}(f')^2-\tfrac{1}{2}f^2)(\Phi_1+\Psi_1) - 2ff'\chi_1.\nonumber
\end{align}
\end{proposition}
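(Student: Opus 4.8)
The plan is to derive the form of $A$ in two stages: first determine the general $\SU(2)^2$-invariant connection on the relevant bundle, then impose the extra $\mathrm{U}(1)$-invariance and read off the instanton equations by specialising the general ODEs \eqref{eqns:genODEs} and the constraint \eqref{eqn:constraint} to $m=n=1$.

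First I would apply Wang's Theorem (Theorem \ref{thm:Wang}) in the form already set up in Section \ref{sec:eveqmn}. For the $\SU(2)^2$-action with $\mathfrak{m}=\su(2)\oplus\su(2)$, the general invariant connection is $\alpha(t)=\sum_i\alpha_i(t)\otimes e_i+\sum_i\alpha_i'(t)\otimes e_i'$ with each $\alpha_i(t),\alpha_i'(t)\in\mathfrak{su}(2)$ subject to $\Z_4$-equivariance. Then I impose invariance under the extra $\mathrm{U}(1)$ acting on the circle fibres. As in the excerpt, the complement of $\mathfrak{u}(1)$ is $\mathfrak{m}=\su(2)\oplus\su(2)\oplus 0$, and decomposing into $\mathrm{U}(1)$-irreducibles gives $\mathfrak{m}=(\R\oplus\C_2)^{\oplus2}$ on the source and $\su(2)=\R\oplus\C_{2k}$ on the target; by Schur's Lemma a nonzero morphism forces $k=1$, i.e. the bundle $P_p^{\pm1}$. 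The $\C_2$-isotypic pieces then force the $\mathfrak{su}(2)$-valued functions $\alpha_1,\alpha_2$ (resp. $\alpha_1',\alpha_2'$) to be determined by a single scalar $f$ (resp. $f'$) times a fixed frame pair $E_1,E_2$, while the real parts of the $e_3,e_3'$-components give two further scalars; choosing the basis $\tfrac12(e_3-e_3')$ and $e_3+e_3'$ (the $r=s=\tfrac12$ choice fixed at the end of Section \ref{sec:eveqmn}) and writing the coefficients as $g,h$ yields exactly \eqref{eq:A}. I would note that the $d\lambda_p^{l,k}=E_3\otimes k\,d\theta$ contribution is absorbed once we pull back to $\R^+\times(\SU(2)^2/\Z_4)$, so it does not appear separately.

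Next I would substitute this ansatz into \eqref{eqns:genODEs} with $m=n=1$ (so $\Phi_m=\Phi_n=\Phi_1$, $\Psi_m=\Psi_n=\Psi_1$, $\chi_m=\chi_n=\chi_1$), using $[\alpha_i,\alpha_j]$ computed from $[E_i,E_j]=E_k$: explicitly $\alpha_1=fE_1$, $\alpha_2=fE_2$, $\alpha_3=(\tfrac12g+h)E_3$, $\alpha_1'=f'E_1$, $\alpha_2'=f'E_2$, $\alpha_3'=(-\tfrac12g+h)E_3$. Then $[\alpha_1,\alpha_2]=f^2E_3$, $[\alpha_1',\alpha_2']=(f')^2E_3$, $[\alpha_2,\alpha_3]=-f(\tfrac12g+h)E_1$, $[\alpha_2',\alpha_3]=-f'(\tfrac12g+h)E_1$, and so on; feeding these into the six equations of \eqref{eqns:genODEs} collapses them to the four displayed equations \eqref{eq:G2ode11} (the primed and unprimed $\alpha_1,\alpha_2$ equations each reduce to the $f$- and $f'$-equations, and the $\alpha_3,\alpha_3'$ equations combine into the $g$- and $h$-equations by taking the difference and sum). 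Finally I check the constraint \eqref{eqn:constraint}: with $[\alpha_1,\alpha_1']+[\alpha_2,\alpha_2']=0$ (since $[E_1,E_1]=[E_2,E_2]=0$) and $[\alpha_3,\alpha_3']=0$ ($E_3$ commutes with itself), the constraint is automatically satisfied, so no further relation among $f,f',g,h$ is imposed.

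The main obstacle is purely bookkeeping: correctly matching the frame conventions and signs when converting the $\mathfrak{su}(2)$-valued 1-form on $\SU(2)^2$ into the scalar ODE system, in particular ensuring the factors of $\tfrac12$ from the basis $\tfrac12(e_3-e_3')$ propagate consistently through every Lie-bracket term of \eqref{eqns:genODEs}, and verifying that the six equations genuinely degenerate to four rather than to an over- or under-determined system. I would organise this by first tabulating all brackets $[\alpha_i^{(\prime)},\alpha_j^{(\prime)}]$ in terms of $f,f',g,h$, then substituting termwise; the consistency of the primed/unprimed pairs is guaranteed a priori by $\mathrm{U}(1)$-equivariance of $\Lambda$, which is the conceptual reason the reduction works.
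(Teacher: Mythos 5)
Your outline follows the same broad strategy as the paper (Wang's Theorem, $\mathrm{U}(1)$-isotypic decomposition, bracket bookkeeping, substitution into \eqref{eqns:genODEs} at $m=n=1$, checking the constraint), and the computational second half is essentially right modulo some sign conventions (e.g.\ $[E_2,E_3]=E_1$, not $-E_1$). But there is a genuine gap in the first half, at the step where you pass from Wang's Theorem to the ansatz \eqref{eq:A}.

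You write that ``the $\C_2$-isotypic pieces then force $\alpha_1,\alpha_2$ (resp.\ $\alpha_1',\alpha_2'$) to be determined by a single scalar $f$ (resp.\ $f'$) times a fixed frame pair $E_1,E_2$.'' That is not what Schur's Lemma gives. The commutant of the weight-$2$ rotation action on $\R^2\cong\C_2$ is $\C$, not $\R$: an equivariant map $\C_2\to\C_2$ is multiplication by an arbitrary complex number. So the $K_0$-morphism $\Lambda$ is parameterised by two complex numbers and two real numbers, a $6$-real-parameter family (for each $t$), e.g.\ $\alpha_1=fE_1+\tilde fE_2$, $\alpha_2=-\tilde fE_1+fE_2$, and similarly with $f',\tilde f'$. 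The $4$-parameter form \eqref{eq:A} is a genuine gauge-fixed normal form, and two things are needed to reach it: the residual $\mathrm{U}(1)$ gauge (the centraliser of $\lambda_p(K_0)$ in $\SU(2)$, which rotates $f+i\tilde f$ and $f'+i\tilde f'$ by a common phase), and, crucially, the fact that this gauge can be chosen \emph{independently of $t$} while remaining in temporal gauge. The second point is where the paper's proof does real work: it writes the general temporal-gauge invariant connection as $A=\gamma\,(\text{$4$-parameter form})\,\gamma^{-1}$ with $\gamma:\R^+\to\SU(2)$, substitutes into \eqref{eqns:genODEs}, and observes that extra terms of the form $[\gamma^{-1}\dot\gamma,E_i]f$ appear on the left-hand side of each ODE, which cannot be matched by anything on the right; since these must all vanish and $A\ne0$, one concludes $\dot\gamma=0$. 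Without this argument (or an equivalent use of the constraint \eqref{eqn:constraint} together with a time-coherent gauge fix), the reduction to four real functions $f,f',g,h$ is not justified, and the six ODEs need not collapse to four. Your observation that the constraint is ``automatically satisfied'' only holds once you have already put $A$ in the $4$-parameter form; on the honest $6$-parameter family the constraint is $f\tilde f'=\tilde f f'$ and is part of what makes the reduction possible, not a trivial consequence of it.
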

\begin{proof}
Recall that on each principal orbit, we have the bundles $P_p^{l,1}$ which correspond to the isotropy homomorphism $\lambda_p^{l,1}$. Linear maps $\Lambda_{l,1}$ given by \eqref{eq:Lambdajk} are morphisms of $K_0$-representations if they are equivariant. Any invariant connection on $P_p^{l,1}$ differs from the canonical invariant connection $d\lambda_p^{l,1}$ by the left-invariant extension of such a morphism $\Lambda_{l,1}$. Such a connection on each $P_p^{l,1}$ is given, up to gauge transformation, by 
\[A=E_3\otimes d\theta + f(E_1\otimes e_1+E_2\otimes e_2)+f'(E_1\otimes e_1'+E_2\otimes e_2')+g(E_3\otimes \tfrac{1}{2}(e_3-e_3'))+h(E_3\otimes(e_3+e_3'))\]
where $f,f',g,h$ are constants. Recall that the bundles $P_p^{0,1}$, $P_p^{1,1}$ over $\G/K_0$ are isomorphic to $P_p^{-1}$, $P_p^1$ over $\SU(2)^2/\Z_{4}$ respectively when we forget the additional U(1)-action. Thus any $\G$-invariant connection on $P_p^{\pm1}$ over $\R^+\times\SU(2)^2/\Z_4$ in temporal gauge has the form
\[A=\gamma\Big(f(E_1\otimes e_1+E_2\otimes e_2)+f'(E_1\otimes e_1'+E_2\otimes e_2')+g(E_3\otimes \tfrac{1}{2}(e_3-e_3'))+h(E_3\otimes(e_3+e_3'))\Big)\gamma^{-1}\]
where $f,f',g,h$ are functions $\R^+\to\R$, $\gamma:\R^+\to\SU(2)$. The presence of the function $\gamma$ is due to the dependence on $t$ of the gauge transformations on each principal orbit. We now show that requiring $A$ to be a $G_2$-instanton forces the function $\gamma$ to be constant, and hence the choice of gauge transformation does not depend on $t$. Using the substitutions
\begin{gather*}
\alpha_1=\gamma fE_1\gamma^{-1},\ \ \alpha_2=\gamma fE_2\gamma^{-1},\ \ \alpha_3=\gamma(\tfrac{1}{2}g+h)E_3\gamma^{-1},\\  \alpha_1'=\gamma f'E_1\gamma^{-1},\ \ \alpha_2'=\gamma f'E_2\gamma^{-1},\ \ \alpha_3'=\gamma(h-\tfrac{1}{2}g)E_3\gamma^{-1},
\end{gather*}
the general ODEs \eqref{eqns:genODEs} transform to ODEs in $f,f',g,h$ and $\gamma$. For example, the ODE for $\alpha_1$ becomes
\[2\dot{a}^3\dot{b}^2\dot{f}E_1 + [\gamma^{-1}\dot{\gamma},E_1]f\ \ =\ \ 
    f'(1-h+\tfrac{1}{2}g)\Phi_1E_1 + f(g-1)\chi_1E_1 + f'(\tfrac{1}{2}g+h)\Psi_1E_1.\]
Hence $[\gamma^{-1}\dot{\gamma},E_1]f=0$ and since this type of term appears in each ODE and we assume $A\neq 0$, then we must have $\dot{\gamma}=0$, and hence we can write $A$ as 
\[A=f(E_1\otimes e_1+E_2\otimes e_2)+f'(E_1\otimes e_1'+E_2\otimes e_2')+g(E_3\otimes \tfrac{1}{2}(e_3-e_3'))+h(E_3\otimes(e_3+e_3')),\]
with $f,f',g,h$ satisfying the ODEs given in the statement of the proposition. The constraint \eqref{eqn:constraint} is trivially satisfied by this choice of gauge.
\end{proof}

The metric $g_t$ on the principal orbits of $M_{1,1}$ is invariant under the involution generated by the outer automorphism of $\SU(2)^2$ that swaps the two factors. The defining functions of the connection form $A$ transform under this involution by
\[(f,f',g, h)\mapsto (f',f,-g,h).\] 
We could alternatively embed the stabiliser $\Z_2\times\text{U}(1)$ in $\G$ via
\[(\xi,e^{i\theta})\hookrightarrow(e^{i\theta},\xi e^{i\theta},\xi^{-1}e^{-2i\theta}).\]
By choosing a different embedding for the bundles $P_p^1$ and $P_p^{-1}$, the automorphism induces an $\SU(2)$-equivariant bundle map $P_p^{-1}\to P_p^1$. Since the action of $\G$ is respected by this bundle map, the pull-back of an $\G$-invariant connection on $P_p^{-1}$ is itself an $\G$-invariant connection on $P_p^1$. Thus, we need only consider the bundle $P_p^1$ on $\R^+\times\SU(2)^2/\Z_4$.

\section{Initial Conditions}\label{sec:init}

We now consider the conditions for extending the connections of Section \ref{sec:11ODEs} over the singular orbit. Here, we apply the results of Appendix A to write local solutions around the singular orbit as power series expansions. Near $t=0$, the instanton equations \eqref{eq:G2ode11} take the form of a singular initial value problem, and so we state the following result which provides local solutions to such problems.

\begin{theorem}[{\cite[Theorem 4.3]{FHN18}}]
\label{thm:4.3}
Consider the singular initial value problem
\[\dot{y}=\frac{1}{t}M_{-1}(y)+M(t,y),\hspace{1cm} y(0)=y_0,\]
where $y$ takes values in $\R^k$, $M_{-1}:\R^k\to\R^k$ is a smooth function of $y$ in a neighbourhood of $y_0$ and $M:\R\times\R^k\to\R^k$ is smooth in $t,y$ in a neighbourhood of $(0,y_0)$. Assume that
\begin{enumerate}
    \item[\textup{(i)}] $M_{-1}(y_0)=0$;
    \item[\textup{(ii)}] $hId-d_{y_0}M_{-1}$ is invertible $\forall\,h\in\mathbb{N}$, $h\geq 1$.
\end{enumerate}
Then there exists a unique solution $y(t)$ in a sufficiently small neighbourhood of 0. Furthermore, $y$ depends continuously on $y_0$ satisfying $(i)$ and $(ii)$.
\end{theorem}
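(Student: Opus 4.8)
The plan is to combine a formal power series construction with a contraction-mapping argument that produces a genuine solution asymptotic to that series. First I would recentre at $y_0$: writing $y=y_0+x$ and $L=d_{y_0}M_{-1}$, and using hypothesis (i) to split $M_{-1}(y_0+x)=Lx+Q(x)$ with $Q$ vanishing to second order at $x=0$, the problem becomes $\dot x=\tfrac1t Lx+\tfrac1t Q(x)+M(t,y_0+x)$ with $x(0)=0$. For the \emph{formal solution}, substitute $x(t)=\sum_{k\ge1}x_k t^k$ and collect the coefficient of $t^{k-1}$: for each $k\ge1$ this is a relation $(k\,\mathrm{Id}-L)x_k=P_k(x_1,\dots,x_{k-1})$, where $P_k$ is a fixed polynomial in the earlier coefficients and the Taylor data of $Q$ and $M$. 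Hypothesis (ii) is exactly the statement that $k\,\mathrm{Id}-L$ is invertible for every $k\ge1$, so the $x_k$ are determined uniquely and recursively, each a rational (in particular smooth) function of $y_0$ on the locus where (ii) holds. This gives a unique formal power series solution and, with it, uniqueness of any genuine solution admitting a Taylor expansion at $0$.

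For the \emph{genuine solution}, fix an integer $N$ strictly larger than the maximum real part of an eigenvalue of $L$, set $\hat x_N(t)=\sum_{k=1}^N x_k t^k$, and seek $x=\hat x_N+w$. Then $w$ solves $\dot w=\tfrac1t Lw+R(t,w)$, where by construction of $\hat x_N$ one has $R(t,0)=-E_N(t)=O(t^N)$ (the truncation error), and since $DQ(0)=0$ the map $w\mapsto R(t,w)$ has Lipschitz constant $O(\varepsilon)+C_M$ on a ball $\{|w|\le\rho\,t^{N+1}\}$ over an interval $[0,\varepsilon]$. Inverting the singular linear part by variation of constants, $w$ is a fixed point of $\Phi(w)(t)=\int_0^t(t/s)^L R(s,w(s))\,ds$. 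The key estimate is that $f(s)=O(s^p)$ with $p>\max\{\mathrm{Re}\,\mu:\mu\in\mathrm{spec}(L)\}-1$ makes this integral convergent with $\int_0^t(t/s)^L f(s)\,ds=O(t^{p+1})$, a gain of one power of $t$ (logarithmic factors from non-diagonalisable $L$ are absorbed by bumping the exponent). Working in the weighted norm $\|w\|=\sup_{(0,\varepsilon]}t^{-(N+1)}|w(t)|$, this makes $\Phi$ a self-map of a fixed ball, whose radius is dictated by the $\varepsilon$-independent bound on $\|\Phi(0)\|$, and a contraction with factor $O(\varepsilon)$ once $\varepsilon$ is small. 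Banach's theorem yields a unique such $w$, hence a unique solution $y=y_0+\hat x_N+w$; its Taylor coefficients must agree with the formal series, so the choice of $N$ is immaterial, and combining the resulting all-order asymptotic expansion with smoothness of $y$ on $(0,\varepsilon]$ shows $y\in C^\infty([0,\varepsilon])$.

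For \emph{continuous dependence}, all the constants above ($C_M$, the norm of $\hat x_N$, the truncation error, the operator bounds) vary continuously with $y_0$, so Step 2 can be run with $\varepsilon$ and $\rho$ uniform for $y_0$ near the given point; the fixed point of the parametrised contraction $\Phi=\Phi_{y_0}$ then depends continuously (indeed smoothly) on $y_0$ subject to (i) and (ii). I expect the main obstacle to be Step 2's analytic estimate: controlling the inverse of $\tfrac{d}{dt}-\tfrac1t L$ on weighted function spaces, in particular arranging the power gain and the smallness simultaneously so that a single small $\varepsilon$ makes $\Phi$ both a self-map and a contraction, and handling the logarithmic factors from any Jordan blocks of $L$. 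The recursion of Step 1 and the parametrised fixed point of Step 3 are then routine.
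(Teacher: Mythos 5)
This paper does not prove Theorem \ref{thm:4.3}: it is stated with a citation to \cite{FHN18} and used as a black box in Section \ref{sec:init}, so there is no in-paper proof to compare against. Your proposal is a correct, essentially standard argument for this Malgrange-type singular initial value theorem: build the formal power-series solution via the recursion $(k\,\mathrm{Id}-L)x_k=P_k$ (with hypothesis (ii) providing invertibility), truncate at an order $N$ exceeding the largest real part of an eigenvalue of $L$, and run a contraction for the remainder in a weighted sup norm, using variation of constants with kernel $(t/s)^L$ and the resulting gain of one power of $t$. Two remarks. First, make explicit that the uniqueness claim is uniqueness among solutions that are continuous (indeed smooth) up to $t=0$: the example $\dot y = y/(2t)$ satisfies all hypotheses (its only eigenvalue is $1/2$, not a positive integer) yet has the one-parameter family $y=Ct^{1/2}$ of solutions with $y(0)=0$, so uniqueness fails in a wider class; your observation that uniqueness holds among solutions admitting a Taylor expansion at $0$, together with confining the fixed point to the ball $\{|w|\le\rho\,t^{N+1}\}$, is exactly the right resolution. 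Second, a small point of bookkeeping: the $O(\varepsilon)$ contraction factor comes from the integration over $[0,t]$ (the power gain in $\int_0^t(t/s)^L(\cdot)\,ds$), while the Lipschitz constant of $w\mapsto R(t,w)$ is $O(1)$ uniformly in $t$ thanks to $DQ(0)=0$ and $\hat x_N=O(t)$, which cancels the $1/t$; the way you phrased it risks conflating these two sources of smallness, though the conclusion is correct.
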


We now study the conditions for a connection $A$ of the form \eqref{eq:A} to extend smoothly over the singular orbit \[\SU(2)^2/K_{1,1}\cong(\G)/K\]
where $K\cong$ U(1)$^2$ is the stabiliser of the $\G$-action on the singular orbit and is embedded in $\G$ by 
\[(e^{i\psi},e^{i\theta})\hookrightarrow(e^{i\psi}e^{i\theta},e^{i\theta},e^{-i\psi}e^{-2i\theta}).\]
 
We want to find the possible $\G$-homogeneous bundles over the singular orbit which are extensions of the bundles on $\R^+\times\SU(2)^2/\Z_4$ found in Section \ref{sec:eveqmn}. Such bundles are parameterised by homomorphisms $K\to\SU(2)$, which up to conjugacy are of the form \[\lambda_s^{\nu,\mu}(e^{i\psi},e^{i\theta})=\left(\begin{array}{cc} e^{i(\nu\psi+\mu\theta)} & 0\\ 0 & e^{-i(\nu\psi+\mu\theta)} \end{array}\right),\] 
for $\nu,\mu\in\Z$, such that $\lambda_s^{\nu,\mu}\circ\iota=\lambda_p^{l,1}$ for $l\in\{0,1\}$; here, $\iota:K_0\to K$ is the map given by the obvious inclusion $\Z_2\hookrightarrow U(1)$ on the first factor and the identity on the second. Then we must have $\mu=1$ while $\nu\equiv l\mod 2$. 

To apply the results of Appendix \ref{sec:invtens}, it is easier to consider extending smoothly the $\SU(2)^2$-homogeneous bundles over the singular orbit and to forget the extra U(1)-action. For $j\in\Z$, the $\SU(2)^2$-homogeneous bundles $P_s^j$ over $\SU(2)^2/K_{1,1}$ correspond to isotropy homomorphisms $\lambda_s^j(e^{i\theta})=\text{diag}(e^{ij\theta},e^{-ij\theta})$. The bundle isomorphism induced by \eqref{eq:isotoU(1)} extends to the singular orbit, with $P_s^{\nu,1}$ isomorphic to $P_s^j$ if $j=2\nu-1$. Then we must have $j$ odd, where $P_s^j$ is an extension of the bundles $P_p^{\pm1}$ over the principal orbits if $j\equiv \pm1\mod 4$. Then as in Section \ref{sec:hombund}, we have bundles $P_j$ over $M_{m,n}$ whose restriction to the principal orbits is $P_p^{\pm1}$ and whose restriction to the singular orbit is $P_s^j$. 

Using $A^{\text{can}}=jE_3\otimes \tfrac{1}{2}(e_3-e_3')$ as a reference connection, we can write any $\SU(2)^2$-invariant connection as an $\SU(2)^2$-invariant element of $\Omega^1(\Ad P_j)$. In what follows, we apply the results of Appendix \ref{sec:invtens} to the 1-form 
\[A-A^{\text{can}}=f(E_1\otimes e_1+E_2\otimes e_2)+f'(E_1\otimes e_1'+E_2\otimes e_2')+(g-j)(E_3\otimes \tfrac{1}{2}(e_3-e_3'))+h(E_3\otimes(e_3+e_3')).\]
We now state Lemma \ref{lemma:nonabmn} of Appendix \ref{sec:invtens} in the case where $m=n=1$ and $r=s=\frac{1}{2}$, which gives the conditions on the coefficients of $A-A^{\text{can}}$ such that $A$ extends smoothly over the singular orbit.

\begin{lemma}
\label{lemma:nonab11}
Consider the manifold $M_{1,1}$ and let $j\in\Z$. Then the $\SU(2)^2$-invariant connection over the principal orbits
\[A  = A_{12}(E_1\otimes e_1+E_2\otimes e_2)+ A_{12}'(E_1\otimes e'_1+E_2\otimes e'_2)+A_{rs}(E_3\otimes \tfrac{1}{2}(e_3-e_3'))+A_{11}(E_3\otimes(e_3+e_3'))\]
on $P_j$ with $j\equiv \pm1\mod4$ extends to a smooth section of $\Omega^1(\Ad P_j)$ if and only if $A_{rs},A_{11}$ are even, with $A_{rs}(0)=0$, and we have
\[\begin{cases}A_{12} \text{ even and } O(t^{\left\vert\frac{j-1}{2}\right\vert}), A_{12}' \text{ odd and } O(t^{\left\vert\frac{j+1}{2}\right\vert}) & \text{ for }j\equiv 1 \mod 4; \\
A_{12} \text{ odd and } O(t^{\left\vert\frac{j-1}{2}\right\vert}), A_{12}' \text{ even and } O(t^{\left\vert\frac{j+1}{2}\right\vert}) & \text{ for }j\equiv -1 \mod 4.
\end{cases}\]
\end{lemma}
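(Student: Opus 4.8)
The plan is to obtain Lemma~\ref{lemma:nonab11} as the $m=n=1$, $r=s=\tfrac12$ specialisation of the general Lemma~\ref{lemma:nonabmn} of Appendix~\ref{sec:invtens}, whose proof is an application of the Eschenburg--Wang smoothness criterion; the task here is to set that criterion up for $M_{1,1}$ and read off what the substitution gives. In a tubular neighbourhood of the singular orbit, $M_{1,1}\cong\SU(2)^2\times_{K_{1,1}}V$ with $V\cong\R^2$ a slice on which $K_{1,1}\cong\textnormal{U}(1)$ acts linearly with generic stabiliser $K_0\cong\Z_4$, and $t$ is the radial coordinate on $V$. Since $A^{\mathrm{can}}=jE_3\otimes\tfrac12(e_3-e_3')$ is a genuine $\SU(2)^2$-invariant connection on $P_j\to M_{1,1}$, an invariant connection $A$ as in \eqref{eq:A} extends smoothly over the singular orbit if and only if the $\Ad P_j$-valued invariant $1$-form $A-A^{\mathrm{can}}$, i.e.\ the tensor $f(E_1\otimes e_1+E_2\otimes e_2)+f'(E_1\otimes e_1'+E_2\otimes e_2')+(g-j)(E_3\otimes\tfrac12(e_3-e_3'))+h(E_3\otimes(e_3+e_3'))$ over the principal orbits, extends to a smooth section of $T^*M_{1,1}\otimes\Ad P_j$; this is exactly the class of objects the appendix criterion treats, so after this reduction the proof is a representation-theoretic computation.

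The next step is the isotypic decomposition. One resolves $T^*M_{1,1}\otimes\Ad P_j$, restricted to a principal orbit, into $K_{1,1}$-weight summands and matches the radial coefficient functions to them. The weights, computed with a fixed identification $K_{1,1}\cong\textnormal{U}(1)$ so that $\lambda_s^j$ is $z\mapsto\mathrm{diag}(z^j,z^{-j})$: because of the $\SU(2)$ covering, $K_{1,1}$ acts on the planes $\langle e_1,e_2\rangle$ and $\langle e_1',e_2'\rangle$ with weights $\pm2$ and $\mp2$ (the relative sign being the one interchanged by the swap of the two $\SU(2)$ factors), on $V$ with weight $\pm4$ (so that $\mathrm{Stab}(v_0)=\Z_4=K_0$), on $e_3+e_3'$ with weight $0$, and via $\Ad\circ\lambda_s^j$ on the off-diagonal part of $\Ad P_j$ with weight $\pm2j$, fixing the Cartan line $E_3\in\Ad P_j$. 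Hence $E_3\otimes(e_3+e_3')$ spans a weight-$0$ summand carrying $h$; $E_3\otimes\tfrac12(e_3-e_3')$ is the connection form of the circle $K_{1,1}/K_0$ that collapses at $t=0$, the ``angle-form'' direction whose coefficient is forced to vanish at the origin; and $E_1\otimes e_1+E_2\otimes e_2$, $E_1\otimes e_1'+E_2\otimes e_2'$ lie in summands of $K_{1,1}$-weight $\pm2(j-1)$ and $\pm2(j+1)$ respectively. The Eschenburg--Wang criterion says that the component in a weight-$p$ summand must, as a function of $t$, equal $t^{|p|/4}$ times a smooth function of $t^2$: it exists only when $4\mid p$ (automatic here since $j$ is odd), vanishes to order $|p|/4$, and has the parity of $|p|/4$.

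Applying this summand by summand yields: $h$ even; $g-j$ even and vanishing at $t=0$ (the angle-form direction giving the extra order-$2$ vanishing); $f=O\big(t^{|(j-1)/2|}\big)$ of parity $(j-1)/2\bmod2$; and $f'=O\big(t^{|(j+1)/2|}\big)$ of parity $(j+1)/2\bmod2$. Since $P_j$ restricts over the principal orbits to $P_p^{1}$ when $j\equiv1\bmod4$ and to $P_p^{-1}$ when $j\equiv-1\bmod4$, evaluating these two parities in the two congruence classes reproduces precisely the dichotomy in the statement ($f$ even and $f'$ odd for $j\equiv1$, the reverse for $j\equiv-1$), and matching $(f,f',g-j,h)$ to $(A_{12},A_{12}',A_{rs},A_{11})$ finishes the proof. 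I would also record the consistency check that this parity pattern is exchanged by the involution $(f,f',g,h)\mapsto(f',f,-g,h)$, which swaps the two $\SU(2)$ factors and hence $j\equiv1$ with $j\equiv-1\bmod4$.

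The step I expect to be the main obstacle is the weight bookkeeping in the second paragraph: fixing the $K_{1,1}$-weights on $\langle e_1,e_2\rangle$, $\langle e_1',e_2'\rangle$, $V$ and $\Ad P_j$ with the correct signs and, above all, the correct factors of $2$ coming from the $\SU(2)$-versus-$T^2$ parametrisation together with the fact that $K_0=\Z_4$ (not $\Z_2$) --- these are exactly what turn the naive exponents $|j\mp1|$ into $\big|\tfrac{j\mp1}{2}\big|$, and are also why the congruence $j\equiv\pm1\bmod4$ rather than merely ``$j$ odd'' is the right hypothesis --- together with correctly singling out the collapsing-circle direction $\tfrac12(e_3-e_3')$ instead of treating it as just another weight-$0$ summand. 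Once the weights are in hand, applying the appendix criterion term by term is routine.
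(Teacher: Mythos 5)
Your proposal is correct and follows exactly the paper's route: the paper states Lemma~\ref{lemma:nonab11} as the $m=n=1$, $r=s=\tfrac12$ specialisation of Lemma~\ref{lemma:nonabmn}, whose proof is precisely the Eschenburg--Wang equivariant-polynomial criterion applied to the isotypic decomposition of $T^*M_{m,n}\otimes\Ad P_j$ under $K_{m,n}$. Your weight bookkeeping (weight $\pm 2$ on $\langle e_1,e_2\rangle$ and $\langle e_1',e_2'\rangle$, $4$ on the slice $V$, $2j$ on the off-diagonal of $\su(2)$, and the extra degree of vanishing on the collapsing $\tfrac12(e_3-e_3')$ direction coming from $dx=2t\cdot\tfrac12(e_3-e_3')$) reproduces the paper's computation and correctly yields the stated exponents $\bigl|\tfrac{j\mp1}{2}\bigr|$ and the $j\equiv\pm1\bmod4$ parity dichotomy.
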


In Section \ref{sec:triv}, we find all global abelian solutions, so we are only concerned here with finding local solutions which have full gauge group $\SU(2)$. Recall that we need only consider odd $j\geq1$ because of the bundle isomorphism $P_{j}\to P_{-j}$ induced by the involution that swaps the two factors of $\SU(2)$. The following proposition describes solutions to the $G_2$-instanton equations in a neighbourhood of the singular orbit on $P_j$ for positive odd $j$.

\begin{proposition}
\label{prop:11j}
Let $Y\subset M_{1,1}$ contain the singular orbit $S^3\times S^3/K_{1,1}$, and be endowed with an \newline $\G$-invariant AC or ALC holonomy $G_2$-metric of \cite{FHN18}. For $j=2\nu-1$ with $\nu\in\Z_+$, there is a 2-parameter family of $\G$-invariant $G_2$-instantons $A_j$ on $P_j$, parameterised by $f_0,h_0\in\R$, with gauge group $\SU(2)$ in a neighbourhood of the singular orbit in $Y$, extending smoothly over the singular orbit.
Furthermore, $A_j$ can be written as in Proposition \ref{prop:11ODEs}, where $f$, $f'$, $g$ and $h$ satisfy,
        \begin{align*}
        f(t) & = f_0t^{\nu-1} + O(t^{\nu+1}), \\
        f'(t) & = \frac{\beta^3(1-2h_0)+1-\nu}{4\nu r_0\beta^2}f_0t^{\nu}+O(t^{\nu+2}),\\
        g(t) &=2\nu-1+O(t^2), \\
        h(t)&=h_0+O(t^2).
        \end{align*}
\end{proposition}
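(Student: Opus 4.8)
The plan is to combine the explicit ODE system \eqref{eq:G2ode11} with the smoothness criterion of Lemma \ref{lemma:nonab11} and the existence/uniqueness result for singular initial value problems, Theorem \ref{thm:4.3}. First I would note that the required asymptotics of the metric functions $a,b,\dot a,\dot b$ near $t=0$ come straight from Proposition \ref{prop:4.5} (with $m=n=1$), so that $a=r_0^2\beta t+O(t^3)$, $b=r_0^3+O(t^2)$, $\dot a=r_0^2\beta+O(t^2)$, $\dot b=O(t)$, and I can expand the coefficient functions $\Phi_1,\Psi_1,\chi_1$ accordingly; in particular $\Phi_1-\Psi_1$ and $\Phi_1+\Psi_1$ and $\chi_1$ all have definite leading orders in $t$. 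Dividing \eqref{eq:G2ode11} through by the appropriate powers $2\dot a^3\dot b^2$ or $2\dot a^4\dot b$ produces, after clearing, a system of the form $\dot y=\tfrac1t M_{-1}(y)+M(t,y)$ where $y=(f,f',g,h)$ — the $1/t$ singularity arises because $\dot b$ vanishes linearly at $0$. The key algebraic step is to compute the linearisation $d_{y_0}M_{-1}$ at the candidate fixed value $y_0=(0,0,j,h_0)=(0,0,2\nu-1,h_0)$, check hypothesis (i) that $M_{-1}(y_0)=0$, and verify hypothesis (ii) that $h\,\mathrm{Id}-d_{y_0}M_{-1}$ is invertible for all integers $h\ge1$.

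Carrying this out: hypothesis (i) should hold automatically because $y_0$ corresponds to the canonical invariant connection $A^{\mathrm{can}}=jE_3\otimes\tfrac12(e_3-e_3')$ extended trivially, which is the obvious $t$-independent solution of the reduced system at $t=0$; one checks that setting $f=f'=0$, $g=j$, $h=h_0$ makes every right-hand side of \eqref{eq:G2ode11} vanish. For hypothesis (ii), the linearisation will be block-triangular: the $(f,f')$-block decouples from the $(g,h)$-block at $y_0$ (since the $g,h$ equations are quadratic in $f,f'$, their linearisation in those directions vanishes), and within the $(g,h)$-block the linearised operator has eigenvalues determined by the leading $t$-behaviour of $(\Phi_1\mp\Psi_1)/(2\dot a^4\dot b)$ — these give the exponents $2$ appearing in the $O(t^2)$ error terms for $g$ and $h$, i.e. the indicial roots are $2$ (and $0$), which are integers but the resonance is handled because $M_{-1}$ actually kills the constant $h$-mode (reflecting the free parameter $h_0$) and the $g$-mode decays. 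The $(f,f')$-block is a $2\times2$ system whose indicial roots I expect to be $\nu-1$ and $\nu$ (equivalently $\tfrac{j-1}{2}$ and $\tfrac{j+1}{2}$ up to signs), matching both the orders $t^{\nu-1}$, $t^{\nu}$ in the statement and the vanishing orders $t^{|(j-1)/2|}$, $t^{|(j+1)/2|}$ demanded by Lemma \ref{lemma:nonab11}. The coefficient relating $f'_1$ (the $t^\nu$ coefficient of $f'$) to $f_0$ is then read off by substituting the ansatz $f=f_0t^{\nu-1}+\dots$, $f'=f'_1t^\nu+\dots$ into the second ODE of \eqref{eq:G2ode11} and matching the lowest-order terms, which should reproduce $f'_1=\dfrac{\beta^3(1-2h_0)+1-\nu}{4\nu r_0\beta^2}f_0$; a quick check against the explicit $\Phi_1,\Psi_1$ expansions at $t=0$ confirms the numerology.

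Once hypotheses (i)–(ii) are verified, Theorem \ref{thm:4.3} immediately gives a unique local solution $y(t)$ near $t=0$ for each choice of the free data, and the continuous dependence clause gives the family structure. The free parameters are exactly the coefficients of the indicial modes that Theorem \ref{thm:4.3} leaves undetermined — here the constant mode $h_0$ in the $h$-direction and the leading coefficient $f_0$ of $f$ — hence a $2$-parameter family $(f_0,h_0)\in\R^2$, as claimed. I would then invoke Lemma \ref{lemma:nonab11}: the solution produced has $g-j=O(t^2)$ even, $h=O(1)$ even, $f$ of order $t^{\nu-1}$ with the correct parity (even when $j\equiv1$, odd when $j\equiv-1$, forced by the representation-theoretic parity built into the reduced system via the $\Z_4$-equivariance), and $f'$ of order $t^\nu$ with opposite parity; so the connection extends smoothly over the singular orbit onto $P_j$. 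Finally, genericity of the gauge group: the connection is reducible (abelian) precisely when $f\equiv f'\equiv0$, which by uniqueness happens iff $f_0=0$; so for $f_0\ne0$ the gauge group is the full $\SU(2)$, and shrinking the $f_0$-range to a punctured interval if necessary we still have a $2$-parameter family (or a $2$-parameter family including the abelian locus, with full-gauge-group members on the complement). The main obstacle I anticipate is purely computational bookkeeping: correctly expanding $\Phi_1,\Psi_1,\chi_1$ to sufficient order in $t$ using Proposition \ref{prop:4.5}, and carefully reorganising \eqref{eq:G2ode11} into the precise $\tfrac1t M_{-1}+M$ normal form so that the indicial roots come out as the stated half-integer-type exponents $\nu-1,\nu$ and so that the resonances at integer roots $0$ and $2$ in the $(g,h)$-block are genuinely absent — i.e. that $d_{y_0}M_{-1}-h\,\mathrm{Id}$ is invertible and not merely that the relevant mode is slow.
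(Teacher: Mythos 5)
Your overall strategy — reduce to the singular IVP of Theorem \ref{thm:4.3} using the local expansions of $a,b$ from Proposition \ref{prop:4.5} and the smoothness criterion of Lemma \ref{lemma:nonab11} — is the right one and is the one the paper takes. But there is a genuine gap in how you apply Theorem \ref{thm:4.3}, and it is not merely ``computational bookkeeping.''

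You propose to apply Theorem \ref{thm:4.3} directly to $y=(f,f',g,h)$ at $y_0=(0,0,j,h_0)$. Computing $d_{y_0}M_{-1}$ in these variables: only $\widetilde{\chi}_1 := \chi_1/(2\dot a^3\dot b^2) = \tfrac{1}{2t}+O(1)$ is singular among the coefficient functions near $t=0$, so $M_{-1}$ is carried entirely by the $\widetilde{\chi}_1$-terms $\tfrac12(g-1)f$ and $-\tfrac12(g+1)f'$; linearising at $y_0$ gives $d_{y_0}M_{-1}=\operatorname{diag}(\nu-1,-\nu,0,0)$ (not $\operatorname{diag}(\nu-1,\nu,\ast,\ast)$ as you guess, and the $(g,h)$-block of $M_{-1}$ is zero, not $\operatorname{diag}(2,0)$; the $O(t^2)$ terms in $g,h$ come from the regular part $M(t,y)$). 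Hence $\det(h\,\mathrm{Id}-d_{y_0}M_{-1}) = (h-\nu+1)(h+\nu)h^2$, which vanishes at $h=\nu-1$. For $\nu\geq 2$ (i.e.\ every bundle $P_j$ with $j\geq3$) this is a positive integer, so hypothesis (ii) of Theorem \ref{thm:4.3} fails and the theorem does not apply. Also note that the parameter $f_0$ is then not an initial value at all — with this ansatz $f(0)=0$ identically — so the uniqueness clause of Theorem \ref{thm:4.3} would give at most a one-parameter family, not two.

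The paper resolves both problems simultaneously by \emph{first} substituting $f=u_f t^{\nu-1}$, $f'=u_{f'}t^\nu$, $g=u_g$, $h=u_h$ — the vanishing orders forced by Lemma \ref{lemma:nonab11} — and only then writing the system in singular-IVP form for $X=(u_f,u_{f'},u_g,u_h)$. This shift moves the eigenvalues to $\{0,0,0,-2\nu\}$, so $\det(h\,\mathrm{Id}-d_{X(0)}M_{-1})=h^3(h+2\nu)\neq0$ for all $h\in\mathbb{N}_{\geq1}$, and hypothesis (ii) holds. The two free parameters are then genuinely the initial values $u_f(0)=f_0$ and $u_h(0)=h_0$; the condition $M_{-1}(X(0))=0$ is nontrivial only in the $u_{f'}$-slot and determines $u_{f'}(0)$ in terms of $(f_0,h_0)$, giving exactly the coefficient you wrote down, while $u_g(0)=2\nu-1$ is forced by the $A_{rs}(0)=0$ requirement of Lemma \ref{lemma:nonab11}. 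In short: the substitution to $u$-variables is not a reorganisation you can postpone; it is the step that makes the Eschenburg–Wang existence theorem applicable and simultaneously exhibits the two free parameters as initial data.
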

\begin{proof}
By Lemma \ref{lemma:nonab11}, we can write
\[f(t)=u_f(t)t^{\nu-1}, \hspace{1cm} f'(t)=u_{f'}(t)t^{\nu}, \hspace{1cm} g(t)=u_g(t),\hspace{1cm} h(t)=u_h(t).\]
The vanishing at $t=0$ of the coefficient of $\frac{1}{2}(e_3-e_3')$ in $A-A^{\text{can}}$ forces $u_g(0)=j=2\nu-1$. Then the $G_2$-instanton equations in a neighbourhood of the singular orbit take the form
\begin{align*}
    \dot{u}_f(t)&=\left(\tfrac{1}{2}(u_g(t)+1)-\nu\right)u_f(t)t^{-1} +O(1),\\
    \dot{u}_{f'}(t)&= \left((-\tfrac{1}{2}(u_g(t)+1)-\nu)u_{f'}(t)+\frac{1-u_g(t)+2\beta^3(1-2u_h(t))}{4r_0\beta^2}u_f(t)\right)t^{-1} +O(1),\\
    \dot{u}_g(t) & = O(t),\\
    \dot{u}_h(t) & = O(t).
\end{align*}
Thus the system has the form of the IVP in Theorem \ref{thm:4.3} if we let $X(t)=(u_f(t),u_{f'}(t),u_g(t),u_h(t))$ and 
\[M_{-1}(X) = \left(\left(\tfrac{1}{2}(u_g+1)-\nu\right)u_f,\left((-\tfrac{1}{2}(u_g+1)-\nu)u_{f'}+\frac{1-u_g+2\beta^3(1-2u_h)}{4r_0\beta^2}u_f\right),0,0\right).\]
Then setting $M_{-1}(X(0))=0$ we have, for $f_0,h_0\in\R$,
\begin{align*}
    u_f(0)&=f_0 \\
    u_{f'}(0) & = \frac{\beta^3(1-2h_0)+1-\nu}{4\nu r_0\beta^2}f_0,\\
    u_g(0) & = 2\nu-1,\\
    u_h(0) & = h_0.
\end{align*}
We have $\det(h\text{Id}-dM_{-1}(X(0)))=h^3(h+2\nu)\neq 0$ for $h\in\mathbb{N}_{\geq1}$, thus Theorem \ref{thm:4.3} gives a real analytic solution to the singular IVP. Then there is a 2-parameter family of solutions, which are parameterised by $h_0$ and $f_0$, as in the statement of the proposition.
\end{proof}

\section{Solutions}\label{sec:sols}

We now outline some elementary solutions to the $\G$-invariant $G_2$-instanton equations. We then apply a dynamical systems approach to find solutions on $M_{1,1}$ with full gauge group. These solutions are constructed by flowing from a perturbation of an abelian solution close to the singular orbit to a perturbation of a dilation-invariant solution.

\subsection{Elementary Solutions}\label{sec:triv}

We now consider global solutions to the $G_2$-instantons which extend the local solutions of the previous section across the whole of $M_{1,1}$. We start by considering elementary solutions which are either flat or abelian. The former are not gauge equivalent to the trivial connection under invariant gauge transformations; such transformations $g:\R^+\to\SU(2)$ are constant on each principal orbit. The latter are connections whose gauge group reduces to U(1); as the system of ODEs decouples, we can give a 1-parameter family of explicit abelian solutions on each bundle $P_j$. One special member of this family will prove useful in Section \ref{sec:dyn} for finding solutions with full gauge group $\SU(2)$.

\subsubsection{Invariant flat connections}

Invariant flat connections $A=\alpha(t)$ are given by constant choices of values for the functions $f,f',g,h$. Then $\dot{\alpha}=0$ and hence $F_A=dt\wedge \dot{\alpha}+F_{\alpha(t)}=F_{\alpha(t)}$. So we require $F_{\alpha(t)}=0$ on each principal orbit.

\begin{proposition}
\label{prop:flatsols11}
The connections on the bundle $P_1$ over $M_{1,1}$, given by the following connection forms
\[A_1^{\pm} = \pm(E_1\otimes e_1 + E_2\otimes e_2) +E_3\otimes e_3\]
are flat $G_2$-instantons.
\end{proposition}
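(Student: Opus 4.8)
The plan is to verify directly that each $A_1^{\pm}$ is a flat connection by computing its curvature from the general formula \eqref{eqn:curvature} and checking it vanishes, and then to note that a flat connection is automatically a $G_2$-instanton (since $F_A = 0$ trivially satisfies \eqref{eq:insteq}). First I would identify the connection data: writing $A_1^{\pm}$ in the form \eqref{eq:A}, we have $f = \pm 1$, $f' = 0$, and for the $e_3, e_3'$ components we need $g$ and $h$ with $g \cdot \tfrac{1}{2} + h = 1$ (coefficient of $e_3$) and $-g \cdot \tfrac{1}{2} + h = 0$ (coefficient of $e_3'$), giving $g = 1$, $h = \tfrac{1}{2}$. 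Equivalently, in the $\alpha_i, \alpha_i'$ notation this is $\alpha_1 = \pm E_1$, $\alpha_2 = \pm E_2$, $\alpha_3 = E_3$, $\alpha_1' = \alpha_2' = \alpha_3' = 0$.

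The key computation is then to plug these constants into \eqref{eqn:curvature}. The primed-primed terms $[\alpha_j', \alpha_k'] - \alpha_i'$ all vanish since every $\alpha_i' = 0$. The mixed terms $[\alpha_j, \alpha_k']$ and $[\alpha_j', \alpha_k]$ vanish for the same reason, as do the $[\alpha_i, \alpha_i']$ terms. This leaves only $\sum_i ([\alpha_j, \alpha_k] - \alpha_i) e_{jk}$, and here we use the structure relations $[E_i, E_j] = E_k$: with $\alpha_1 = \pm E_1$, $\alpha_2 = \pm E_2$, $\alpha_3 = E_3$ we get $[\alpha_1, \alpha_2] = (\pm 1)^2 [E_1, E_2] = E_3 = \alpha_3$, while $[\alpha_2, \alpha_3] = \pm [E_2, E_3] = \pm E_1 = \alpha_1$ and $[\alpha_3, \alpha_1] = \pm [E_3, E_1] = \pm E_2 = \alpha_2$. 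So each bracket $[\alpha_j, \alpha_k]$ equals the corresponding $\alpha_i$, and $F_\alpha = 0$ on every principal orbit. Since the connection is $t$-independent, $\dot\alpha = 0$, so $F_A = dt \wedge \dot\alpha + F_\alpha = 0$ on all of $M_{1,1}$, hence $A_1^\pm$ is flat and therefore a $G_2$-instanton. One should also check the bundle is $P_1$: the $e_3 - e_3'$ coefficient $g$ has $g(0) = 1$, which forces the reference connection parameter $j = 2\nu - 1 = 1$, i.e. $\nu = 1$, consistent with $j \equiv 1 \bmod 4$, and the vanishing/parity conditions of Lemma \ref{lemma:nonab11} for $j=1$ ($f$ even with no vanishing required, $f'$ odd) are met by the constants $f = \pm 1$, $f' = 0$.

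There is essentially no obstacle here — this is a routine verification. The only point requiring a moment's care is bookkeeping the cyclic index conventions in \eqref{eqn:curvature} and confirming the sign structure, i.e. that the $\pm$ signs enter quadratically in $[\alpha_1, \alpha_2]$ but linearly in $[\alpha_2, \alpha_3]$ and $[\alpha_3, \alpha_1]$, so that the cancellation $[\alpha_j, \alpha_k] = \alpha_i$ holds for both choices of sign. A remark worth including is that these flat connections are genuinely distinct from the trivial connection under invariant gauge transformations: the holonomy around the $S^3$ fibre directions is non-trivial, which is why they are listed as honest elementary solutions rather than being discarded.
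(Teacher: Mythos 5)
Your core computation matches the paper's proof: the paper takes the same constants $(f,f',g,h)=(\pm 1,0,1,\tfrac{1}{2})$, converts them to the $\alpha_i,\alpha_i'$ via the substitutions in the proof of Proposition \ref{prop:11ODEs}, and substitutes into \eqref{eqn:curvature} to get $F_\alpha=0$; your bracket arithmetic is exactly the calculation the paper omits, and the identification of the bundle $P_1$ via the value $g(0)=1$ is a correct consistency check. One correction to your closing remark, though: $M_{1,1}$ is simply connected, so a flat connection on it has trivial holonomy and is gauge equivalent to the trivial connection under arbitrary gauge transformations. The reason $A_1^{\pm}$ is listed as a genuine solution rather than discarded is not holonomy but the restriction to $\G$-invariant gauge transformations (those constant on each principal orbit, i.e.\ maps $\R^+\to\SU(2)$), under which $A_1^{\pm}$ is not equivalent to the canonical invariant connection.
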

\begin{proof}
We can take the constant solutions $(f,f',g,h) = (\pm1,0,1,\frac{1}{2})$ to the ODE system \eqref{eq:G2ode11}. Subsituting the corresponding values of $\alpha_i$, $\alpha_i'$ into \eqref{eqn:curvature} yields $F_{\alpha}=0$. The resulting connection forms are $A_1^{\pm}$.
\end{proof}

It is easy to show that the instantons of Proposition \ref{prop:flatsols11}, together with the canonical invariant connection, are the only invariant flat instantons on $M_{1,1}$. 

\begin{remark}\label{rem:gaugetrans}
We note that some of these solutions are gauge equivalent under an $\G$-invariant gauge transformation. Indeed, the element
\[\left(\begin{array}{cc} i & 0 \\ 0 & -i \end{array}\right)\in\SU(2)\]
defines a group action of $\SU(2)$ on itself by conjugation. The derivative of this action is the adjoint representation, which acts on $\su(2)$ by
\[E_1\mapsto -E_1, \hspace{1cm} E_2\mapsto -E_2, \hspace{1cm} E_3\mapsto E_3.\]
The $G_2$-instanton equations \eqref{eq:G2ode11} are invariant under this gauge transformation and $A^+_1$ and $A^-_1$ are gauge equivalent.
\end{remark}

\begin{remark}
The instantons of Proposition \ref{prop:flatsols11} do not depend on the functions $a$ and $b$, since the coefficients of $\Phi_1$, $\chi_1$ and $\Psi_1$ vanish independently of each other. Hence they also define invariant flat $G_2$-instantons on $M_{1,1}$ equipped with the torsion free ALC $G_2$-structures given in case (v) of Theorem \ref{thm:class}.
\end{remark}

\subsubsection{Abelian solutions}

When $M$ is a compact $G_2$-manifold, every class in $H^2(M;\R)$ has a harmonic representative; when $M$ is non-compact, this is not necessarily true. Proposition 4.57 of \cite{KL19} states that the space of $L^2$-integrable closed and coclosed 2-forms on $M$ is isomorphic to $H_{cs}^2(M;\R)$.
Poincar\'e Duality tells us that for $M_{1,1}$, this space is 1-dimensional and so we would expect to find a unique $L^2$-integrable abelian instanton $A$ on each U(1)-bundle on $M_{1,1}$.

Abelian solutions are given by morphisms $\Lambda:(\R\oplus\C_{2})\oplus(\R\oplus\C_{-2})\to\R$ of $\Z_{4}$-representations; such maps must vanish on the complex components, corresponding to $f=f'=0$ in \eqref{eq:A}. We start by noticing that the $G_2$-instanton equations \eqref{eq:G2ode11} with vanishing $f$ and $f'$ are uncoupled. Hence, we solve them in terms of the functions $a$ and $b$, with initial conditions given by Proposition \ref{prop:11j}. The general solutions for $g$ and $h$ are
\begin{align}
\label{eqn:11gab}
g(t)&=\frac{4jr_0^6}{(b+r_0^3)^2},   \\
\label{eqn:11hab}
h(t) &= h_0\exp\left(\int_0^t\frac{2\dot{b}(b-r_0^3)}{4a^2-(b-r_0^3)^2}d\tau\right).
\end{align}

By taking $h_0=0$ on $P_j$, we get a particular solution
\[\Aab=\frac{1}{2}gE_3\otimes(e_3-e_3')\]
with $L^2$-bounded curvature, where $g$ is given by (\ref{eqn:11gab}). This abelian instanton will play a role in the construction of $G_2$-instantons with gauge group $\SU(2)$. For this choice of $h_0$, $g$ decays at infinity so the connection $\Aab$ is asymptotic to the flat instanton on $P_j$ known as the canonical invariant connection, as defined in Section \ref{sec:hombund}.

\subsection{Solutions via a Dynamical Systems Approach}\label{sec:dyn}

We now construct solutions to the $G_2$-instanton equations which have full gauge group $\SU(2)$ and are perturbations of an abelian solution near the singular orbit. We begin by rescaling time so that, using the expansions of $a$ and $b$ given in Proposition \ref{prop:5.3}, we can rewrite  \eqref{eq:G2ode11} as a sum of autonomonous and non-autonomous parts. Denote $z=(f,f',g,h)$. Rescaling time as $t(\tau)=\exp(\tau)$ yields $\frac{dz}{d\tau} = \dot{z} \frac{dt}{d\tau} = \dot{z} \exp(\tau)$; hence, we can write
\begin{equation}\label{eq:defF}
  \frac{dz}{d\tau} =  F(z)+G(z,\tau),
\end{equation}
where
\begin{equation}\label{eq:F}
  F(z)=
  \begin{pmatrix}
          2 f'(2-3h+\frac{1}{2}g) + 2 f(g-1) \\
         2  f(2-3h-\frac{1}{2}g) - 2 f' (g+1) \\
         6 (f^2-(f')^2-g) \\
         2h -  (f')^2 -f^2 -4 f f'
       \end{pmatrix}
\end{equation}
and
\begin{equation}\label{eq:G}
  G(z,\tau)= 36\sqrt{3}r_0^3(\exp(\tau))^{-3}
  \begin{pmatrix}
          f'(h-1-\frac{1}{2}g) \\
         f(h+\frac{1}{2}g-1) \\
         6 ((f')^2-f^2+g) \\
          \frac{1}{2} (f')^2 +\frac{1}{2} f^2-h
       \end{pmatrix}+O(\exp(\tau)^{-6}).
\end{equation}
Then the non-autonomous part of \eqref{eq:defF} satisfies
\begin{align}\label{eq:Glim}
   & \lim_{\tau \to \infty } \exp(\tau) G(z,\tau) =0  \\
   & \lim_{\tau \to \infty } \exp(\tau) D_1G(z,\tau) =0.\label{eq:DGlim}
\end{align}

\begin{remark}
The properties \eqref{eq:Glim} and \eqref{eq:DGlim} no longer hold when we consider the ALC members of the $\Csev$ family, since the autonomous system corresponds to instantons on the cone. This means that the results which follow cannot be extended outside the AC limit.
\end{remark}

\subsubsection{Autonomous Dynamics and Steady States}

We start by analysing the dynamical systems behaviour of the truncated autonomous ODE
\begin{equation}\label{eq:ODEF}
  \dot{z}=F(z)
\end{equation}
with flow $\Phi(\cdot,\tau)$. Subsequently, we then show how certain solutions of the autonomous ODE persist in the full system \eqref{eq:defF}.

It is easy to see that \eqref{eq:ODEF} has steady states
\begin{gather*}
  z_0=(0,0,0,0), \quad z_+=\left(\frac{1}{3},\frac{1}{3},0,\frac{1}{3}\right), \quad z_-=\left(-\frac{1}{3},-\frac{1}{3},0,\frac{1}{3}\right).
\end{gather*}
The fixed points $z_+$ and $z_-$ are interchanged by the gauge transformation described in Remark \ref{rem:gaugetrans}. We calculate the respective linearisations at the steady states and their eigenvalues and eigenvectors; the results are given in Table \ref{tab:fix}.

\begin{table}[h]
    \centering
\begin{tabular}{|l|l|l|l|}
\hline
Fixed Point $z$ & $DF(z)$ & Eigenvalues & Eigenvectors \\ \hline
       $z_0$         &    $\left( \begin {array}{cccc} -2&4&0&0\\ \noalign{\medskip}4&-2&0&0
\\ \noalign{\medskip}0&0&-6&0\\ \noalign{\medskip}0&0&0&2
\end {array} \right)$     &   $\{-6,-6,2,2 \}$          &   $\left( \begin {array}{cccc} 0&-1&0&1\\ \noalign{\medskip}0&1&0&1
\\ \noalign{\medskip}1&0&0&0
\\ \noalign{\medskip}0&0&1&0\end {array} \right)$           \\ \hline
         $z_+$       &    $\left( \begin {array}{cccc} -2&2&1&-2\\ \noalign{\medskip}2&-2&-1&-2\\ \noalign{\medskip}4&-4&-6&0\\ \noalign{\medskip}-2&-2&0&2\end {array} \right)$     &    $\{4,-8,-2,-2 \}$         &           $\left( \begin {array}{cccc} -1&-1&1&1\\ \noalign{\medskip}-1&1&1&-1
\\ \noalign{\medskip}0&4&0&2
\\ \noalign{\medskip}2&0&1&0\end {array} \right)$   \\ \hline
         $z_-$       &  $\left( \begin {array}{cccc} -2&2&-1&2\\ \noalign{\medskip}2&-2&1&2\\ \noalign{\medskip}-4&4&-6&0\\ \noalign{\medskip}2&2&0&2\end {array} \right)$       &      $\{4,-8,-2,-2 \}$       &        $\left(\begin {array}{cccc} 1&1&-1
&-1\\ \noalign{\medskip}1&-1&-1&1
\\ \noalign{\medskip}0&4&0&2
\\ \noalign{\medskip}2&0&1&0\end {array} \right)$      \\ \hline
\end{tabular}
\caption{The linearisation, eigenvalues and corresponding matrix of eigenvectors for each fixed point $z_0$, $z_{\pm}$.}
\label{tab:fix}
\end{table}

The sets of eigenvalues together show that all steady states are hyperbolic saddles, which implies the existence of respective stable and unstable manifolds, tangential to the stable and unstable eigenspaces. For $z_0$, both the stable and unstable manifolds are two dimensional. For $z_+$ and $z_-$, the stable manifold is three dimensional and the unstable manifold is one dimensional.

\subsubsection{Constituent solutions as building blocks}\label{sec:build}

We now consider a couple of solutions to the full system which we use as building blocks for a gluing process, to form solutions which initially move towards the fixed point $z_0$ before travelling off to one of the fixed points $z_{\pm}$. We start by describing the subspaces of the 4-dimensional phase space which are invariant under the flow, and the solutions which lie inside these invariant spaces.

The plane $\Pi_1=\{f=f'=0\}$ is invariant under both \eqref{eq:defF} and \eqref{eq:ODEF}. Solutions which live in this plane for all time are abelian and fit into the family given by \eqref{eqn:11gab} and \eqref{eqn:11hab}; recall that for each bundle $P_j$, we have a solution \begin{equation}\label{eq:Aab}\Aab=\frac{2jr_0^6}{(b+r_0^3)^2}E_3\otimes(e_3-e_3').\end{equation}
This solution traces out a segment of the invariant line $\ell_1=\text{span}\{(0,0,1,0)\}$, which is the stable manifold of the saddle point $(0,0)$ of the reduced system in $\Pi_1$. We have $g(\tau) \to 0 \mbox{ as } \tau \to \infty$ and we can find a time $T$ such that $|g(T)| <\delta$ for $\delta>0$.

\begin{remark}
When $m$ and $n$ are distinct integers, the line $\ell_1=\text{span}\{(0,0,m+n,r-s)\}\subset\Pi_1$ is not invariant under \eqref{eq:defF}. Together with the absence of an explicit abelian solution, these are the reasons that we cannot immediately apply what follows to $M_{m,n}$ for general positive coprime $m$ and $n$. 
\end{remark}

Another invariant plane of both \eqref{eq:defF} and \eqref{eq:ODEF} is $\Pi_2=\{f=f',g=0\}$. In this plane, the autonomous equation \eqref{eq:ODEF} reduces to
\begin{align*}
  \dot{f}=\dot{f'} & = 2 f - 6h f, \\
  \dot{g} & =0, \\
  \dot{h} & = 2 h -6 f^2.
\end{align*}
We note that $\Pi_2$ contains the steady states $z_0$ and $z_{\pm}$, as well as all of their unstable manifolds. Within this plane, the lines
$\ell_{\pm}=\{f=f'= \pm h\}$ are also invariant under \eqref{eq:ODEF}, which then reduces to the single equation
\begin{equation}\label{eq:lineode}
  \dot{h}= 2h - 6 h^2.
\end{equation}
There is an explicit solution
\[h(\tau) = \frac{\exp(2\tau)}{1+3\exp(2\tau)}\]
and up to time translation, this is the unique solution with image $(0,\frac{1}{3})$.
It yields a solution for \eqref{eq:ODEF}, namely
 \begin{equation} \label{eq:hode}
   z(\tau)= (f,f',g,h)(\tau)  = \left(\pm \frac{\exp(2\tau)}{1 + 3 \exp(2\tau)},\pm \frac{\exp(2\tau)}{1 + 3 \exp(2\tau)}, 0 , \frac{\exp(2\tau)}{1 + 3 \exp(2\tau)}\right).
 \end{equation}
Up to time translation, this is the unique solution lying in $\ell_{\pm}$ that satisfies
\[z(\tau)\xrightarrow{\tau\to-\infty} z_0 \text{ and } z(\tau)\xrightarrow{\tau\to\infty} z_{\pm}.\]
It also lies in the intersections $W^u(z_0) \cap W^s(z_+)$  and $W^u(z_0) \cap W^s(z_-)$ respectively. For $\tau \to \infty$, the stable manifolds $W^s(z_\pm)$ are tangential to their respective stable eigenspaces; each of these spaces, together with the unstable eigenvector contained in the plane $\Pi_2$, span the whole of $\R^4$. At any point $z(\tau)$ in \eqref{eq:hode}, \[T_{z(\tau)}W^u(z_0)=\Pi_2,\] 
while the tangent space of $W^s(z_{\pm})$ at $z_{\pm}$ is the direct sum of $\ell_{\pm}$ and a plane transverse to $\Pi_2$. If the tangent space to $W^s(z_{\pm})$ contains $\Pi_2$ at any point on the flow line, then it will contain $\Pi_2$ everywhere along the flow line. With these observations, we see that the intersections $W^u(z_0) \cap W^s(z_+)$  and $W^u(z_0) \cap W^s(z_-)$ are transversal along $z(\tau)$ for all $\tau$. 
 
We now look to perturb the solutions \eqref{eq:hode} of the autonomous system to solutions of the full non-autonomous system. In this setting, the stable manifold now depends on the choice of initial time; let $\phi^{\tau_0}(\tau,z_0)$ be the associated flow to the ODE \eqref{eq:defF} with $z(\tau_0)=z_0$. For each $\tau_0$, we have a stable manifold \[W^s_{\tau_0}(z_{\pm})=\{z_0\in\R: \lim_{\tau\to\infty}\phi^{\tau_0}(\tau,z_0)= z_{\pm}\}.\] In order to perturb the solutions of the autononomous system, we show that such a perturbation will not affect the existence of the stable manifold of $z_{\pm}$ for sufficiently large start times $\tau_0=T$ and that these perturbed stable manifolds will still intersect the unstable manifold of $z_0$ in the same way for large $T$.

The construction of the local stable manifolds from a contraction mapping argument in Theorem 9.3 of \cite{T12} shows that small enough perturbations will not change the existence of the stable manifold. Indeed, adding the non-autonomous function $G$ to the existing contraction still yields a contraction for a sufficiently large start time; see Appendix \ref{sec:dynsys} for more details. This contraction will have a unique fixed point that defines the stable manifold which will still be tangent to the same stable eigenspace.

From the local stable manifolds of $z_\pm$, it only takes a finite time to reach the intersection points constructed above. By smooth dependence on initial conditions and perturbations, we have that the perturbed stable manifolds $W^s_{\tau_0}(z_\pm)$ are $C^1$-close to the original stable manifolds near $z_{\pm}$ for all large $\tau_0=T$, due to \eqref{eq:Glim} and \eqref{eq:DGlim}.

\begin{remark}
The solutions \eqref{eq:hode} solve the $G_2$-instanton equations on the $G_2$-cone, when we translate back into the original time. We remark that it is feasible that a global solution could be constructed by gluing the pull-back of this solution on the cone by the diffeomorphism giving the AC structure to a perturbation of the abelian solution \eqref{eq:Aab}. However, we instead apply in the following section a dynamical systems argument to obtain such a global solution.
\end{remark}

\subsubsection{Global solutions which extend smoothly over the singular orbit} \label{ssec:glue}

We now state the main theorem which gives the existence of solutions to the $G_2$-instanton equations with gauge group $\SU(2)$ that are formed using the building blocks from Section \ref{sec:build}.

\begin{theorem}\label{thm:main}
Let $M_{1,1}$ be the AC $G_2$-manifold of Theorem \ref{thm:7.1} and let $\Aab$ be the unique abelian solution \eqref{eq:Aab} on each bundle $P_j$ over $M_{1,1}$ for $j\in\Z$ odd. There is a function $(-\epsilon,\epsilon)\to\R,f_0\mapsto h_0(f_0)$ such that the unique local solution with initial condition $(f_0, h_0(f_0))$ given by Proposition \ref{prop:11j}, which is a small perturbation of $\Aab$ near the singular orbit $S^2\times S^3$, extends to a global solution. Thus we obtain a 1-parameter family of $\G$-invariant $G_2$-instantons with full gauge group $\textup{SU}(2)$ and bounded curvature. These instantons are asymptotic to the pull-back to the cone of the nearly K\"ahler instanton on $S^3\times S^3/\Z_4$.
\end{theorem}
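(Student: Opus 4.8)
The plan is to prove Theorem~\ref{thm:main} by a gluing/shooting argument in the phase space of \eqref{eq:defF}, combining the local solutions near the singular orbit from Proposition~\ref{prop:11j} with the perturbed stable manifolds of $z_{\pm}$ from Section~\ref{sec:build}. The strategy is to parametrise the family of smooth local solutions by $f_0$ (having fixed the abelian one $h_0=0$ as a reference), flow them forward, and show that for each small $f_0$ one can adjust $h_0$ so that the trajectory lands on the perturbed stable manifold $W^s_{T}(z_+)$, which forces convergence to $z_+$ at the conical end; translating $z_+$ back through Proposition~\ref{prop:5.3} identifies the limit as the canonical Hermitian (nearly K\"ahler) connection on $S^3\times S^3/\Z_4$, and the decay rates give bounded curvature.

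First I would set up the shooting map. By Proposition~\ref{prop:11j} the local solutions on $P_j$ form a $2$-parameter family $A_{j}(f_0,h_0)$; the abelian solution $\Aab$ corresponds to $f_0=0$, $h_0=0$, and it lies on the invariant line $\ell_1\subset\Pi_1$, which is precisely the stable manifold of $(0,0)$ inside the reduced system on $\Pi_1$. So for $f_0=0$ the trajectory already flows to $z_0$. The linearisation at $z_0$ (Table~\ref{tab:fix}) has a $2$-dimensional stable manifold $W^u(z_0)$ has dimension $2$ and is tangent to the $f$-$h$ directions; crucially $\Pi_2=\{f=f',g=0\}$ contains $W^u(z_0)$ and the connecting orbits \eqref{eq:hode}. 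The idea is that as $f_0$ is turned on, the trajectory first approaches a neighbourhood of $z_0$ (because the $g$- and $f'-f$-type directions are strongly contracting, eigenvalues $-6$), then is ejected along the unstable manifold of $z_0$, which generically runs close to the connecting orbit $z(\tau)$ of \eqref{eq:hode} and hence close to $W^s_{T}(z_+)$.

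The heart of the argument is then an implicit-function / transversality step. Near $z_0$ I would use a Shil'nikov-type or local-coordinates analysis: decompose the flow near $z_0$ into stable and unstable components, track the entry point (depending on $(f_0,h_0)$) on a cross-section transverse to $W^s(z_0)$, and track the exit point on a cross-section transverse to $W^u(z_0)$; the passage map is well-defined and $C^1$ by hyperbolicity. Composing with the (finite-time, hence $C^1$) flow from the exit section to a fixed cross-section $\Sigma$ near $z_+$ transverse to $\ell_+$ gives a $C^1$ map $(f_0,h_0)\mapsto \pi_{\mathrm{us}}\big(\text{hit of }\Sigma\big)$ measuring the displacement off $W^s_{T}(z_+)$ in the one unstable direction at $z_+$. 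At $f_0=0$ this displacement vanishes (the trajectory goes to $z_0$, and one checks the relevant component is zero); and the transversality of $W^u(z_0)\pitchfork W^s(z_+)$ along $z(\tau)$ established in Section~\ref{sec:build}, which is stable under the $C^1$-small non-autonomous perturbation $G$ by \eqref{eq:Glim}--\eqref{eq:DGlim}, guarantees that $\partial_{h_0}$ of this displacement is nonzero at $(0,0)$. The implicit function theorem then yields the desired $C^1$ function $f_0\mapsto h_0(f_0)$ on $(-\epsilon,\epsilon)$, and along $h_0(f_0)$ the trajectory lies on $W^s_{T}(z_+)$, hence $\lim_{\tau\to\infty}z(\tau)=z_+$. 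Reflecting by the gauge transformation of Remark~\ref{rem:gaugetrans} handles $z_-$; smoothness over the singular orbit is inherited from Proposition~\ref{prop:11j}, and since $z_+$ has $f=f'=\tfrac13,g=0,h=\tfrac13$ the curvature stays bounded and the connection is asymptotic on the cone to the claimed nearly K\"ahler instanton.

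The main obstacle I anticipate is the \emph{matching at $z_0$}: making precise that the $f_0$-family of local solutions, after its long contracting approach to $z_0$, actually exits along (a $C^1$-small perturbation of) the connecting orbit $z(\tau)$ rather than some other branch of $W^u(z_0)$, and that the resulting displacement function depends on $h_0$ nondegenerately uniformly as $f_0\to 0$. This requires careful uniform estimates on the passage map near the saddle $z_0$ (a $\lambda$-lemma / inclination-lemma argument) together with control of where the $f_0=0$ orbit — the abelian solution on $\ell_1$ — sits relative to $W^s(z_0)$, and propagating the transversality of Section~\ref{sec:build} through the non-autonomous perturbation with the decay \eqref{eq:Glim}--\eqref{eq:DGlim}. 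Everything else (existence and $C^1$-dependence of $W^s_{\tau_0}(z_\pm)$, finite-time flow regularity, the identification of $z_\pm$ with the nearly K\"ahler instanton via Proposition~\ref{prop:5.3}) is either already in the excerpt or routine.
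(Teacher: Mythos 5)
Your strategy coincides with the paper's: flow the $2$-parameter family of local solutions from Proposition~\ref{prop:11j} forward, pass near $z_0$, and connect to $W^s(z_\pm)$ via the transversal heteroclinic intersection established in Section~\ref{sec:build}, using an Inclination/$\lambda$-lemma for the passage and the persistence of stable manifolds under the non-autonomous perturbation $G$. However, there is a genuine gap exactly at what you yourself call the ``main obstacle,'' and the missing step is not routine. For the Inclination Lemma (or any passage analysis near $z_0$) to apply, you must first show that the image under the flow of the $2$-parameter family of initial data is a disc \emph{transverse} to $W^s(z_0)$ at a point of $\ell_1$. Your heuristic that ``the $g$- and $f'-f$-type directions are strongly contracting'' does not establish this: it is perfectly consistent with strong contraction that the direction $\partial_{f_0}Z(t,Z_0)$ collapse onto the stable eigenspace as $t$ grows, in which case the orbit would track $W^s(z_0)$ itself rather than escape along the heteroclinic. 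The paper proves transversality (the Lemma of Section~\ref{sec:stage2}) by linearising around $\Aab$ and exhibiting an invariant region $\mathcal{R}=\{f\geq f',\,f\geq -2f'\}\cup\{f\leq f',\,f\leq -2f'\}$ for the linearised flow, trapping $N(t,(1,0))$ away from the stable eigenvector $(-1,1,0,0)$; the invariance of $\mathcal{R}$ is not formal but rests on the pointwise inequalities $\Phi_1>0$, $8\chi_1-5(\Phi_1+\Psi_1)>0$ and $\Phi_1+\Psi_1+2\chi_1>0$, which come from the global constraints on the metric functions $a,b$ recorded in Remark~\ref{rem:abcond}. That estimate is the heart of Stage~2 and cannot be deferred as an anticipated obstacle.

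A secondary issue is with your implicit-function framing. At $f_0=0$ the orbit converges to $z_0$ and never reaches a fixed cross-section $\Sigma$ near $z_+$: the time of flight diverges as $f_0\to 0$, so the ``displacement off $W^s_T(z_+)$'' is not defined (let alone $C^1$) at the base point $(0,0)$ where you propose to apply the IFT, and the transversality $W^u(z_0)\pitchfork W^s(z_+)$ alone does not give $\partial_{h_0}$ of such a displacement. The paper sidesteps this by arguing at the level of manifolds: the Inclination Lemma gives, for large $T_2$, a $2$-disc $C^1$-close to $W^u(z_0)$; transversality then gives a $1$-dimensional intersection with the codimension-one $W^s_{T_2}(z_\pm)$; and the final lemma shows this curve projects injectively to the $f_0$-axis, using that $N(T,(0,1))$ is proportional to $(0,0,0,1)\notin T_{z_\pm}W^s(z_\pm)$. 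If you insist on a shooting/IFT formulation, you would need to replace the naive section map by a $\lambda$-lemma-quantified passage map uniform in $f_0\to 0$, which amounts to the same estimates the paper carries out.
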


We prove this theorem via the following argument. \vspace{-0.4cm}
\begin{enumerate}
\item[\textup{(i)}] We show that a 2-parameter family of initial conditions is mapped at some time $T_1$ under the flow to a disc that intersects the stable manifold of $z_0$ transversely, by considering the linearisation of the ODE system around the abelian solution $\Aab$. 
\item[\textup{(ii)}] We also show that such a disc is mapped at some time $T_2>T_1$ under the flow to be $C^1$ close to the unstable manifold of $z_0$. Since we showed that this intersection is 1-dimensional in Section \ref{sec:build}, we see that the image of this disc under the flow intersects the stable manifolds of $z_{\pm}$ along a curve. 
\item[\textup{(iii)}] Together this yields a 1-parameter family of global solutions.
\end{enumerate}

In what follows, we choose to swap the order in which we prove the theorem. Stage 1 will address part (ii) of the argument on the interval $(T_2,\infty)$ while Stage 2 concerns part (i) on the interval $(0,T_1)$. We will use the following schematic diagram, shown in Figure \ref{fig:schemeintro}, to illustrate the components of the solutions (iii).

\tikzset{
  mynode/.style={fill,circle,inner sep=2pt,outer sep=0pt}
}

\begin{figure}[h]
\centering
\begin{tikzpicture}
\draw[black,thick] (0,0) -- (6,0)
    node[pos=0,mynode,fill=black,label=below:\textcolor{black}{Initial Conditions}]{}
    node[pos=1,mynode,fill=black,text=blue, label=below:\textcolor{black}{$z_0$}]{}
    node[pos=0.5,label=below:\textcolor{black}{$\Aab$}]{};
    \draw[black,thick] (6,0) -- (10,2)
    node[pos=1,mynode,fill=black,text=blue, label=below:\textcolor{black}{$z_+$}]{}
    node[pos=0.5,label=below:\textcolor{black}{$\ell_+$}]{};
    \draw[black,thick] (6,0) -- (10,-2)
    node[pos=1,mynode,fill=black,text=blue, label=below:\textcolor{black}{$z_-$}]{}
    node[pos=0.5,label=below:\textcolor{black}{$\ell_-$}]{};
    \draw[black,thick,->] (0,-3) -- (10,-3)
    node[pos=0,mynode,fill=black,text=blue, label=below:\textcolor{black}{$t=0$}]{}
    node[pos=0.4,mynode,label=below:\textcolor{black}{$T_1$}]{}
    node[pos=0.55,mynode,label=below:\textcolor{black}{$T_2$}]{};
    \draw [purple] plot [smooth, tension=0.3] coordinates { (0,0.2) (6,.3) (10,2)};
\end{tikzpicture}
\caption{Schematic diagram showing the components of the 1-parameter family of solutions of Theorem \ref{thm:main}.}\label{fig:schemeintro}
\end{figure}
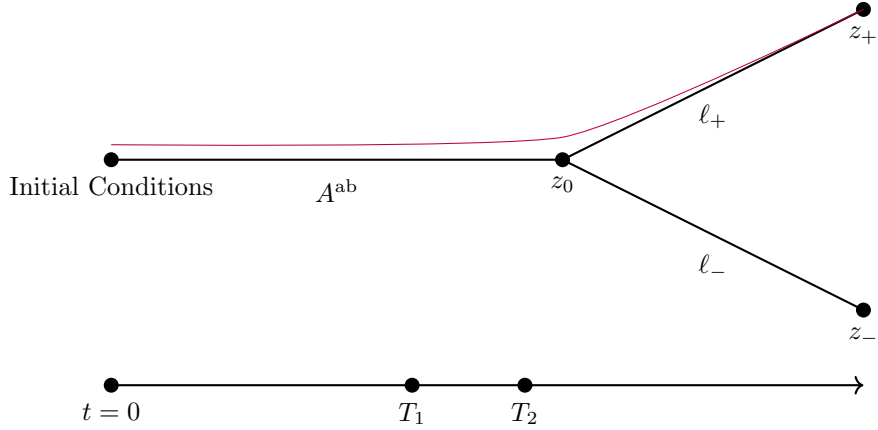

The diagram shows a solution (in purple) tending to $z_+$ as $t\to\infty$, evolving with time from left to right. The first part of the solution is shown to be a perturbation of the abelian solution $\Aab$, shown in the diagram as a straight line from the initial conditions to $z_0$. The solution is seen to bypass the point $z_0$ before tending to the fixed point $z_+$. 

\subsubsection{Stage 1}\label{sec:stage1}

As before, we only consider the case where $j\geq1$. We show that there are solutions to \eqref{eq:defF}, which initially closely follow \eqref{eq:Aab} and then travel along perturbed solutions of \eqref{eq:hode}. We appeal to the Inclination Lemma, which gives a convergence result for transversal manifolds.

\begin{lemma}[The Inclination Lemma, \cite{MP12} Lemma 7.1] \label{lemma:inc}
Let $B^s, B^u$ be balls contained in the local stable and unstable manifolds, respectively, of a hyperbolic fixed point $0$; set $V=B^s\times B^u$. Consider a point $q$ in the local stable manifold, and a disc $D^u$ of the same dimension as the local unstable manifold which is transversal to the local stable manifold at the point $q$. Let $D^u_t$ be the connected component of $V\cap \Phi(t, D^u)$ to which $\Phi(t,q)$ belongs. Given $\epsilon>0$, there exists $T\in\R$ such that if $t>T$, then $D^u_t$ is $\epsilon$ $C^1$-close to $B^u$.
\end{lemma}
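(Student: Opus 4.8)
The plan is the standard graph‑transform (Lambda Lemma) argument, reduced to a hyperbolic linear model. First I would pass from the flow to the time‑$\tau_0$ map $f:=\Phi(\tau_0,\cdot)$ for a fixed $\tau_0>0$ large enough that the spectral splitting of $Df(0)$ into eigenvalues of modulus $<1$ (spanning $E^s$) and $>1$ (spanning $E^u=\R^n/E^s$) is comfortable. By the stable/unstable manifold theorem $W^s_{loc},W^u_{loc}$ are $C^1$ graphs vanishing to first order at $0$ and meeting transversally only there, so a $C^1$ change of coordinates $\Psi$ with $D\Psi(0)=\mathrm{id}$ straightens both simultaneously: in a box $V=B^s\times B^u$ one has $W^s_{loc}=B^s\times\{0\}$ and $W^u_{loc}=\{0\}\times B^u$. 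Choose an adapted (box) norm. Writing $f=(f_s,f_u)$ and $Df=\begin{pmatrix}A&B\\C&D\end{pmatrix}$, invariance of $W^s_{loc}$ gives $f_u(x,0)\equiv 0$ (so $C(x,0)\equiv 0$) with $x\mapsto f_s(x,0)$ contracting, and invariance of $W^u_{loc}$ gives $f_s(0,y)\equiv 0$ (so $B(0,y)\equiv 0$) with $y\mapsto f_u(0,y)$ expanding. Shrinking $V$ we get $\|A\|\le a<1$, $D$ invertible with $\|D^{-1}\|\le d$, $ad<1$, and $\|B\|,\|C\|\le\varepsilon_0$ arbitrarily small, together with the quantitative versions $\|B(x,y)\|\le L_0\|x\|$, $\|C(x,y)\|\le L_0\|y\|$.

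\textbf{The tracked component is eventually a graph with slope in the unstable cone.} Since $q\in W^s_{loc}$, the orbit $q_k:=f^k(q)$ stays in $V$ and $q_k\to 0$. Transversality of $D^u$ to $W^s_{loc}=\{y=0\}$ together with $\dim D^u=\dim E^u$ means that near $q$ the disc is a graph $x=\gamma_0(y)$ over a neighbourhood of $0$ in $E^u$, with some finite slope $S$. Let $D^u_k$ denote the connected component of $f^k(D^u)\cap V$ containing $q_k$; I would maintain by induction that $D^u_k$ is a graph $\gamma_k:U_k\to E^s$ whose tangent planes, written as graphs of linear maps $L:E^u\to E^s$, satisfy $\|L\|\le S_k$. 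The induced graph transform on tangent planes is $L\mapsto (AL+B)(CL+D)^{-1}$, giving $\|L'\|\le \dfrac{a\|L\|+\varepsilon_0}{d^{-1}-\varepsilon_0\|L\|}$; for $\varepsilon_0$ small (using $ad<1$) this maps $\{\|L\|\le 1\}$ into itself and acts there as a contraction with a small additive term, so $S_k\le 1$ for all $k\ge k_0$ and stays so. Meanwhile the $E^u$‑expansion ($\|D^{-1}\|\le d<1$) applied to graphs of slope $\le 1$ expands the footprint $U_k$ by a definite factor around the base point $q_k$ until $U_k\supseteq B^u$; hence (enlarging $k_0$) $U_k\supseteq B^u$ and $\mathrm{slope}(\gamma_k)\le 1$ for all $k\ge k_0$.

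\textbf{$C^0$‑ and $C^1$‑decay.} For $k\ge k_0$ the identity $f_s(0,y)\equiv 0$ gives, for every $y\in B^u$, $\|f_s(\gamma_k(y),y)\|=\|f_s(\gamma_k(y),y)-f_s(0,y)\|\le a\|\gamma_k(y)\|$, hence $\|\gamma_{k+1}\|_{C^0}\le a\|\gamma_k\|_{C^0}$ and $\gamma_k\to 0$ uniformly; this is exactly the $C^0$ (Hausdorff) part of the conclusion, $D^u_k\to B^u$. Feeding this back into the tangent‑plane transform, the additive error is now controlled by $\|B\|\le L_0\|\gamma_k\|_{C^0}\to 0$ along $D^u_k$, so $\|L_{k+1}\|\le \dfrac{a\|L_k\|+L_0\|\gamma_k\|_{C^0}}{d^{-1}-\varepsilon_0}$ forces $\sup_{D^u_k}\|L\|\to 0$; this is the $C^1$ part — the tangent planes converge uniformly to $E^u=TB^u$. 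Therefore $D^u_k$ is $\varepsilon$‑$C^1$‑close to $B^u$ once $k$ is large.

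\textbf{From discrete to continuous time.} Finally, writing $t=k\tau_0+r$ with $0\le r<\tau_0$, one has $\Phi(t,\cdot)=\Phi(r,\cdot)\circ f^k$; on the compact interval $r\in[0,\tau_0]$ the maps $\Phi(r,\cdot)$ are uniformly $C^1$ and fix $0$, so $C^1$‑closeness of $f^k(D^u)$ to $B^u$ passes to $\Phi(t,D^u)$ with a uniformly bounded distortion, and undoing the fixed $C^1$ diffeomorphism $\Psi$ only rescales $\varepsilon$ by a constant. Choosing $k$ (hence $T$) large gives the claim for all $t>T$. I expect the main obstacle to be the bookkeeping of the previous paragraph‑two step: simultaneously propagating the graph property to the \emph{correct} connected component $D^u_k$, getting its tangent planes into the unstable cone when only finite initial slope is assumed, and checking the footprint $U_k$ eventually engulfs $B^u$ — all while the base orbit $q_k$ is still only near, not at, the origin. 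The structural fact that makes everything close up is the invariance identity $f_s(0,\cdot)\equiv 0$ (equivalently $B(0,\cdot)\equiv 0$), which is what turns the additive error in the slope transform from merely small into $o(1)$.
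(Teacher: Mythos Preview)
The paper does not prove this lemma: it is simply quoted as Lemma 7.1 from \cite{MP12} (Palis--de Melo) and used as a black box in the argument of Section \ref{sec:stage1}. So there is no ``paper's own proof'' to compare against.

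That said, your sketch is the standard graph-transform (Lambda Lemma) argument, essentially the one given in Palis--de Melo: linearise to a box around the hyperbolic fixed point, represent the transversal disc as a graph over $E^u$, iterate the time-$\tau_0$ map, and show that the slope contracts while the footprint expands. The key structural input you identify --- that $f_s(0,\cdot)\equiv 0$ forces the additive term in the slope recursion to vanish as $\gamma_k\to 0$ --- is exactly what drives the $C^1$ convergence. Your own caveat about the bookkeeping in the second step (tracking the correct connected component, getting large initial slopes into the cone, and engulfing $B^u$) is accurate: these are the genuinely fiddly parts of the full proof, and your sketch treats them at the level of ``this works for $\varepsilon_0$ small enough'' rather than giving the explicit inequalities. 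For the purposes of this paper that level of detail would be more than sufficient, since the authors do not attempt a proof at all.
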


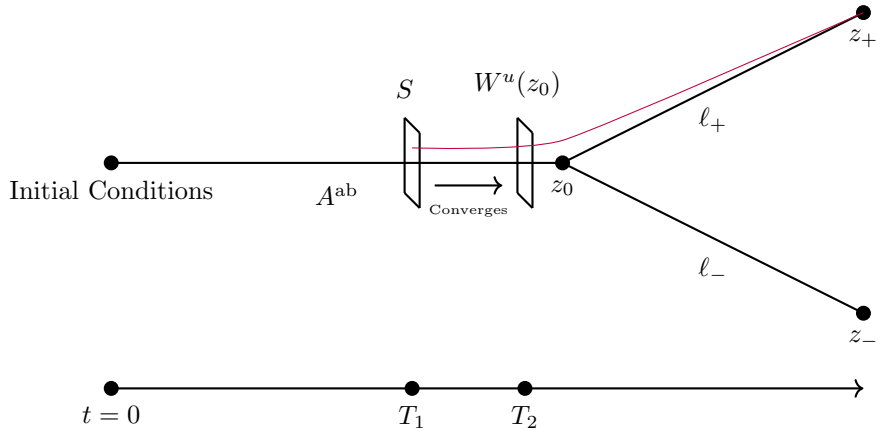
\begin{figure}[h]
\centering
\begin{tikzpicture}
\draw[black,thick] (0,0) -- (6,0)
    node[pos=0,mynode,fill=black,label=below:\textcolor{black}{Initial Conditions}]{}
    node[pos=1,mynode,fill=black,text=blue, label=below:\textcolor{black}{$z_0$}]{}
    node[pos=0.5,label=below:\textcolor{black}{$\Aab$}]{};
    \draw[black,thick] (6,0) -- (10,2)
    node[pos=1,mynode,fill=black,text=blue, label=below:\textcolor{black}{$z_+$}]{}
    node[pos=0.5,label=below:\textcolor{black}{$\ell_+$}]{};
    \draw[black,thick] (6,0) -- (10,-2)
    node[pos=1,mynode,fill=black,text=blue, label=below:\textcolor{black}{$z_-$}]{}
    node[pos=0.5,label=below:\textcolor{black}{$\ell_-$}]{};
    \draw[black,thick,->] (0,-3) -- (10,-3)
    node[pos=0,mynode,fill=black,text=blue, label=below:\textcolor{black}{$t=0$}]{}
    node[pos=0.4,mynode,label=below:\textcolor{black}{$T_1$}]{}
    node[pos=0.55,mynode,label=below:\textcolor{black}{$T_2$}]{};
    \draw[black,thick] (3.9,-0.4) -- (3.9,0.6)
    node[pos=1,label=above:\textcolor{black}{$S$}]{};
    \draw[black,thick] (4.1,-0.6) -- (4.1,0.4);
    \draw[black,thick] (3.9,-0.4) -- (4.1,-0.6);
    \draw[black,thick] (3.9,0.6) -- (4.1,0.4);
    \draw[black,thick] (5.4,-0.4) -- (5.4,0.6)
    node[pos=1,label=above:\textcolor{black}{$W^u(z_0)$}]{};
    \draw[black,thick] (5.6,-0.6) -- (5.6,0.4);
    \draw[black,thick] (5.4,-0.4) -- (5.6,-0.6);
    \draw[black,thick] (5.4,0.6) -- (5.6,0.4);
    \draw[black,thick,->] (4.3,-0.3) -- (5.2,-0.3)
    node[pos=0.5,label=below:\textcolor{black}{\tiny Converges}]{};
    \draw [purple] plot [smooth, tension=0.3] coordinates { (4,0.2) (6,.3) (10,2)};
\end{tikzpicture}
\caption{Schematic diagram showing the transverse manifold $S$ converging to the unstable manifold of $z_0$.}\label{fig:scheme1}
\end{figure}

By Lemma \ref{lemma:inc}, we know that the forward solution to the autonomous system \eqref{eq:ODEF} of any transversal manifold $S$ to $W^s(z_0)$ at a point in $ \ell_1\subset W^s(z_0)$ at $t=T_1$ converges locally in the $C^1$ topology to $W^u(z_0)$ for large times $t>T_2$. This is shown schematically in Figure \ref{fig:scheme1}.

Since the perturbation $G$ in \eqref{eq:defF} is sufficiently small, the arguments of Section \ref{sec:build} apply, and we see that $S$ still intersects the stable manifolds of $z_\pm$ for the full system at sufficiently large times. Then all intersection points have the desired property that the forward solutions converge to $z_\pm$.

Essentially, we see that as $S$ evolves with the flow, its intersection with each of the stable manifolds $W^s(z_{\pm})$ is a 1-dimensional subset; this subset tends to a segment of the corresponding line $\ell_{\pm}$ as $\tau\to\infty$. With this in mind, we now transform back to the original time $t$.

\subsubsection{Stage 2}\label{sec:stage2}

The final step is to show that the 2-parameter family of initial conditions which gives a smooth extension over the singular orbit at time $t=0$ is mapped at time $t=T$ to a 2-dimensional submanifold $S$ which is transverse to the stable manifold of $z_0$, as shown in Figure \ref{fig:scheme2}. 

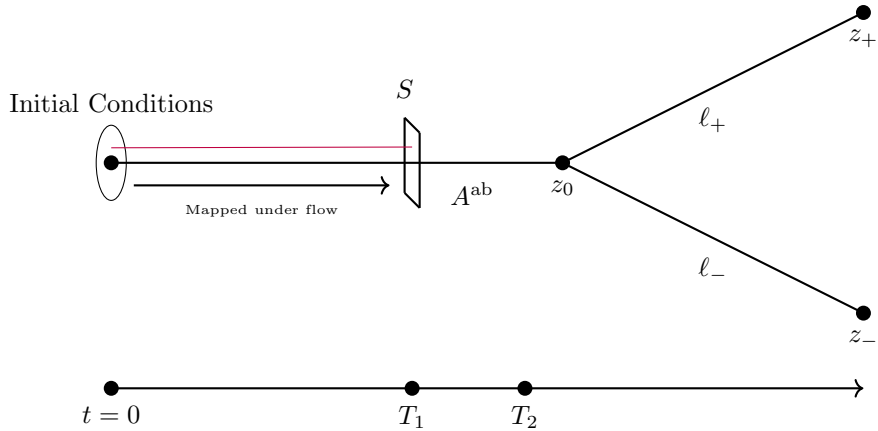
\begin{figure}[h]
\centering
\begin{tikzpicture}
\draw[black,thick] (0,0) -- (6,0)
    node[pos=0,mynode,fill=black]{}
    node[pos=1,mynode,fill=black,text=blue, label=below:\textcolor{black}{$z_0$}]{}
    node[pos=0.8,label=below:\textcolor{black}{$\Aab$}]{};
    \draw[black,thick] (6,0) -- (10,2)
    node[pos=1,mynode,fill=black,text=blue, label=below:\textcolor{black}{$z_+$}]{}
    node[pos=0.5,label=below:\textcolor{black}{$\ell_+$}]{};
    \draw[black,thick] (6,0) -- (10,-2)
    node[pos=1,mynode,fill=black,text=blue, label=below:\textcolor{black}{$z_-$}]{}
    node[pos=0.5,label=below:\textcolor{black}{$\ell_-$}]{};
    \draw[black,thick,->] (0,-3) -- (10,-3)
    node[pos=0,mynode,fill=black,text=blue, label=below:\textcolor{black}{$t=0$}]{}
    node[pos=0.4,mynode,label=below:\textcolor{black}{$T_1$}]{}
    node[pos=0.55,mynode,label=below:\textcolor{black}{$T_2$}]{};
    \draw[black,thick] (3.9,-0.4) -- (3.9,0.6)
    node[pos=1,label=above:\textcolor{black}{$S$}]{};
    \draw[black,thick] (4.1,-0.6) -- (4.1,0.4);
    \draw[black,thick] (3.9,-0.4) -- (4.1,-0.6);
    \draw[black,thick] (3.9,0.6) -- (4.1,0.4);
    \draw[black,thick,->] (0.3,-0.3) -- (3.7,-0.3)
    node[pos=0.5,label=below:\textcolor{black}{\tiny Mapped under flow}]{};
    \draw (0,0) ellipse (0.2cm and 0.5cm); 
    \node at (0,0.8) {Initial Conditions};
    \draw [purple] plot [smooth, tension=0.3] coordinates { (0,0.2) (4,.21)};
\end{tikzpicture}
\caption{Schematic diagram showing a 2-dimensional subspace of initial conditions flowing to a 2-dimensional transverse manifold at $t=T_1$.}\label{fig:scheme2}
\end{figure}

To study perturbations with respect to the initial data, we linearise the system around $\Aab$; for notational ease, we write
\[\widetilde{\Phi}_1:=\frac{\Phi_1}{2\dot{a}^3\dot{b}^2} \hspace{1cm} \text{and} \hspace{1cm} \widehat{\Phi}_1 := \frac{\Phi_1}{2\dot{a}^4\dot{b}}\]
and the same for $\Psi_1$ and $\chi_1$.

The linearisation of the ODEs at $f=f'=h=0$ is
\[
A(t)\!=\!D_2(F+G)(t,(0,0,g,0))\!=\!\left( \small\begin {array}{cccc} \widetilde{\chi}_1(g-1)&\widetilde{\Phi}_1+\frac{1}{2}g(\widetilde{\Phi}_1+\widetilde{\Psi}_1) &0&0\\ \noalign{\medskip}\widetilde{\Phi}_1-\frac{1}{2}g(\widetilde{\Phi}_1+\widetilde{\Psi}_1)&-\widetilde{\chi}_1(g+1)&0&0
\\ \noalign{\medskip}0&0&\widehat{\Psi}_1- \widehat{\Phi}_1&0\\ \noalign{\medskip}0&0&0&\widehat{\Phi}_1+ \widehat{\Psi}_1
\end {array} \right).  \]

The space of possible initial conditions on each bundle $P_j$ is 2-dimensional, parametrised by $f_0$ and $h_0$, where such local solutions are given by the expansions given in Proposition \ref{prop:11j}, namely for $j=2\nu-1$ and $\nu\in\Z_+$
\begin{align}\label{expansj}
        f(t) & = f_0t^{\nu-1} + O(t^{\nu+1}),\nonumber \\
        f'(t) & = \frac{\beta^3(1-2h_0)+1-\nu}{4\nu r_0\beta^2}f_0t^{\nu}+O(t^{\nu+2}),\\
        g(t) &=2\nu-1+O(t^2),\nonumber \\
        h(t)&=h_0+O(t^2).\nonumber
\end{align}
Given $Z_0= (f_0,h_0)\in\R^2$, let $Z(t,Z_0)$ be a solution on $P_j$ of \eqref{eq:G2ode11} with initial conditions determined by $Z_0$ at $t= 0$. Following the results of Amann in Chapter 9 of \cite{A80}, we consider solutions with initial conditions determined by $(f_0,h_0)$ in a small open neighbourhood of the origin. Corollary 9.3 of \cite{A80} tells us that $Z$ is at least continuously differentiable with respect to the initial condition $Z_0$. The following lemma shows that under the flow, this open neighbourhood is mapped to an open set that transversely intersects the stable manifold of the fixed point $z_0$. In particular, the open neighbourhood contains a curve of initial conditions which is mapped to a subset of the intersection $W^u(z_0)\cap W^s_T(z_0)$. Recall that for each $\tau_0$, we have a stable manifold $W^s_{\tau_0}(z_{\pm})$ of initial conditions at $\tau=\tau_0$ which flow towards the fixed point as $\tau\to\infty$.

\begin{lemma}
 Let $N(t,w) :=D_2Z(t,0)w$ for $w\in\R^2$. For large times $T$, the image of the linear map $w\mapsto N(T,w)$ is transverse to the tangent space of $W^s(z_0)$. 
\end{lemma}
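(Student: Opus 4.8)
The plan is to exploit the block-diagonal structure of the displayed linearisation $A(t)$ together with the hyperbolicity of $z_0$, which turns the claim into an independence statement about a single two-dimensional linear subsystem.

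\textbf{Step 1: the variational equation and the image of $N(T,\cdot)$.} By definition $t\mapsto N(t,w)$ solves the variational equation $\dot N=A(t)N$ along the reference trajectory $Z(t,0)=\Aab$, with $A(t)$ the matrix displayed just above the lemma; the crucial feature is that $A(t)$ is block-diagonal, with a $2\times2$ block on the $(f,f')$-plane and the scalar equations $\dot{\delta g}=(\widehat\Psi_1-\widehat\Phi_1)\delta g$ and $\dot{\delta h}=(\widehat\Phi_1+\widehat\Psi_1)\delta h$. I would then compute $\operatorname{Im}N(T,\cdot)$ by differentiating the local expansions of Proposition~\ref{prop:11j} at $(f_0,h_0)=(0,0)$. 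Every connection extending smoothly over the singular orbit has $g(0)=j$ (Lemma~\ref{lemma:nonab11}), so the $\delta g$-component of $N(\cdot,w)$ solves a homogeneous scalar ODE with zero initial value, whence $\delta g\equiv0$ and $\operatorname{Im}N(T,\cdot)\subset\{g=0\}$. Since $h(0)=h_0$ is independent of $f_0$, the $e_{f_0}$-variation has $\delta h\equiv0$; since the leading $f'$-coefficient depends on $h_0$ only through the prefactor $f_0$ and the solution with $f_0=0$ is abelian (so $f=f'\equiv0$), the $e_{h_0}$-variation has $\delta f=\delta f'\equiv0$. Hence
\[N(\cdot,e_{f_0})=\bigl(v(t),0,0\bigr),\qquad N(\cdot,e_{h_0})=\Bigl(0,0,0,\exp\bigl(\textstyle\int_0^t(\widehat\Phi_1+\widehat\Psi_1)\,d\tau'\bigr)\Bigr),\]
where $v(t)\in\R^2$ is a nonvanishing solution of the $(f,f')$-block and the last entry is positive because $\Phi_1+\Psi_1=(b+r_0^3)^2(b-r_0^3)>0$ by Remark~\ref{rem:abcond}. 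So $\operatorname{Im}N(T,\cdot)=\langle(v(T),0,0)\rangle\oplus\langle(0,0,0,1)\rangle$.

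\textbf{Step 2: reduction to a $2\times2$ independence statement.} As $\tau\to\infty$, $\Aab(\tau)\to z_0$, and by \eqref{eq:Glim}--\eqref{eq:DGlim} the matrix $A$ converges in the rescaled time to $DF(z_0)$ of Table~\ref{tab:fix}: the $(f,f')$-block tends to $\left(\begin{smallmatrix}-2&4\\4&-2\end{smallmatrix}\right)$, with eigenvalues $-6<2$ and eigenvectors $(1,-1),(1,1)$, while the $\delta g$- and $\delta h$-equations limit to $\dot{\delta g}=-6\delta g$ and $\dot{\delta h}=2\delta h$. By the asymptotic theory of linear systems with a constant limit (Levinson's theorem), the space of variational solutions decaying as $\tau\to\infty$ — whose time-$T$ values span $T_{\Aab(T)}W^s(z_0)$ — equals $\{(c_1 v_{\mathrm{dec}}(T),c_2(\delta g)(T),0):c_1,c_2\in\R\}$, where $v_{\mathrm{dec}}$ is the (one-dimensional) decaying solution of the $(f,f')$-block. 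Comparing the two $2$-planes: the $g$-slot is filled by $(\delta g)(T)\neq0$ and the $h$-slot by the positive last entry of $N(T,e_{h_0})$, so $\operatorname{Im}N(T,\cdot)$ is transverse to $T_{\Aab(T)}W^s(z_0)$ precisely when $v(T)$ and $v_{\mathrm{dec}}(T)$ are linearly independent in $\R^2$.

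\textbf{Step 3: the growing mode — the hard part.} It therefore suffices to show that $v=N(\cdot,e_{f_0})|_{(f,f')}$ — the unique solution, up to scale, of the $(f,f')$-block that extends smoothly over the singular orbit, with near-$t=0$ behaviour $t^{\nu-1}(1,0)$ to leading order by Proposition~\ref{prop:11j} — is \emph{not} the decaying solution $v_{\mathrm{dec}}$; equivalently, that $v$ carries a nonzero component of the growing mode $e^{2\tau}(1,1)$. I expect this to be the main obstacle: it is a genuinely global statement about a non-autonomous $2\times2$ system whose coefficients are built from $a,b$, for which only the inequalities of Remark~\ref{rem:abcond} and the asymptotics of Proposition~\ref{prop:5.3} are available. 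My intended route is to pass to the coordinates $(p_+,p_-)=(\delta f+\delta f',\delta f-\delta f')$, in which the limiting matrix is $\operatorname{diag}(2,-6)$ and $p_+\sim p_-\sim t^{\nu-1}$ near $t=0$, and to control the sign and size of $p_+$ using the factorisations $\Phi_1+\Psi_1=(b+r_0^3)^2(b-r_0^3)$, $\chi_1=a(b+r_0^3)^2$, $\Psi_1-\Phi_1=(b+r_0^3)\bigl((b-r_0^3)^2-4a^2\bigr)$ together with the bound $4a^2>(b-r_0^3)^2$ from Remark~\ref{rem:abcond}, so as to rule out that the off-diagonal coupling annihilates the growing component. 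Granting this, transversality holds for all large $T$, since $\operatorname{Im}N(T,\cdot)\to T_{z_0}W^u(z_0)=\operatorname{span}\{(1,1,0,0),(0,0,0,1)\}$ while $T_{\Aab(T)}W^s(z_0)\to T_{z_0}W^s(z_0)$, and the latter two subspaces are complementary.
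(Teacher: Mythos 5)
Your Steps 1 and 2 correctly isolate the structure of the problem, and in fact spell out more explicitly than the paper does that the block-diagonality of $A(t)$ forces $\delta g\equiv0$, that $N(t,e_{h_0})$ is a positive multiple of $(0,0,0,1)$, and that the whole claim therefore reduces to showing the $(f,f')$-component $v(T)$ of $N(T,e_{f_0})$ is not proportional to the stable direction $(1,-1)$. That reduction is exactly what the paper needs.

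The trouble is that Step 3 — the only substantive part of the lemma — is not actually carried out. You write ``Granting this, transversality holds,'' and what precedes it is an \emph{intended route} (passing to $(p_+,p_-)=(\delta f+\delta f',\delta f-\delta f')$ and controlling the sign of $p_+$ using the factorisations and Remark~\ref{rem:abcond}), not a proof. In the $(p_\pm)$-coordinates the cross-coupling terms have coefficients proportional to $g$, which is positive and only decays along the flow, so the off-diagonal contribution is genuinely present and some global monotonicity or cone-type argument is still required to rule out cancellation of the growing mode; your sketch does not supply it. The paper's proof provides precisely this missing ingredient, by a method different from the one you sketch: it exhibits the closed region
\[
\mathcal{R}=\{f\geq f',\, f\geq -2f'\}\cup\{f\leq f',\, f\leq -2f'\},
\]
which is a double cone in the $(f,f')$-plane bounded away from the direction $(1,-1)$, and shows that $\mathcal{R}$ is forward-invariant under the linearised flow. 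Invariance is established by computing the sign of the transverse component of the linearised vector field along the two boundary lines $f+2f'=0$ and $f=f'$ via the cross product; the resulting quantities are $3\widetilde{\Phi}_1+\tfrac{1}{2}g\bigl(8\widetilde{\chi}_1-5(\widetilde{\Phi}_1+\widetilde{\Psi}_1)\bigr)$ and $-g\bigl(\widetilde{\Phi}_1+\widetilde{\Psi}_1+2\widetilde{\chi}_1\bigr)$, whose signs are controlled using the factorisations $8\chi_1-5(\Phi_1+\Psi_1)=(b+r_0^3)^2\bigl(8a-5(b-r_0^3)\bigr)$ and $\Phi_1+\Psi_1+2\chi_1=(b+r_0^3)^2(b-r_0^3+2a)$ together with the global inequalities $ka>b-r_0^3$ for $k\in(1,2)$ from Remark~\ref{rem:abcond}. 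Combined with the short-time expansion $N(t,(1,0))=t^{\nu-1}(1,0,0,0)+O(t^{\nu})$, which places $N(\epsilon,(1,0))\in\mathcal{R}$, this yields $N(T,(1,0))\in\mathcal{R}$ for all $T$, hence transversality. You should either reproduce an argument of this type or make your $(p_+,p_-)$ analysis quantitative; as written the proposal stops at the point where the real work begins.
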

\begin{proof}
We show that $N(t,w)$ is orthogonal to both of the stable eigenvectors for the fixed point $z_0$, namely
\[ (0,0,1,0) \hspace{1cm} \text{ and } \hspace{1cm} (1,-1,0,0).\] Firstly, since $g$ is independent of $h$, we see that $N(t,(0,1))$ is proportional to the vector $(0,0,0,1)$ for all $t$, and hence is orthogonal to $W^s(z_0)$. Next, we consider $w=(1,0)$; we claim that $N(t,w)$ lies in the region \[\mathcal{R}=\{(f,f',g,h) :f\geq f',f\geq -2f'\}\cup\{(f,f',g,h) :f\leq f',f\leq -2f'\}.\] In fact, we now show that this condition is preserved by the linearised flow, hence it is enough to prove the claim for some $t>0$. 

First consider the boundary $f+2f'=0$ of $\mathcal{R}$ given by the vector $(2,-1)$ in the $(f,f')$-plane. Let $B$ be the submatrix of $A$ given by the first two columns and rows. Then applying $B$ to $(2,-1)$ gives the direction of the flow at any time $t$ at that point. We apply the cross product and take the $z$-component, i.e.
\[\left(\left(\begin{array}{c} 2 \\ -1\end{array}\right)\times B\left(\begin{array}{c} 2 \\ -1\end{array}\right)\right)_z = 3\widetilde{\Phi}_1+\frac{1}{2}g(8\widetilde{\chi}_1-5(\widetilde{\Phi}_1+\widetilde{\Psi}_1)). \]
We have that $\Phi_1>0$, $g>0$ and $8\chi_1-5(\Phi_1+\Psi_1) = (b+r_0^3)^2(8a-5(b-r_0^3))$. By the conditions on $a$ and $b$ outlined in Remark \ref{rem:abcond}, the metric functions $a,b$ satisfy $ka-b+r_0^3>0$ for $k\in(1,2)$, hence the $z$-component of this vector is positive. The right-hand rule for the cross product implies that the flow must be in the direction indicated in Figure \ref{fig:flow}.

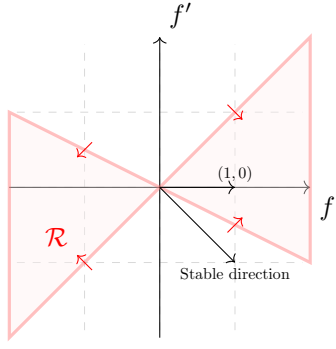
\begin{figure}[t]
\begin{center}
\begin{tikzpicture}[]
\draw[help lines, color=gray!30, dashed] (-1.9,-1.9) grid (1.9,1.9);
\draw[->] (-2,0)--(2,0) node[below right]{$f$};
\draw[->] (0,-2)--(0,2) node[above right]{$f'$};

\filldraw[color=red!60, fill=red!5, very thick,opacity=0.4] (0,0) -- (2,2) -- (2,-1) --(0,0);

\filldraw[color=red!60, fill=red!5, very thick,opacity=0.4] (0,0) -- (-2,-2) -- (-2,1) --(0,0);

\draw[->] (0,0)--(1,-1) node[below,scale=0.6] {Stable direction};
\draw[->] (0,0)--(1,0) node[above,scale=0.6] {$(1,0)$};

\draw[->,red] (0.9,1.1)--(1.1,0.9);
\draw[->,red] (0.9,-0.6)--(1.1,-0.4);
\draw[->,red] (-0.9,-1.1)--(-1.1,-0.9) node[red, above left] {$\mathcal{R}$};
\draw[->,red] (-0.9,0.6)--(-1.1,0.4);

\end{tikzpicture}
\end{center}
\caption{The directions of flow of \eqref{eq:G2ode11} along the lines $f'=f$ and $2f'=-f$.}
\label{fig:flow}
\end{figure}

On the other hand, taking the vector $(1,1)$ gives
\[\left(\left(\begin{array}{c} 1 \\ 1\end{array}\right)\times B\left(\begin{array}{c} 1 \\ 1\end{array}\right)\right)_z = -g(\widetilde{\Phi}_1+\widetilde{\Psi}_1+2\widetilde{\chi}_1)<0\]
since $g>0$  and $\Phi_1+\Psi_1+2\chi_1=(b+r_0^3)^2(b-r_0^3+2a)>0$ by the conditions on $a$ and $b$ given in Remark \ref{rem:abcond}. Hence the flow must be in the direction indicated in Figure \ref{fig:flow}. We have proved the claim that if $N(t,(1,0))$ lies in the region $\mathcal{R}$ for some $t>0$, then it does for all time.

In fact, for any $Z_0$ with $h_0=0$ and $f_0$ sufficiently small and for small $t>0$, we have \[N(t,(1,0))=\tfrac{\partial}{\partial f_0}Z(t,(f_0,0))=t^{\nu-1}(1,0,0,0)+O(t^\nu).\] So there exists $\epsilon>0$ such that $N(\epsilon,(1,0))\in\mathcal{R}$, and hence $N(t,(1,0))\in\mathcal{R}$ for all time. We have proved that $N(T,(1,0))$ is orthogonal to $W^s(z_0)$ for large $T$ and so the image of the linear map $w\mapsto N(T,w)$ is transverse to the stable manifold for large times $T$.
\end{proof}

In particular, we can choose initial conditions which flow to the 1-dimensional intersection of the transversal with the stable manifolds of the fixed points $z_{\pm}$ and hence obtain a 1-parameter family of global solutions. Each member of the 1-parameter family of solutions with limit $z_+$ is gauge equivalent to a corresponding member of the family tending to $z_-$ under the $\G$-invariant gauge transformation of Remark \ref{rem:gaugetrans}. Hence there is a 1-parameter family of solutions on each bundle up to gauge transformation given by this construction.

\begin{lemma}
We can parameterise this family of solutions by $f_0$.
\end{lemma}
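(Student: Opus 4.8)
The plan is to show that the curve $\Gamma\subset\R^2$ of initial data $(f_0,h_0)$ whose solutions extend to global $G_2$-instantons --- the set constructed above as the preimage, under the time-$T_1$ flow, of the $1$-dimensional intersections $\Phi(\cdot,S)\cap W^s_T(z_\pm)$ --- is, near the origin, the graph of a function $h_0=h_0(f_0)$ transverse to the $h_0$-axis. Once this is known, $f_0$ is a faithful parameter: from any member of the family one recovers $f_0=\lim_{t\to0}t^{1-\nu}f(t)$ by the expansion \eqref{expansj}, so distinct values give distinct instantons, and by Corollary 9.3 of \cite{A80} the solution depends continuously on $f_0$.

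To produce $h_0(f_0)$ I would linearise around $\Aab$ and work near the saddle $z_0$. By Corollary 9.3 of \cite{A80} the map $Z_0\mapsto Z(T_1,Z_0)$ is a $C^1$ diffeomorphism of a neighbourhood of $(0,0)$ onto $S$, sending $(0,0)$ to $q_0\in\ell_1\subset W^s(z_0)$ with differential at $0$ spanned by $v_1:=N(T_1,(1,0))$ and $v_2:=N(T_1,(0,1))$. From the block form of the linearisation $A(t)$ about $\Aab$ one reads off $v_2\in\R\,(0,0,0,1)$ (the $\delta h$-component evolves on its own and is nonzero), and the previous lemma gives $v_1=(f^{\ast},{f'}^{\ast},0,0)$ with $(f^{\ast},{f'}^{\ast})\in\mathcal R$, whence $f^{\ast}+{f'}^{\ast}\neq0$ (in $\mathcal R$ the relation $f=-f'$ forces $f=f'=0$, while $v_1\sim t^{\nu-1}(1,0,0,0)$ as $t\downarrow 0$). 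Now $DF(z_0)$ is symmetric, so $\R^4=T_{z_0}W^s(z_0)\oplus T_{z_0}W^u(z_0)$ orthogonally, with $T_{z_0}W^u(z_0)=\Pi_2=\mathrm{span}\{(1,1,0,0),(0,0,0,1)\}$, and inside the $2$-dimensional invariant manifold $W^u(z_0)\subset\Pi_2$ the only trajectories converging to $z_{\pm}$ are the heteroclinics $\ell_{\pm}=W^u(z_0)\cap W^s(z_{\pm})$, whose tangents at $z_0$ are $(\pm1,\pm1,0,1)$ --- both with nonzero $(1,1,0,0)$-component. By hyperbolicity of $z_0$, a small displacement $\delta$ of $q_0$ transverse to $W^s(z_0)$ exits a neighbourhood of $z_0$ along $W^u(z_0)$ in the direction of its projection $\delta^{u}$ onto $T_{z_0}W^u(z_0)$, so the forward orbit of $q_0+\delta$ tends to $z_{\pm}$ iff $\delta^{u}$ is a positive multiple of $(\pm1,\pm1,0,1)$. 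Since the projections of $v_1$ and $v_2$ onto $\Pi_2$ are a nonzero multiple of $(1,1,0,0)$ and a nonzero multiple of $(0,0,0,1)$ respectively, writing $\delta=f_0v_1+h_0v_2+O(2)$ turns this into a nondegenerate linear equation $c_1f_0=\pm c_2h_0+O(2)$ with $c_1,c_2\neq0$; the implicit function theorem solves it for $h_0=h_0(f_0)$ on each side of $0$, the branches meeting at $(0,0)$, which corresponds to $\Aab$ itself (converging to $z_0$). Thus $\Gamma=\{(f_0,h_0(f_0))\}$ is a continuous curve transverse to the $h_0$-axis, and --- since $G$ is small and satisfies \eqref{eq:Glim}--\eqref{eq:DGlim} --- the Inclination Lemma argument of Sections \ref{sec:build} and \ref{sec:stage1} shows it is genuinely the set of globally defined solutions. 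The gauge transformation of Remark \ref{rem:gaugetrans} acts by $(f_0,h_0)\mapsto(-f_0,h_0)$ and identifies the two branches, so up to gauge the family is parametrised by $f_0\ge0$.

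The step I expect to be the main obstacle is controlling the two limiting processes simultaneously: the Inclination Lemma only delivers $C^1$-closeness of $\Phi(t,S)$ to $W^u(z_0)$ as $t\to\infty$, so one must check that the intersection $\Phi(t,S)\cap W^s_T(z_\pm)$, and hence its pullback $\Gamma$, remains a $C^1$ curve with the tangent direction computed above, uniformly as $T\to\infty$ --- that is, that the long transit of $S$ past $z_0$ does not smear out the first-order data. This is precisely where the hyperbolicity of $z_0$ enters (the $T_{z_0}W^u(z_0)$-component of a transverse displacement grows at a controlled exponential rate and dominates), together with the $C^1$ perturbation estimates for $G$ already set up in Section \ref{sec:build}; everything else --- the block structure of $A(t)$, the sign of $f^{\ast}+{f'}^{\ast}$, and the tangent directions of $\ell_\pm$ --- is a direct consequence of the data already assembled.
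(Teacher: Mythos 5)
Your argument is genuinely different from the paper's, and substantially more elaborate. The paper's proof is short and hinges on a single observation: $N(T,(0,1))$ is proportional to $v=(0,0,0,1)$, the direction $v$ is preserved by both the linearisation $A(t)$ and the full flow (the line $\{f=f'=g=0\}$ is an invariant set), and $v\notin T_{z_\pm}W^s(z_\pm)$ (a direct check from Table \ref{tab:fix}). Hence $v$ can never become tangent to $W^s(z_\pm)$ under forward flow, so the image of the $h_0$-direction is transverse to the curve of initial data $\Gamma$ and the $f_0$-projection is injective. This argument works entirely ``at $z_\pm$'' and needs no detailed dynamical analysis near $z_0$.

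You instead argue ``at $z_0$'': you project the tangent vectors $v_1,v_2$ onto the unstable eigenspace $\Pi_2$, compare with the tangents $(\pm1,\pm1,0,1)$ of $\ell_\pm$, and derive an approximate relation $c_1f_0=\pm c_2 h_0$ via the implicit function theorem. Your computation of $v_1,v_2$ and their $\Pi_2$-projections is correct (the argument that $f^\ast+f'^\ast\neq 0$ because $v_1\in\mathcal R\setminus\{0\}$ is a nice re-use of the previous lemma). But the step you yourself flag is a genuine gap, not a detail: the Inclination Lemma gives only $C^1$-closeness of the pushed-forward disc to $W^u(z_0)$ asymptotically, and does \emph{not} assert that a transverse displacement $\delta$ exits a neighbourhood of $z_0$ ``along $W^u(z_0)$ in the direction of its projection $\delta^u$.'' To make that precise one would need a local linearisation or invariant-foliation statement near the hyperbolic saddle $z_0$, together with uniform control on how this composes with the subsequent approach to $z_\pm$; none of this is supplied, and the IFT argument depends on it. The paper's flow-invariance-of-$v$ trick avoids this uniformity problem entirely, which is precisely why it is the more economical route. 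A minor additional remark: the conclusion ``up to gauge the family is parametrised by $f_0\geq 0$'' is a reasonable gloss but is stronger than what the paper asserts; the paper keeps the parametrisation $f_0\in(-\epsilon,\epsilon)$, with the gauge transformation sending $f_0\mapsto -f_0$ and exchanging the $z_+$- and $z_-$-limits, and does not commit to your leading-order picture of two straight branches meeting at a corner.
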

\begin{proof}
Indeed, $N(T,(0,1))$ is proportional to $v=(0,0,0,1)$ and $v$ is invariant under both the linearisation $A(t)$ and the flow. However, $v\not\in T_{z_{\pm}}W^s(z_{\pm})$ and hence $N(T,(0,1))$ is not tangent to $W^s(z_{\pm})$. So in a small neighbourhood of the origin in the $(f_0,h_0)$-plane, the projection to the $f_0$-axis of the curve of initial conditions corresponding to this family of perturbations is injective. 
\end{proof}

To summarise, we have shown that a 2-dimensional perturbation of the initial condition of the abelian solution $\Aab$ flows to a manifold transverse to the stable manifold of the fixed point $z_0$. By applying the Inclincation Lemma, we see that this transversal manifold then flows to become $C^1$-close to the unstable manifold of $z_0$ and hence lies in the intersection with the stable manifold of $z_{\pm}$. Since the non-autonomous part of the system is sufficiently small at large enough times, we can see that a curve of solutions in this intersection flows into the fixed points $z_{\pm}$ as $t\to\infty$. This curve of solutions exists for all $t\geq0$ for $f_0$ in some interval $(-\epsilon, \epsilon)$, giving a 1-parameter family of global solutions. This completes the proof of Theorem \ref{thm:main}.

\subsection{Asymptotics of the solutions}

The fixed point $z_+$ defines a connection on the nearly K\"ahler $(S^3\times S^3)/\mathbb{Z}_4$, namely
\[A_{\infty}=\frac{1}{3}\sum_{i=1}^3 E_i\otimes (e_i+e_i').\]
We can consider $S^3\times S^3$ as the homogeneous manifold $G/K=\SU(2)^3/\Delta\SU(2)$ and let $e_i''$ be a coframe on the third $\SU(2)$-factor. The nearly K\"ahler metric is induced from a multiple of the Cartan-Killing form
\[B(X,Y)=\text{Tr}_{\mathfrak{g}}(\text{ad}(X)\text{ad}(Y)), \hspace{1cm} \forall X,Y\in\mathfrak{g}.\]
The subspace $\mathfrak{k}$ of $\mathfrak{g}$ is generated by $E_i+E_i'+E_i''$ for $i=1,2,3$. Let $\mathfrak{m}$ be an orthogonal complement to $\mathfrak{k}$ with respect to $B$, then the canonical invariant connection on $G/K$ is given by
\[A=\frac{1}{3}\sum_{i=1}^3 (E_i+E_i'+E_i'')\otimes (e_i+e_i'+e_i'')\]
on the $\SU(2)$-bundle $P_{\lambda}$ over $G/K$, where $\lambda:\Delta\SU(2)\to\SU(2)$. Pulling $A$ back to $\SU(2)^2$ via the inclusion map $\SU(2)^2\to\SU(2)^3$ yields the connection $A_{\infty}$. The tangent bundle of $G/K$ can be identified with the vector bundle associated to $G\to G/K$ via the representation
\[\rho_{\mathfrak{m}}:K\to \text{GL}(\mathfrak{m}).\]
Then $A$ induces a connection on $T(G/K)$ which has holonomy contained in $\SU(3)$ and skew-symmetric torsion. Theorem 10.1 of \cite{FI01} tells us that such a connection is unique, and hence it must coincide with the so-called canonical Hermitian connection. This connection is given by the formula
\[\nabla -\frac{1}{2}J(\nabla J)\]
where $\nabla$ is the Levi-Civita connection and $J$ is the almost complex structure on $G/K$. In summary, the limiting connection of the solutions in Theorem \ref{thm:main} is the canonical connection on the nearly K\"ahler manifold $(S^3\times S^3)/\mathbb{Z}_4$.

\appendix
\section{Extending \texorpdfstring{$\SU(2)^2$}{SU(2)xSU(2)}-invariant connections over the singular orbit}
\label{sec:invtens}

For the $G_2$-manifold $M_{m,n}$, the singular orbit is $Q=S^3\times S^3/K_{m,n}$; we now determine the conditions for an invariant connection to extend smoothly over $Q$. This singular orbit is a circle bundle over $D=S^2\times S^2=\SU(2)^2/T^2$, i.e. $Q=\SU(2)^2\times_{T^2} S^1$, and $M_{m,n}$ is the total space of a $\C\times S^1$-bundle over $D$, i.e. $M_{m,n}=\SU(2)^2\times_{T^2}(S^1\times\C)$. Here, $T^2$ acts on $S^1$ with weight $(m,n)$, and on $\C$ with weight $(2,-2)$. Since $m,n$ are coprime, $K_{m,n}\cong\ $U(1) is embedded in $T^2$ by $e^{i\theta}\mapsto (e^{in\theta},e^{-im\theta})$.

Define $\mathfrak{p}$ by $\su(2)\oplus\su(2)=\text{Lie}(K_{m,n})\oplus\mathfrak{p}$, then given a point $q \in Q$, we can identify $T_qQ$ with $\mathfrak{p}$ and $T_qM$ with $\mathfrak{p}\oplus V$. As a $K_{m,n}$-representation, $\mathfrak{p}$ is induced by the $T^2$-representation $\R\oplus\C_{2,0}\oplus\C_{0,2}$, and so as a real representation, $\mathfrak{p}=\R\oplus\R^2_{2n}\oplus\R^2_{2m}$. Let $t,x$ be coordinates on $V\cong\R^2$, then we see from \cite{FHN18} that the coframe 
\[se_3-re_3' = \frac{1}{(m+n)t}dx;\]
also, recall that $e_1,e_2$ and $e_1',e_2'$ are coframes on $\R^2_{2n}$ and $\R^2_{2m}$ respectively, while $me_3+ne_3'$ generates the trivial $\R\subset\mathfrak{p}$.

Eschenburg-Wang \cite{EW00} provide a method for finding the conditions to extend tensors smoothly over the singular orbit. A tubular neighbourhood of $Q$ in $M_{m,n}$ is equivariantly diffeomorphic to a neighbourhood of the zero section in the vector bundle $\SU(2)^2\times_{K_{m,n}} V\to Q$, where $V$ is the irreducible 2-dimensional real representation of $K_{m,n}$ with weight $2(m+n)$. In other words, the tubular neighbourhood is modelled on $W=V\oplus\mathfrak{p}=\R^2_{2(m+n)}\oplus\R\oplus\R^2_{2n}\oplus\R^2_{2m}$. By identifying $\R^2\cong\C$, the action of $\zeta\in K_{m,n}\cong\ $U(1) on $(z_1,y,z_2,z_3)\in W$ is given by
\[\zeta\cdot(z_1,y,z_2,z_3) = (\zeta^{2(m+n)}z_1,y,\zeta^{2n}z_2,\zeta^{-2m}z_3).\]

Let $P_{j}$ be a principal $H$-bundle on $M_{m,n}$, parameterised by isotropy homomorphisms $\lambda_j:K_{m,n}\to H$. Then $K_{m,n}$ acts on $\mathfrak{h}$ by $\Ad\circ\lambda_j$. We consider $\SU(2)^2$-invariant elements of $\Omega^1(\Ad P_j)$. The invariance property means that such a 1-form is determined by its restriction to the fibre of $\SU(2)^2\times_{K_{m,n}}V$ over the point $q\in Q$. Hence, it is given by a $K_{m,n}$-equivariant map $L:V\to W^*\otimes\mathfrak{h}$. Fix $v_0=1\in S^1\subset V$, then by $K_{m,n}$-equivariance, $L$ is uniquely determined by the curve $t\mapsto L(tv_0)$ whose image lies in the subspace of $K_0$-invariant sections of $\Omega^1(\Ad P_j)$. Indeed, for each $t$, the restriction of such a 1-form to the corresponding orbit can be written as a map from the tangent space at a single point to the fibre over that point of the adjoint bundle, which is a copy of $\mathfrak{h}$. This is the map $\Lambda$ given in \eqref{eq:Lambda}. The equivariance condition on $\Lambda$ with respect to each isotropy subgroup is exactly the condition that the corresponding section has values in the adjoint bundle.

To determine which invariant connections on $P_j$ extend smoothly over $Q$, we want to consider the evaluation at $v_0$ of homogeneous $K_{m,n}$-invariant polynomials $\phi:\R^2_{2(m+n)}\to\Hom(W,\mathfrak{h})\cong W^*\otimes\mathfrak{h}$ of minimal degree. Then Lemma 1.1 of \cite{EW00} implies that the corresponding  connection extends over the singular orbit $Q$ if and only if the Taylor series expansion around zero of the coefficient of each component of the representation $W^*\otimes\mathfrak{h}$ has the same parity as the corresponding homogeneous equivariant polynomial $\phi$ and the first non-zero term has the same degree as $\phi$. We now see what these conditions mean practically for connections on bundles with gauge group $H=\SU(2)$.

The action of $K_{m,n}$ on $\mathfrak{h}=\su(2)$ is given by $\Ad\circ\lambda_j$ on the bundle $P_j$. We use the matrix basis of $\su(2)$ given by
\[E_1=\left(\begin{array}{cc}
     0& i \\ i& 0
\end{array}\right),\hspace{1cm} E_2 = \left(\begin{array}{cc}
     0& -1 \\ 1& 0
\end{array}\right),\hspace{1cm} E_3=\left(\begin{array}{cc}
     i& 0 \\ 0& -i
\end{array}\right).\]
In this case, irreducible invariant connections are determined by a morphism of $\Z_{2(m+n)}$-representations $\Lambda:(\R\oplus\C_{2n})\oplus(\R\oplus\C_{-2m})\to\R\oplus\C_{2j}$ only if $j\equiv n\mod2(m+n)$ or $j\equiv -m \mod2(m+n)$. Since $\C_{2n}=\C_{-2m}$ as $\Z_{2(m+n)}$-representations, there are no restrictions on $\Lambda$ for these choices of $j$.

\begin{lemma}
\label{lemma:nonabmn}
Consider the manifold $M_{m,n}$. Then the invariant connection over the principal orbits
\[A  = A_{12}(E_1\otimes e_1+E_2\otimes e_2)+ A_{12}'(E_1\otimes e'_1+E_2\otimes e'_2)+A_{rs}(E_3\otimes (se_3-re_3'))+A_{mn}(E_3\otimes(me_3+ne_3'))\]
on $P_j$ with $j\equiv n\mod2(m+n)$ or $j\equiv -m \mod2(m+n)$ extends to a smooth section of $\Omega^1(\Ad P_j)$ if and only if $A_{rs},A_{mn}$ are even, with $A_{rs}(0)=0$, and we have
\[\begin{cases}A_{12} \text{ even and } O(t^{\left\vert\frac{j-n}{m+n}\right\vert}), A_{12}' \text{ odd and } O(t^{\left\vert\frac{j+m}{m+n}\right\vert}) & \text{ for }j\equiv n \mod 2(m+n); \\
A_{12} \text{ odd and } O(t^{\left\vert\frac{j-n}{m+n}\right\vert}), A_{12}' \text{ even and } O(t^{\left\vert\frac{j+m}{m+n}\right\vert}) & \text{ for }j\equiv -m \mod 2(m+n).
\end{cases}\]
\end{lemma}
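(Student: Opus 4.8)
The plan is to apply the Eschenburg--Wang criterion (Lemma 1.1 of \cite{EW00}), exactly in the form set up in the body of this appendix. Writing the connection relative to the canonical invariant connection $A^{\mathrm{can}}=jE_3\otimes(se_3-re_3')$, it becomes an $\SU(2)^2$-invariant element of $\Omega^1(\Ad P_j)$, hence a $K_{m,n}$-equivariant polynomial map $\phi\colon V=\R^2_{2(m+n)}\to W^*\otimes\mathfrak h$ determined by the curve $t\mapsto\phi(tv_0)$. Such a map extends smoothly over the zero section precisely when, for each $K_{m,n}$-irreducible summand of $W^*\otimes\mathfrak h$, the corresponding component of $\phi(tv_0)$ has a Taylor expansion at $0$ whose parity and order of vanishing agree with those of the minimal-degree $K_{m,n}$-equivariant homogeneous polynomial landing in that summand. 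So the whole argument reduces to: (a) decomposing $W^*\otimes\mathfrak h$ into $K_{m,n}$-irreducibles; (b) matching the surviving summands with the four coefficient functions $A_{12},A_{12}',A_{rs},A_{mn}$; and (c) recording the minimal polynomial into each relevant summand.

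First I would fix the $K_{m,n}$-module structures. On $\mathfrak h=\su(2)$ the action $\Ad\circ\lambda_j$ fixes $E_3$ and rotates the $(E_1,E_2)$-plane with weight $2j$, so $\mathfrak h=\R_0\oplus\R^2_{2j}$. Combining $\mathfrak p=\R\oplus\R^2_{2n}\oplus\R^2_{2m}$ (the trivial line spanned by $me_3+ne_3'$, and $e_1,e_2$ resp. $e_1',e_2'$ spanning $\R^2_{2n}$ resp. $\R^2_{2m}$) with $V^*\cong\R^2_{2(m+n)}$ and the relation $se_3-re_3'=\tfrac1{(m+n)t}\,dx$, one gets
\[
W^*\otimes\mathfrak h\;\cong\;\bigl(\R_0\oplus\R^2_{2(m+n)}\oplus\R^2_{2n}\oplus\R^2_{2m}\bigr)\otimes\bigl(\R_0\oplus\R^2_{2j}\bigr),
\]
which decomposes completely via $\R^2_a\otimes_{\R}\R^2_b\cong\R^2_{a+b}\oplus\R^2_{|a-b|}$. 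The arithmetic heart of (b) is the elementary fact that a nonzero $K_{m,n}$-equivariant polynomial $\R^2_{2(m+n)}\to\R^2_{2\ell}$ exists iff $(m+n)\mid\ell$, in which case the minimal one (namely $z\mapsto z^{|\ell|/(m+n)}$ or its conjugate) has degree $|\ell|/(m+n)$. I would then check, using coprimality of $m,n$ together with $j\equiv n$ or $j\equiv-m\pmod{2(m+n)}$, that among all summands of $W^*\otimes\mathfrak h$ only four admit an equivariant map from $V$: the trivial $\R_0$ (carried by $E_3\otimes(me_3+ne_3')$, i.e.\ $A_{mn}$), the copy of $\R^2_{2(m+n)}$ in $V^*\otimes\R_0$ (carried by $E_3\otimes dx$, i.e.\ $A_{rs}$), the summand $\R^2_{2|j-n|}\subset\R^2_{2n}\otimes\R^2_{2j}$ (carried by $E_1\otimes e_1+E_2\otimes e_2$, i.e.\ $A_{12}$), and the summand $\R^2_{2|j+m|}\subset\R^2_{2m}\otimes\R^2_{2j}$ (carried by $E_1\otimes e_1'+E_2\otimes e_2'$, i.e.\ $A_{12}'$); all other summands contribute nothing, which is exactly why an invariant connection has the displayed four-term form. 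One must track signs carefully here: passing to the complexified basis $E_1\pm iE_2$ and dual forms $e_1\mp ie_2$ shows that $E_1\otimes e_1+E_2\otimes e_2$ lies in the $\R^2_{2|j-n|}$ summand (not $\R^2_{2(j+n)}$), and analogously the primed combination lies in $\R^2_{2|j+m|}$.

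It then remains to read off parities and vanishing orders. For $\R_0$ the minimal polynomial is constant (degree $0$), so $A_{mn}$ is even and unconstrained at $0$. For $\R^2_{2(m+n)}$ it is $z\mapsto z$, degree $1$ and odd; since the coefficient of $E_3\otimes dx$ equals $A_{rs}/((m+n)t)$, this forces $A_{rs}/t$ to be odd and $O(t)$, i.e.\ $A_{rs}$ even with $A_{rs}(0)=0$. For $\R^2_{2|j-n|}$ the minimal polynomial has degree $d_{12}=|j-n|/(m+n)$, whose parity is even when $j\equiv n$ and odd when $j\equiv-m$ modulo $2(m+n)$; hence $A_{12}$ is even, resp.\ odd, and $O(t^{d_{12}})$ in the two cases. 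Symmetrically $A_{12}'$ has degree $d_{12}'=|j+m|/(m+n)$, even when $j\equiv-m$ and odd when $j\equiv n$, giving the stated parities and $O(t^{d_{12}'})$. Assembling these via Lemma 1.1 of \cite{EW00} gives the lemma. The step I expect to be the main obstacle is (b): keeping the weights and their signs consistent through the two layers of tensor product, correctly pairing each irreducible real $2$-plane with the right $1$-form combination, and handling the $1/t$ relating the geometric coframe $se_3-re_3'$ to the model coordinate form $dx$; once that bookkeeping is nailed down the conclusion follows mechanically.
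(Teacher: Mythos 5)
Your proposal is correct and follows essentially the same route as the paper: both invoke the Eschenburg--Wang criterion (Lemma 1.1 of \cite{EW00}), identify the $K_{m,n}$-module in which each coefficient function lives, compute the minimal degree of an equivariant homogeneous polynomial from $V=\R^2_{2(m+n)}$ into that module, and read off the parity and vanishing order. Your version does a slightly more systematic decomposition of $W^*\otimes\mathfrak h$ into irreducibles and uses $\R^2_a\otimes\R^2_b\cong\R^2_{a+b}\oplus\R^2_{|a-b|}$, whereas the paper computes the relevant weights directly term by term; the content is the same. Your handling of $A_{rs}$ via the relation $se_3-re_3'=\tfrac1{(m+n)t}\,dx$ and the degree-one polynomial $\phi_i$ matches the paper exactly, and your parity bookkeeping for $d_{12}=|j-n|/(m+n)$ and $d_{12}'=|j+m|/(m+n)$ in the two residue classes is correct.

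One small imprecision: your claim that ``only four summands of $W^*\otimes\mathfrak h$ admit an equivariant map from $V$, which is exactly why an invariant connection has the displayed four-term form'' is neither needed nor quite true. For instance, when $m=n=1$ and $j\equiv 1\pmod 4$, the summand $\R^2_{2(j+n)}\subset\R^2_{2n}\otimes\R^2_{2j}$ also admits a $K_{m,n}$-equivariant polynomial from $V$ (since $m+n=2$ divides $j+n$), yet the four-term ansatz has no component there. The four-term form is imposed by the hypotheses of the lemma (it comes from $\G$-invariance via Wang's theorem applied to the principal isotropy, established earlier in the paper), not from $K_{m,n}$-equivariance at the singular orbit alone. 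Since the lemma takes the four-term form as given, your argument goes through regardless; only the parenthetical ``which is exactly why\dots'' explanation should be dropped or rephrased.
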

\begin{proof}
Firstly, note that $(\Ad\circ\lambda_j)(e^{i\theta})E_3=E_3$. Then, considering the subspace $V\subset W$, we can define an equivariant polynomial $V\to \Hom(V,\su(2))$ as
\[\phi_{\ell}(x)(z_1)=\langle\ell x,z_1\rangle E_3 \text{ for } \ell\in\{1,i\}.\]
These are equivariant homogeneous degree 1 polynomials which correspond upon evaluation at $v_0=1$ to the 1-forms $dt$ for $\ell=1$ and $dx$ for $\ell=i$. Then, the coefficient of $dx=(m+n)t(se_3-re_3')$ must be odd and be of order $O(t)$. Hence, the coefficient $A_{rs}$ of $se_3-re_3'$ must be even and of order $O(t^2)$.

On the other hand, for the trivial part of $\mathfrak{p}$ which is generated by $me_3+ne_3'$, we take the constant polynomial which maps to the identity in $\Hom(V,\su(2))$. Then since the action of $K_{m,n}$ is trivial, the equivariance condition is satisfied. Thus the coefficient $A_{mn}$ of $me_3+ne_3'$ must be even.

Consider the bundle $P_j$ with positive $j\equiv n,-m\mod2(m+n)$, then the representation $(\R^2_{2n})^*\otimes\su(2)$ of $K_{m,n}$ has weight $2(j-n)$. The space of homogeneous polynomials of degree $d$ on $V$ is isomorphic to $\Sym^dV$, and $K_{m,n}$ acts on $\Sym^dV$ with weight $2d(m+n)$. Then the smallest $d$ such that the corresponding degree $d$ polynomial is equivariant is $d=\frac{j-n}{m+n}$. Similarly, the smallest $d$ such that the corresponding degree $d$ polynomial with values in $(\R^2_{-2m})^*\otimes\su(2)$ is equivariant is $d=\frac{j+m}{m+n}$.

In conclusion, if $j\geq0$, then on $P_j$, we have that $A_{12}$ is $O(t^{\frac{j-n}{m+n}})$ and has the same parity as $\frac{j-n}{m+n}$, while $A_{12}'$ is $O(t^{\frac{j+m}{m+n}})$ and has the same parity as $\frac{j+m}{m+n}$.

Finally, if $j<0$, a similar calculation shows that on $P_j$, $A_{12}$ is $O(t^{\frac{-j+n}{m+n}})$ and the same parity as $\frac{-j+n}{m+n}$, while $A_{12}'$ is $O(t^{\frac{-j-m}{m+n}})$ and the same parity as $\frac{-j-m}{m+n}$.
\end{proof}

\section{Additional Dynamical Systems Theory}\label{sec:dynsys}

The material in this section can be found in \cite{T12}; it is designed to give the necessary background theory in dynamical systems used in the proof of Theorem \ref{thm:main}. More precisely, we set out the conditions for which a perturbation of an autonomous system by a non-autonomous term still yields a stable manifold.

Consider the non-autonomous system
\begin{equation}\label{eq:nonautogen}\dot{x}(t)=A(t)x(t)+g(t).\end{equation}
We can produce an ansatz for the solution of such an equation, known as a Variation of Constants, which takes the form
\[x(t) = \Pi(t,t_0)c(t), \hspace{1cm} c(t_0)=x(t_0)=x_0.\]
Here, $\Pi$ is called the principal matrix solution of the system
\[\begin{cases}\dot{\Pi}(t,t_0)=A(t)\Pi(t,t_0) \\ \Pi(t_0,t_0)=\text{Id}.\end{cases}\]
Substituting this ansatz solution yields the integral equation
\[c(t)=x_0+\int_{t_0}^t\Pi(t_0,s)g(s)ds\]
and hence we have the following theorem.

\begin{theorem}\label{thm:varofconst}
The solution of \eqref{eq:nonautogen} corresponding to the initial condition $x(t_0)=x_0$ is 
\[x(t)=\Pi(t,t_0)x_0+\int_{t_0}^t\Pi(t,s)g(s)ds.\]
\end{theorem}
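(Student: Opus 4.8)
The plan is to confirm the Variation of Constants ansatz set up just above the statement: either push that computation through, or — more economically — define $x(t)$ to be the asserted right-hand side and verify directly that it solves \eqref{eq:nonautogen} with $x(t_0)=x_0$. Either route needs only two structural facts about the principal matrix solution $\Pi$ beyond elementary calculus, so I would isolate those first.

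\emph{Properties of $\Pi$.} I would first show that $\Pi(t_1,t_0)$ is invertible with inverse $\Pi(t_0,t_1)$, and more generally that $\Pi(t,t_1)\Pi(t_1,t_0)=\Pi(t,t_0)$ for all $t,t_0,t_1$. Both follow from uniqueness of solutions to the linear matrix initial value problem $\dot Y(t)=A(t)Y(t)$, which itself follows from Gr\"onwall's inequality (valid since $A$ is continuous): the matrix-valued functions $t\mapsto \Pi(t,t_1)\Pi(t_1,t_0)$ and $t\mapsto \Pi(t,t_0)$ both satisfy this ODE and agree at $t=t_1$, hence agree everywhere; evaluating the resulting identity at $t=t_0$ gives $\Pi(t_0,t_1)\Pi(t_1,t_0)=\mathrm{Id}$. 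The same Gr\"onwall argument applied to the difference of two solutions of \eqref{eq:nonautogen} shows that ``the solution'' is well defined.

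\emph{Verification.} With these in hand, set $x(t)=\Pi(t,t_0)x_0+\int_{t_0}^t\Pi(t,s)g(s)\,ds$. Then $x(t_0)=\Pi(t_0,t_0)x_0=x_0$. Differentiating, the first term gives $\dot\Pi(t,t_0)x_0=A(t)\Pi(t,t_0)x_0$, while the Leibniz rule applied to the integral (justified by continuity of $(t,s)\mapsto\Pi(t,s)g(s)$ and of its $t$-derivative) gives $\Pi(t,t)g(t)+\int_{t_0}^t \partial_t\Pi(t,s)g(s)\,ds=g(t)+A(t)\int_{t_0}^t\Pi(t,s)g(s)\,ds$. Summing yields $\dot x(t)=A(t)x(t)+g(t)$, so by uniqueness $x$ is the solution. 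Alternatively, following the text's ansatz literally: substitute $x=\Pi(t,t_0)c(t)$, use $\dot\Pi(t,t_0)=A(t)\Pi(t,t_0)$ to cancel the $A(t)x$ terms, solve $\dot c(t)=\Pi(t,t_0)^{-1}g(t)=\Pi(t_0,t)g(t)$ by integrating from $t_0$ with $c(t_0)=x_0$, and then use the cocycle identity $\Pi(t,t_0)\Pi(t_0,s)=\Pi(t,s)$ to rewrite $\Pi(t,t_0)\int_{t_0}^t\Pi(t_0,s)g(s)\,ds$ in the stated form. There is no genuine obstacle here; the only points requiring care are the invertibility and cocycle identity for $\Pi$ and the justification of differentiation under the integral sign, both routine under the standing continuity hypotheses on $A$ and $g$.
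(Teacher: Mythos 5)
Your proposal is correct and essentially the same as the paper's (very brief) derivation: the paper substitutes the ansatz $x(t)=\Pi(t,t_0)c(t)$, solves for $c$, and implicitly uses the cocycle identity $\Pi(t,t_0)\Pi(t_0,s)=\Pi(t,s)$ to put the answer in the stated form — precisely your ``alternatively'' route. Your primary route (define the right-hand side and differentiate) is an equivalent forward verification of the same identity, and you correctly identify the two supporting facts the paper leaves tacit: the cocycle/invertibility property of $\Pi$ (from uniqueness via Gr\"onwall) and the Leibniz rule for differentiating under the integral.
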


Now we consider the autonomous system
\[\dot{x} = A(x-x_0) +g(x)\]
where $g(x)$ is the non-linear part. Assume $x_0=0$ is a hyperbolic fixed point. Then we can apply Theorem \ref{thm:varofconst} to rewrite the equation as
\[x(t) = e^{tA}x(0)+\int_0^te^{(t-r)A}g(x(r))dr.\]

Denote by $x_{\pm}$ the projection $P^{\pm}$ of $x(0)$ onto the stable and unstable subspaces. We want to understand the condition on $x(0)=x_++x_-$ for which $x(t)$ remains bounded for all time. Let $P(t)$ be the function defined by $P^+$ for $t>0$ and $-P^-$ for $t\leq0$. Then Teschl writes $x_-$ as a function of $x_+$ and hence rewrites $x(t)$ as $x(t)=K(x)(t)$ where
\[K(x)(t)=e^{tA}x_++\int_0^{\infty}e^{(t-r)A}P(t-r)g(x(r))dr.\]

We now specialise to the equation of interest \eqref{eq:defF}, namely
\[\frac{dz}{d\tau}=F(z)+G(z,\tau).\]
Let $t_0$ be the initial time and temporarily set $G=0$. Then we can write $z(t)=K(z)(t)$ where
\[K(z)(t)=e^{(t-t_0)DF(0)}z_++\int_{t_0}^{\infty}e^{(t-r)DF(0)}P(t-r)(F(z(r))-DF(0)z(r))dr.\]
With this setup, we can apply Theorem 9.3 of \cite{T12}. The result is that for all $t_0$, and all $z_+$ in a small enough neighbourhood of 0 within the stable eigenspace, $K$ is a contraction in the space of bounded functions $C^k_0([t_0,\infty),\R^4)$  and has a fixed point $\psi(t,z_+)$ by the contraction principle. Thus the stable manifold is given as $z_++P^-\psi(t_0,z_+)$.

We now adapt the proof of Theorem 9.3 of \cite{T12} in the case where $G$ is non-zero. Then the function $K(z)$ becomes 
\[K_G(z)(t)=e^{(t-t_0)DF(0)}z_++\int_{t_0}^{\infty}e^{(t-r)DF(0)}P(t-r)(F(z(r))-DF(0)z(r)+G(z(r),r))dr.\]

By \eqref{eq:Glim}, for $t_0$ large enough, we have 
\begin{equation}\label{eq:Gbound}\vert G(z(r),r)\vert\leq ce^{-t_0}\end{equation}
can be made arbitrarily small and $G$ is locally Lipschitz. Here, $c$ is independent of $z$ near $z_+$ by \eqref{eq:DGlim}.  Then, 
\begin{align*}
    \Vert K_G(x)-K_G(y)\Vert &= \sup_{t\geq0}\left\vert \int_{t_0}^{\infty}e^{(t-r)DF(0)}P(t-r)(F(x)-F(y)-DF(0)(x-y)+(G(x,r)-G(y,r)))dr\right\vert\\
    &\leq C\sup_{t\geq0}\int_{t_0}^{\infty}e^{-\alpha_0\vert t-r\vert}(\vert (F-DF(0))(x-y)\vert +\vert G(x,r)-G(y,r)\vert)dr
\end{align*}
where $0<\alpha_0<\min\{\vert \textup{Re}(\alpha)\vert:\alpha\in\sigma(DF(0))\}$, where $\sigma(DF(0))$ is the set of eigenvalues of $DF(0)$. Then, since the Jacobian of $F-DF(0)$ vanishes at $0$, we have
$\vert (F-DF(0))(x-y)\vert \leq \epsilon\vert x-y\vert$; together with \eqref{eq:Gbound}, we see that $K_G$ is a contraction for a large enough choice of $t_0$. By the contraction principle, we again have a fixed point which defines the stable manifold at the fixed point.

Hence, a small perturbation of an autonomous system by a non-autonomous term does not affect the existence of a stable manifold at the fixed point.

\bibliographystyle{plain}
\bibliography{refs.bib}

\begin{thebibliography}{10}

\bibitem{A80}
H.~Amann.
\newblock {\em Ordinary Differential Equations: An Introduction to Nonlinear
  Analysis}.
\newblock de Gruyter Studies in Mathematics. Walter de Gruyter \& Co., Berlin,
  1980.

\bibitem{B13}
O.~A. Bogoyavlenskaya.
\newblock On a new family of complete {R}iemannian metrics on {$S^3\times\R^4$}
  with holonomy group {$G_2$}.
\newblock {\em Sibirsk. Mat. Zh.}, 54(3):551--562, 2013.

\bibitem{B02}
A.~Brandhuber.
\newblock {$G_2$} holonomy spaces from invariant three-forms.
\newblock {\em Nuclear Phys. B}, 629(1-3):393--416, 2002.

\bibitem{BGGG}
A.~Brandhuber, J.~Gomis, S.~Gubser, and S.~Gukov.
\newblock Gauge theory at large {$N$} and new {$G_2$} holonomy metrics.
\newblock {\em Nuclear Phys. B}, 611(1-3):179--204, 2001.

\bibitem{BS89}
R.~Bryant and S.~Salamon.
\newblock On the construction of some complete metrics with exceptional
  holonomy.
\newblock {\em Duke Math. J.}, 58(3):829--850, 1989.

\bibitem{C13}
A.~Clarke.
\newblock {Instantons on the exceptional holonomy manifolds of Bryant and
  Salamon}.
\newblock {\em Journal of Geometry and Physics}, 82, 08 2013.

\bibitem{CGLP}
M.~Cveti\v{c}, G.~W. Gibbons, H.~L\"{u}, and C.~N. Pope.
\newblock M-theory conifolds.
\newblock {\em Phys. Rev. Lett.}, 88(12):no. 121602, 4, 2002.

\bibitem{DS11}
S.~Donaldson and E.~Segal.
\newblock Gauge theory in higher dimensions, {II}.
\newblock {\em Surv. Differ. Geom.}, 16, 2011.

\bibitem{DT98}
S.~K. Donaldson and R.~P. Thomas.
\newblock Gauge theory in higher dimensions.
\newblock {\em The Geometric Universe}, pages 31--47, 1998.

\bibitem{EW00}
J.~Eschenburg and M.~Wang.
\newblock The initial value problem for cohomogeneity one {E}instein metrics.
\newblock {\em The Journal of Geometric Analysis}, 10(1):109--137, Mar 2000.

\bibitem{FHN18}
L.~{Foscolo}, M.~{Haskins}, and J.~{Nordstr{\"o}m}.
\newblock {Infinitely many new families of complete cohomogeneity one
  $G_2$-manifolds: $G_2$ analogues of the Taub-NUT and Eguchi-Hanson spaces}.
\newblock {\em J. Eur. Math. Soc.}, 23(7):2153--2220, 2021.

\bibitem{FI01}
T.~Friedrich and S.~Ivanov.
\newblock Parallel spinors and connections with skew-symmetric torsion in
  string theory.
\newblock {\em Asian Journal of Mathematics}, 6:303--336, 03 2001.

\bibitem{KL19}
S.~Karigiannis and J.~Lotay.
\newblock Deformation theory of {$G_2$} conifolds.
\newblock {\em Communications in Analysis and Geometry}, 2019.

\bibitem{KN63}
S.~Kobayashi and K.~Nomizu.
\newblock {\em Foundations of Differential Geometry}.
\newblock Number v. 1 in Foundations of Differential Geometry. Interscience
  Publishers, 1963.

\bibitem{LO18a}
J.~{Lotay} and G.~{Oliveira}.
\newblock {$G_2$-instantons on noncompact $G_2$-manifolds: results and open
  problems}.
\newblock {\em arXiv e-prints}, Dec 2018.
\newblock arXiv:1812.11867.

\bibitem{LO18b}
J.~Lotay and G.~Oliveira.
\newblock {$\SU(2)^2$-invariant $G_2$-instantons}.
\newblock {\em Mathematische Annalen}, 371(1):961--1011, Jun 2018.

\bibitem{MS12}
T.~Madsen and S.~Salamon.
\newblock Half-flat structures on ${S}^3\times {S}^3$.
\newblock {\em Annals of Global Analysis and Geometry}, 44:369--390, 2012.

\bibitem{MP12}
J.J. Palis, A.K. Manning, and W.~de~Melo.
\newblock {\em Geometric Theory of Dynamical Systems: An Introduction}.
\newblock Springer New York, 2012.

\bibitem{T12}
G.~Teschl.
\newblock {\em Ordinary Differential Equations and Dynamical Systems}.
\newblock Graduate studies in mathematics. American Mathematical Society, 2012.

\bibitem{T09}
D.~The.
\newblock {Invariant Yang-Mills Connections over Non-Reductive
  Pseudo-Riemannian Homogeneous Spaces}.
\newblock {\em Transactions of the American Mathematical Society},
  361(7):3879--3914, 2009.

\bibitem{W58}
H.~Wang.
\newblock On invariant connections over a principal fibre bundle.
\newblock {\em Nagoya Mathematical Journal}, 13:1--19, 1958.

\end{thebibliography}

\end{document}